\DeclareMathAlphabet{\curly}{OT1}{rsfs}{n}{it}
\DeclareMathOperator{\Dim}{dim}
\DeclareMathOperator{\Hom}{Hom}
\DeclareMathOperator{\rank}{rank}
\DeclareMathOperator{\im}{Im}
\DeclareMathOperator{\Fix}{Fix}
\DeclareMathOperator{\tr}{tr}
\DeclareMathOperator{\inc}{in}
\DeclareMathOperator{\pr}{pr}
\DeclareMathOperator{\coker}{Coker}
\DeclareMathOperator{\Sm}{Sm}
\DeclareMathOperator{\Sq}{Sq}
\DeclareMathOperator{\Bl}{Bl}
\DeclareMathOperator{\Tors}{Tors}
\DeclareMathOperator{\interior}{Int}
\DeclareMathOperator{\defi}{\mathfrak D}
\DeclareMathOperator{\Conj}{c}
\DeclareMathOperator{\Ker}{Ker}
\def\dim{\mbox{dim}}
\def\ra{\rightarrow}
\def\CC{\mathbb{C}}
\def\PP{\mathbb{P}}
\def\QQ{\mathbb{Q}}
\def\ZZ{\mathbb{Z}}
\def\NN{\mathbb{N}}  
\def\RR{\mathbb{R}}
\def\FF{\mathbb{F}}
\def\s-{\setminus}
\def\HH{\mathbb{H}}
\def\ra{\rightarrow}
\newtheorem{thm}{Theorem}[section]
\newtheorem{prop}[thm]{Proposition}
\newtheorem{lemma}[thm]{Lemma}
\newtheorem{defn}[thm]{Definition}
\newtheorem{cor}[thm]{Corollary}
\newtheorem{lem}[thm]{Lemma}
\newtheorem{rmk}[thm]{Remark}
\numberwithin{equation}{section}
\begin{document}

\title[Deficiency]
{On the Smith-Thom deficiency of Hilbert squares}

\author[Kharlamov]{Viatcheslav Kharlamov}

\address{
        IRMA UMR 7501, Strasbourg University, 7 rue Ren\'e-Descartes, 
        67084 Strasbourg, Cedex,  FRANCE}

\email{kharlam@math.unistra.fr}

\author[R\u asdeaconu]{Rare\c s R\u asdeaconu}

\address{        
       Department of Mathematics, Vanderbilt University, 
        Nashville, TN, 37240, USA}
  \address{
	Institute of Mathematics of the Romanian Academy,  Bucharest 014700,  Romania}

\email{rares.rasdeaconu@vanderbilt.edu}

\keywords{real algebraic varieties, Smith exact sequence, Smith-Thom maximality,  
Hilbert scheme of points}

\subjclass[2020]{Primary: 14P25; Secondary: 14C05, 14J99}

\begin{abstract}
We give an expression for the Smith-Thom deficiency of the Hilbert square $X^{[2]}$ 
of a smooth real algebraic variety $X$ in terms of the rank of a suitable Mayer-Vietoris 
mapping in several situations. As a consequence, we establish a simple necessary 
and sufficient condition for the maximality of $X^{[2]}$ in the case of projective complete 
intersections, and show that with a few exceptions no real nonsingular projective 
complete intersection of even dimension has maximal Hilbert square. We also provide new 
examples of smooth real algebraic varieties with maximal Hilbert square.
\end{abstract}

\maketitle

\thispagestyle{empty}

\vskip-4mm
\setlength\epigraphwidth{.56\textwidth}
\epigraph{
\dots si beau soit un vaste paysage, les horizons lointains sont toujours un peu vagues \dots}
{Henri Poincar\'e, Savants et \'ecrivains.}

\setcounter{tocdepth}{2}
\tableofcontents


\section{Introduction}
\label{intro}


Recall that on a real algebraic variety $X$ the following inequality holds
$$
\dim\, H_*(X(\RR); \FF_2)\le \dim\, H_*(X(\CC); \FF_2).
$$
It is traditionally called {\it the Smith inequality} and the difference
$$
\defi(X)= \dim\, H_*(X(\CC); \FF_2)-\dim\, H_*(X(\RR); \FF_2)
$$ 
is called {\it the Smith-Thom deficiency of} $X.$ A real algebraic variety $X$ is said 
to be {\it maximal}, or an {\it $M$-variety}, if its Smith-Thom deficiency vanishes.

\smallskip

As it was observed in our previous paper \cite{loss-surfaces}, many  deformation classes 
of algebraic surfaces do not contain any real representative $X$ whose Hilbert square 
$X^{[2]}$ is maximal. Here, we refine results obtained in \cite{loss-surfaces} and generalize 
them to higher dimensions. The main results are as follows.

\begin{thm}
\label{converse}
Let $X$ be a real nonsingular projective variety of dimension $n\ge 2$.
If the Hilbert square $X^{[2]}$ is maximal, then $X$ is maximal.
\end{thm}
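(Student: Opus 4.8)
I would organise the proof around the two natural layers of the Hilbert square. Write $W:=\Bl_{\Delta_X}(X\times X)$ for the blow-up along the diagonal, with exceptional divisor $E$; the factor-swap involution $\tau$ lifts to a real automorphism of $W$ whose fixed locus is the divisor $E\cong\PP(T_X)$, and $X^{[2]}=W/\tau$. The outer layer is the contraction $W\to X\times X\to X$, handled by the standard arithmetic of the Smith-Thom deficiency; the inner layer is the quotient $X^{[2]}=W/\tau$ along the fixed divisor $E$, and is where the real work lies.

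For the outer layer, recall two standard identities. If $V$ is real nonsingular projective and $Z\subset V$ is a smooth real subvariety of complex codimension $c$, the mod-$2$ blow-up formula — valid on both complex and real loci, since $(\Bl_ZV)(\RR)=\Bl_{Z(\RR)}V(\RR)$ — gives $\defi(\Bl_ZV)=\defi(V)+(c-1)\defi(Z)$; and the mod-$2$ K\"unneth formula gives $\defi(V_1\times V_2)=\dim H_*(V_1(\RR);\FF_2)\,\defi(V_2)+\dim H_*(V_2(\RR);\FF_2)\,\defi(V_1)+\defi(V_1)\,\defi(V_2)$. Since $\Delta_X\cong X$ has codimension $n$ in $X\times X$, combining these yields
\[
\defi(W)=\defi(X)\bigl(2\dim H_*(X(\RR);\FF_2)+\defi(X)+n-1\bigr).
\]
As $n\ge2$, the parenthesis is positive whenever $\defi(X)>0$, and $X(\RR)=\varnothing$ forces $W(\RR)=\varnothing$; hence $W$ is maximal if and only if $X$ is. (Here $n\ge2$ is indispensable: for $n=1$ there is no blow-up and the statement fails.) It thus suffices to prove that $X^{[2]}$ maximal $\Rightarrow$ $W$ maximal.

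For the inner layer I would use the commuting involutions of $W(\CC)$: the swap $\tau$, with $\Fix(\tau)=E(\CC)\cong\PP(T_{X(\CC)})$ a smooth divisor; the conjugation $\wt c$, with $\Fix(\wt c)=W(\RR)$; and $\wt c\tau$, with fixed locus the real projective blow-up of $X(\CC)$ along $X(\RR)$. Over $\CC$, $X^{[2]}(\CC)=W(\CC)/\tau$ is a tubular neighbourhood of $\bar E(\CC)\cong\PP(T_{X(\CC)})$ glued, along the unit circle bundle $S(N)$ of the normal bundle $N=\OO_{\bar E}(-2)$, to the configuration space $X^{[2]}(\CC)\setminus\bar E(\CC)$ of pairs of distinct points of $X(\CC)$; since $c_1(N)$ is even, $\dim H_*(\bar E(\CC);\FF_2)=nM$ and $\dim H_*(S(N);\FF_2)=2nM$, where $M:=\dim H_*(X(\CC);\FF_2)$. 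Over $\RR$,
\[
X^{[2]}(\RR)=X(\RR)^{[2]}\ \cup_{\PP(T_{X(\RR)})}\ \widehat Q,
\]
the union of two compact manifolds along the common boundary hypersurface $\bar E(\RR)=\PP(T_{X(\RR)})$, where $X(\RR)^{[2]}=\Bl_{\Delta}(X(\RR)^2)/\tau$ and $\widehat Q$ is obtained from $(X(\CC)\setminus X(\RR))/c$ by real blow-up along $X(\RR)$. Running the Mayer-Vietoris sequences of these decompositions — equivalently, the Smith exact sequences of $\tau$ on $W(\CC)$ and $W(\RR)$ together with the conjugation Smith sequences — expresses $\defi(X^{[2]})$ in terms of $\defi(X)$, of $nM$ versus $nm$ where $m:=\dim H_*(X(\RR);\FF_2)$ (so that $\defi(\PP(T_X))=n\,\defi(X)$ by mod-$2$ Leray-Hirsch), and of the ranks of the connecting and Mayer-Vietoris maps. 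When $X(\RR)=\varnothing$ the real decomposition degenerates to $X^{[2]}(\RR)=(X(\CC)\setminus X(\RR))/c$, and a direct Smith-theoretic estimate (again using $n\ge2$) shows $\dim H_*(X^{[2]}(\RR);\FF_2)<\dim H_*(X^{[2]}(\CC);\FF_2)$, so $X^{[2]}$ is not maximal.

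The main obstacle is that, over $\FF_2$, the cohomology of the quotients involved ($W(\CC)/\tau$, $(X(\CC)\setminus X(\RR))/c$, symmetric squares) does not split off as a ring of invariants — the transfer argument is unavailable — so the complex--real comparison is not a bare Betti-number count and one must bound the mod-$2$ Mayer-Vietoris ranks. I expect the conclusion to follow from showing that these decompositions produce a formula $\defi(X^{[2]})=n\,\defi(X)+(\text{correction})$ with the correction non-negative; the pieces of the correction should be identifiable — after suitable bookkeeping — as deficiencies of genuine real varieties (hence $\ge0$ by Smith-Thom, e.g. the configuration space and the conjugation quotient) together with differences of Mayer-Vietoris ranks that are non-negative because the relevant real maps are restrictions of their complex counterparts. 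Granting this, $\defi(X^{[2]})\ge n\,\defi(X)$, whence $X^{[2]}$ maximal forces $\defi(X)=0$, i.e. $X$ maximal. The delicate step — and the one I expect to cost the most — is precisely the non-negativity of the correction: showing that the excess of $H_*(\PP(T_X))$ over $\CC$ relative to $\RR$ persists in $X^{[2]}$ and is not absorbed into the configuration space or the conjugation quotient, which is where the fine structure of the Hilbert-Chow contraction, and once more the hypothesis $n\ge2$, must be used.
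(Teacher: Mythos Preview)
Your outer layer is correct and cleanly argued: the blow-up and K\"unneth deficiency identities do give
\[
\defi(W)=\defi(X)\bigl(2\dim H_*(X(\RR);\FF_2)+\defi(X)+n-1\bigr),
\]
so $W$ is maximal iff $X$ is. The empty-real-locus case you sketch is also essentially the paper's Proposition~\ref{nonemptyness}. But the inner layer is not a proof; it is a wish. You say you ``expect'' the Mayer--Vietoris/Smith bookkeeping to yield $\defi(X^{[2]})=n\,\defi(X)+(\text{correction})$ with non-negative correction, and you yourself flag the non-negativity as ``the delicate step'' that will ``cost the most'' --- but you provide no mechanism for it. The obstacle is genuine: the correction is a signed combination of ranks of boundary maps in two \emph{unrelated} Mayer--Vietoris sequences (one for a decomposition of $X^{[2]}(\CC)$, one for $X^{[2]}(\RR)$), and there is no restriction map between them --- the real pieces $X(\RR)^{[2]}$ and $\widehat Q$ are not the real loci of the complex pieces of your complex decomposition. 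So the slogan ``the real maps are restrictions of their complex counterparts'' does not apply, and no general principle forces the sign you need. Note also a structural incoherence: once you aim directly for $\defi(X^{[2]})\ge n\,\defi(X)$, the passage through $W$ becomes irrelevant, so your two layers are not actually linked.

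The paper's argument is entirely different and avoids all of this. After establishing $X(\RR)\neq\varnothing$, one fixes $p\in X(\RR)$ and extends $x\mapsto\{p,x\}$ to an equivariant morphism $\phi\colon\Bl_pX\to X^{[2]}$; the induced $\phi^*$ on $\FF_2$-cohomology is surjective. One then uses the equivariant-cohomology criterion: $Y$ is maximal iff the forgetful map $H^*_G(Y)\to H^*(Y)$ is surjective. Surjectivity for $X^{[2]}$ together with surjectivity of $\phi^*$ forces surjectivity for $\Bl_pX$, hence $\Bl_pX$ is maximal, hence $X$ is. This replaces your unproved rank inequality by a two-line diagram chase.
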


\begin{thm}
\label{defect-odd}
The Smith-Thom deficiency of the Hilbert square $X^{[2]}$  of a maximal real nonsingular 
projective complete intersection  $X\subset \PP^{N}$ of dimension $n\ge 2$ is given by
$$
\defi(X^{[2]})=4\left(\sum_{l=1}^{[\frac{n}{2}]}\sum_{i=0}^{l-1}\beta_i(X(\RR))-
d(n)\right),
$$
where $d(n)$ is equal to $\frac{n(n+2)}{8}$ for $n$ even and $\frac{n^2-1}{8}$ 
for $n$ odd.
\end{thm}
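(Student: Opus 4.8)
The plan is to deduce Theorem~\ref{defect-odd} from the general Mayer--Vietoris expression for $\defi(X^{[2]})$ established in the preceding sections, by computing the relevant rank explicitly for maximal projective complete intersections. Recall the geometry underlying that expression: writing $Y=X\times X$, $\tilde Y=\Bl_\Delta Y$, and letting $\sigma$ be the lift of the swap involution, one has $X^{[2]}=\tilde Y/\sigma$ with fixed locus the exceptional divisor $E=\PP(T_X)$; on real points this yields a decomposition $X^{[2]}(\RR)=M_1\cup_W M_2$, where $W=\PP_\RR(T X(\RR))$ is an $\RR\PP^{n-1}$-bundle over $X(\RR)$, $M_1$ is the (manifold-with-boundary) parametrizing length-$2$ subschemes supported on $X(\RR)$, and $M_2$ is the closure of the locus of conjugate pairs. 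The deficiency is governed by the ranks of the maps $H_*(W;\FF_2)\to H_*(M_1;\FF_2)\oplus H_*(M_2;\FF_2)$ together with the analogous complex data, and, for $X$ maximal, the structural result reduces this to an identity of the shape $\defi(X^{[2]})=4\bigl(A(X)-\rho\bigr)$, with $A(X)=\sum_{l=1}^{[n/2]}\sum_{i=0}^{l-1}\beta_i(X(\RR))$ coming from pure dimension counts (the blow-up formula, Leray--Hirsch for $W$ and for $E(\CC)=\PP_\CC(T_X)$, and maximality of $X$) and $\rho$ the rank of a single Mayer--Vietoris map depending only on the topology of $X$ and of $T_X$. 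Thus Theorem~\ref{defect-odd} is equivalent to the assertion that $\rho=d(n)=\sum_{l=1}^{[n/2]}l=\binom{[n/2]+1}{2}$ for every maximal real nonsingular projective complete intersection of dimension $n$.

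To compute $\rho$ I would first record that for a complete intersection $X\subset\PP^{N}$ of dimension $n$ the group $H_*(X(\CC);\ZZ)$ is torsion-free and, away from the middle degree $n$, concentrated in one copy of $\FF_2$ in each even degree; hence $\dim H_*(X^{[2]}(\CC);\FF_2)$ equals the rational Betti count $\binom{b+1}{2}+(n-1)b$, where $b=\dim H_*(X(\CC);\FF_2)$. Maximality of $X$ forces $\dim H_*(X(\RR);\FF_2)=b$ and, via the Smith exact sequence together with Poincar\'e duality on $X(\RR)$, determines the comparison homomorphism $j^{*}\colon H^{*}(X(\CC);\FF_2)\to H^{*}(X(\RR);\FF_2)$ in all degrees $\ne n$, where it is the obvious isomorphism carrying powers of the hyperplane class to their real counterparts. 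The two projective bundles are then handled by Leray--Hirsch with the classes $h_\RR=w_1(\OO_W(1))\in H^{1}(W;\FF_2)$ and $h_\CC=c_1(\OO_E(1))\bmod 2\in H^{2}(E(\CC);\FF_2)$, so that the Mayer--Vietoris map $\mu$ becomes, level by level, a combination of $j^{*}$, of multiplication by powers of $h_\RR$, and of the Wu-type relation $h_\RR^{\,n}=\sum_{k\ge 1}w_k(T X(\RR))\,h_\RR^{\,n-k}$ together with its complex analogue involving the Chern classes of $T_X$. Organizing the resulting matrix by the index $j=1,\dots,n-1$ of the summand $H^{*-2j}(X)$ in the blow-up formula, and pairing $j$ with $n-1-j$ through Poincar\'e duality of $X^{[2]}$, one obtains $[n/2]$ independent blocks; the $l$-th block is a square matrix whose size on the real side is governed by $\sum_{i<l}\beta_i(X(\RR))$ and on the complex side by $\sum_{i<l}\beta_i(X(\CC))$ corrected by the hyperplane-class relations, and a direct count shows that its rank is exactly $l$. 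Summing over $l$ gives $\rho=\sum_{l=1}^{[n/2]}l=d(n)$, and substituting into the structural identity yields the stated formula; the overall factor $4$ is the product of the universal evenness of the Smith--Thom deficiency and the doubling induced by the two pieces $M_1$, $M_2$.

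The main obstacle is the $l$-th block for the values of $l$ that involve the middle-dimensional cohomology $H^{n}(X(\CC);\FF_2)$, that is, the interaction of $\mu$ with the large group $H^{n}$ and with the classes $h_\RR^{\,n-1},\dots$ sitting just below the Wu relation. Maximality of $X$ does not by itself pin down $j^{*}$ in degree $n$, so the rank cannot be read off from Betti numbers alone; instead I would argue that the middle-degree contribution to $\rho$ is forced to be the expected one by a combination of Poincar\'e--Lefschetz duality for the manifolds-with-boundary $M_1$, $M_2$ (which makes the relevant block self-dual, hence of even corank), the $\sigma$-equivariant structure on $H^{*}(\tilde Y;\FF_2)$, and the multiplicativity $h^{\,n}\equiv(\text{Chern/Stiefel--Whitney classes of }T_X)\cdot(\text{lower powers of }h)$, which ties the potentially new middle classes to classes already accounted for in lower degrees. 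Once these duality constraints are in place the remainder is bookkeeping, but this is the step where the specific geometry of complete intersections — the simple cohomology ring, ampleness of the hyperplane class, surjectivity of restriction to real linear sections — is genuinely used. A secondary point to dispatch first is the torsion-freeness of $H_*(X^{[2]}(\CC);\ZZ)$, equivalently that the $\FF_2$-Betti numbers of the complex Hilbert square coincide with the rational ones, which follows from that of $X$ by analyzing the branched double cover $\tilde Y(\CC)\to X^{[2]}(\CC)$.
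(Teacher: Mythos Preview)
Your general framework --- the cut-and-paste description $X^{[2]}(\RR)=M_1\cup_W M_2$ and the Mayer--Vietoris sequence governing the deficiency --- matches the paper's. But the reduction you invoke is not what the preceding sections establish, and your proposed endgame has a genuine gap. There is no ``structural identity'' of the shape $\defi(X^{[2]})=4(A(X)-\rho)$ with $\rho$ a single Mayer--Vietoris rank; what Proposition~\ref{mu-criterium} gives is $\defi(X^{[2]})=2\bigl(\rank\mu_*-\tfrac{n}{2}\beta_*-\tfrac12\beta_{\rm odd}\bigr)$, and in the actual computation the constant $d(n)$ comes from the Betti numbers $\beta_k(\HH_0)$ (Lemma~\ref{dimensions-H0}), not from a rank. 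More seriously, the ``main obstacle'' you identify --- the middle degree $H^n(X;\FF_2)$ --- never arises. By Poincar\'e duality on the closed $2n$-manifold $X^{[2]}(\RR)$ one only needs $\beta_{\rm even}(X^{[2]}(\RR))$ (for $n$ odd) or $\beta_{\rm odd}(X^{[2]}(\RR))$ (for $n$ even), hence only $\rank\mu_k$ for $k\le n-1$; your attempt to compute all of $\rank\mu_*$ at once, and then to tame the middle-degree block by vague self-duality and parity constraints, is both unnecessary and not actually carried out (``a direct count shows that its rank is exactly $l$'' is precisely the assertion requiring proof).

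The step you are missing is Proposition~\ref{inclusion-restrictions}: for every $k\le n-1$ one has $\im(\inc_0^k)\subseteq\im(\inc^k)$, whence $\rank\mu_k=\rank\inc^k=\sum_{i\le\lfloor k/2\rfloor}\beta_i(X(\RR))$ by Corollary~\ref{inc-sym}. This inclusion is proved \emph{geometrically}: one builds an explicit basis $\{b_0^{\,j}\theta_\ell\}_{j+2\ell=k}$ of $H^k(\HH_0)$ (Lemma~\ref{basis}), where $\theta_\ell$ is the class of the real Hilbert square $H_\ell^{[2]}(\RR)$ of an iterated hyperplane section $H_\ell\subset X$, and then observes via~(\ref{equal-restrictions}) that each such class restricts into $\im(\inc^*)$. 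The linear independence of these classes rests on the non-vanishing $\upsilon^r\ne 0$ for $r\le\lfloor n/2\rfloor$ (Lemma~\ref{hyperplane-class}) combined with the Steenrod-square formula of Lemma~\ref{restriction-hilb}; this is where the complete-intersection hypothesis and the maximality of $X$ are genuinely used. You allude to ``surjectivity of restriction to real linear sections'' only as a secondary remark, but this circle of ideas --- real hyperplane sections, their Hilbert squares as cycles in $X^{[2]}(\RR)$, and the non-vanishing of powers of the real hyperplane class --- is the heart of the argument, not a side issue.
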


The $\FF_2$-Betti numbers $\beta_i(X(\RR))$ are non-zero for every maximal real 
nonsingular complete intersection $X\subset \PP^N$ of dimension 
$n$ whenever $0\leq i\leq [\frac{n}{2}]$  (see Lemma \ref{hyperplane-class}). Since 
$d(n)=\sum_{l=1}^{[\frac{n}{2}]} l$, we get the following criterium of maximality as 
a consequence of Theorems \ref{converse} and \ref{defect-odd}.

\begin{cor}
\label{odd-coro}
Let $X\subset \PP^{N}$ be a real nonsingular projective complete intersection of 
dimension $n\ge 2.$ Then $X^{[2]}$ is maximal if and only if $X$ is maximal and 
$$
\beta_i(X(\RR))=1\quad\text{for every }\quad 0\leq i\leq \left[\frac{n}{2}\right]-1.
$$
\end{cor}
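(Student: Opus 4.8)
The plan is to read off the corollary directly from Theorems \ref{converse} and \ref{defect-odd}, together with the positivity of the low-degree $\FF_2$-Betti numbers recorded in Lemma \ref{hyperplane-class}. The only computational point worth isolating is that the double sum occurring in Theorem \ref{defect-odd} is, after an interchange of the order of summation, a \emph{positively} weighted sum of the quantities $\beta_i(X(\RR))-1$; its vanishing therefore forces each of these to vanish separately.

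For the ``if'' direction, assume $X$ is maximal and $\beta_i(X(\RR))=1$ for $0\le i\le \left[\frac n2\right]-1$. In the inner sum $\sum_{i=0}^{l-1}\beta_i(X(\RR))$ the index $i$ never exceeds $l-1\le\left[\frac n2\right]-1$, so each such Betti number equals $1$ and the inner sum equals $l$. Summing over $l$ gives $\sum_{l=1}^{[n/2]}l=d(n)$, and hence $\defi(X^{[2]})=0$ by Theorem \ref{defect-odd}; that is, $X^{[2]}$ is maximal.

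For the ``only if'' direction, assume $X^{[2]}$ is maximal. Then $X$ is maximal by Theorem \ref{converse}, so Theorem \ref{defect-odd} applies and gives $\sum_{l=1}^{[n/2]}\sum_{i=0}^{l-1}\beta_i(X(\RR))=d(n)$. I would then swap the summation order: an index $i\in\{0,\dots,\left[\frac n2\right]-1\}$ contributes once for each $l$ with $i+1\le l\le\left[\frac n2\right]$, i.e.\ with multiplicity $\left[\frac n2\right]-i$; likewise $d(n)=\sum_{l=1}^{[n/2]}l=\sum_{i=0}^{[n/2]-1}\bigl(\left[\frac n2\right]-i\bigr)$. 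Subtracting, the identity becomes
\[
\sum_{i=0}^{[n/2]-1}\left(\left[\tfrac n2\right]-i\right)\bigl(\beta_i(X(\RR))-1\bigr)=0 .
\]
By Lemma \ref{hyperplane-class} one has $\beta_i(X(\RR))\ge 1$ for all $0\le i\le\left[\frac n2\right]$ (here the maximality of $X$ is used), while the coefficient $\left[\frac n2\right]-i$ is strictly positive throughout the displayed range; hence every summand is nonnegative, and the sum can vanish only if $\beta_i(X(\RR))=1$ for all $0\le i\le\left[\frac n2\right]-1$, which is the asserted equality.

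There is no real obstacle at this stage: both implications are one-line consequences of Theorems \ref{converse} and \ref{defect-odd} and of Lemma \ref{hyperplane-class}. The substance of the argument lies entirely in those earlier results—above all in the deficiency formula of Theorem \ref{defect-odd}—rather than in the present deduction.
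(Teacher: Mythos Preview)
Your proof is correct and follows exactly the approach indicated in the paper: the paper derives the corollary from Theorems \ref{converse} and \ref{defect-odd} together with Lemma \ref{hyperplane-class} and the identity $d(n)=\sum_{l=1}^{[n/2]}l$, and you have simply spelled out the details of that deduction, including the interchange of summation that makes the nonnegativity argument transparent.
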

In particular, Corollary \ref{odd-coro} yields a  direct proof of the following result due to 
L. Fu \cite[Theorem 7.5]{fu}:
\begin{cor}
\label{complete-intersections}
The Hilbert square of a real cubic threefold is maximal if and only if the threefold 
is maximal.
\end{cor}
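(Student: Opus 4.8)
The plan is to read Corollary \ref{complete-intersections} off from Corollary \ref{odd-coro} in the case $n=3$, reducing the entire content to a single topological fact about real cubic threefolds. A smooth real cubic threefold $X\subset\PP^4$ is a real nonsingular projective complete intersection of dimension $n=3\ge 2$, so Corollary \ref{odd-coro} applies. For $n=3$ one has $[\tfrac n2]-1=0$, so the family of conditions ``$\beta_i(X(\RR))=1$ for $0\le i\le[\tfrac n2]-1$'' collapses to the single requirement $\beta_0(X(\RR))=1$, i.e. that $X(\RR)$ be connected. (Equivalently, via Theorem \ref{defect-odd}: since $d(3)=\tfrac{3^2-1}{8}=1$ and the double sum there reduces to $\beta_0(X(\RR))$, a maximal $X$ satisfies $\defi(X^{[2]})=4(\beta_0(X(\RR))-1)$, which vanishes exactly when $X(\RR)$ is connected.) Hence Corollary \ref{complete-intersections} amounts to the conjunction of Theorem \ref{converse} (which already supplies the implication $X^{[2]}$ maximal $\Rightarrow X$ maximal) with the assertion that every maximal real cubic threefold has connected real locus.

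To establish that assertion I would appeal to the topological classification of smooth real cubic threefolds (due to Krasnov), from which the unique maximal deformation class realizes $X(\RR)\cong\RR P^3\,\#\,5(S^1\times S^2)$, which is connected, so $\beta_0(X(\RR))=1$. A more self-contained alternative: a maximal real cubic threefold contains a real line $\ell$ (its Fano surface of lines is again maximal, hence has real points), and projection away from $\ell$ presents $\Bl_\ell X$ as a conic bundle over $\PP^2$ with quintic discriminant; on real points $\Bl_\ell X(\RR)$ becomes a fibre bundle over $\RR P^2$ with fibre $S^1$ over the region of real conics and empty fibre elsewhere, and, using maximality to pin down the real isotopy type of the discriminant quintic, one checks that this region is connected, whence $X(\RR)$ is connected.

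Combining these, if $X^{[2]}$ is maximal then $X$ is maximal by Theorem \ref{converse}, while if $X$ is a maximal real cubic threefold then $\beta_0(X(\RR))=1$ by the previous step and Corollary \ref{odd-coro} yields that $X^{[2]}$ is maximal; this proves Corollary \ref{complete-intersections}. The only non-formal point is the connectedness of the real locus of a maximal real cubic threefold: Corollary \ref{odd-coro} is precisely what turns Fu's theorem into this single statement, and everything around it is bookkeeping. Of the two routes above, quoting the classification is by far the shortest; the conic-bundle argument is elementary in outline but conceals a genuine input, namely the identification of the real isotopy type of the discriminant quintic forced by maximality.
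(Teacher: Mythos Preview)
Your reduction is exactly what the paper does: the paper's entire ``proof'' is the sentence ``Corollary \ref{odd-coro} yields a direct proof'' placed immediately before the statement, with no further argument. You have correctly unpacked what this means for $n=3$: the criterion collapses to $\beta_0(X(\RR))=1$, equivalently (via Theorem \ref{defect-odd}) $\defi(X^{[2]})=4(\beta_0(X(\RR))-1)$ for maximal $X$. The paper leaves the connectedness of the real locus of a maximal cubic threefold entirely implicit, presumably as a fact the reader is expected to know from Krasnov's topological classification; you make this explicit and name the source, which is more complete than what the paper writes.

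One caution on your second, conic-bundle route: the parenthetical ``its Fano surface of lines is again maximal, hence has real points'' risks circularity. Via the Galkin--Shinder--Voisin relation quoted in the paper, $\defi(X^{[2]})=(n+1)\defi(X)+\defi(F(X))$, so for maximal $X$ one has $\defi(F(X))=\defi(X^{[2]})$; thus ``$F(X)$ maximal'' is \emph{equivalent} to the conclusion you are trying to establish, not an independent input. If you want to run the conic-bundle argument you need a separate reason for the existence of a real line (and, as you yourself note, the identification of the real isotopy type of the discriminant quintic is a genuine input in its own right). Your first route---quote the classification---is the clean one, and is what the paper tacitly relies on.
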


According to Theorem 11.12 in \cite{fu}, no real cubic four-fold has maximal 
Hilbert square. As a consequence of Theorems \ref{converse} and  \ref{defect-odd}, 
we prove the following:

\begin{thm}
\label{highdegree2kfolds}
The Hilbert square $X^{[2]}$ of a real nonsingular projective complete 
intersection $X\subset \PP^N$ of positive even dimension is maximal if and only 
if $X$ is a linear subspace, a maximal quadric, a maximal intersection of two quadrics, 
or a maximal cubic surface in $\PP^3.$
\end{thm}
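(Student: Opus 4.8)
The plan is to invoke Corollary~\ref{odd-coro} and then determine precisely which complete intersections meet its criterion. By that corollary it suffices to classify the maximal real nonsingular projective complete intersections $X\subset\PP^{N}$ of even dimension $n=2k\ge 2$ for which $\beta_i(X(\RR))=1$ whenever $0\le i\le k-1$. For such an $X$ the real locus $X(\RR)$ is a closed connected $n$-manifold (connected because $\beta_0=1$), so Poincar\'e duality upgrades the hypothesis to $\beta_i(X(\RR))=1$ for every $i\ne k$, whence $\dim H_*(X(\RR);\FF_2)=n+\beta_k(X(\RR))$. On the complex side the Lefschetz hyperplane theorem forces all odd Betti numbers of $X(\CC)$ to vanish, and, the integral homology of a smooth complete intersection being torsion-free, $\dim H_*(X(\CC);\FF_2)=\chi(X(\CC))=n+b_n(X(\CC))$. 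Maximality of $X$ is therefore equivalent to the single identity $\beta_k(X(\RR))=b_n(X(\CC))=1+\dim H^{n}_{\mathrm{prim}}(X(\CC))$.

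The forward implication is the substantive part. The crucial input is a Comessatti-type inequality for the real locus of a maximal real variety of even complex dimension, namely $\beta_k(X(\RR))=\dim H^{k}(X(\RR);\FF_2)\le h^{k,k}(X)$; for $k=1$ this is Comessatti's classical bound $b_1(X(\RR))\le h^{1,1}(X)$ already used in \cite{loss-surfaces}, and in general it follows from the Smith exact sequence together with the Hodge decomposition of $H^{n}(X(\CC))$. For a complete intersection one has $h^{k,k}(X)=1+h^{k,k}_{\mathrm{prim}}(X)$ and, by the symmetry $h^{p,q}_{\mathrm{prim}}=h^{q,p}_{\mathrm{prim}}$, $\dim H^{n}_{\mathrm{prim}}(X)=h^{k,k}_{\mathrm{prim}}(X)+2\sum_{p<k}h^{p,n-p}_{\mathrm{prim}}(X)$. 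Combining this with the identity above and with the Comessatti-type inequality gives $\sum_{p<k}h^{p,n-p}_{\mathrm{prim}}(X)=0$: the Hodge structure on $H^{n}_{\mathrm{prim}}(X)$ is of pure type $(k,k)$. It then remains to list the complete intersections whose middle primitive cohomology is of this ``Hodge level zero'' type. Using Hirzebruch's generating functions for the Hodge numbers of complete intersections (equivalently, Griffiths' Jacobian-ring description of the primitive middle cohomology), one checks that, apart from the linear subspaces and the quadrics, the only ones are the $(2,2)$-complete intersections in all dimensions and the cubic surfaces in $\PP^{3}$ --- the cubic occurring only in dimension two, since already $h^{n-1,1}_{\mathrm{prim}}$ of a cubic hypersurface is positive once $n\ge 3$. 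Restricting to positive even dimension, and noting that in each case maximality pins the real model down (in particular the quadric must be the balanced one, of signature $(k+1,k+1)$), yields exactly the four families of the statement.

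For the converse one produces, for each family, a maximal real representative with $\beta_i(X(\RR))=1$ for $0\le i\le k-1$. A linear subspace has $X(\RR)=\RR\PP^{n}$, which is maximal with all $\beta_i$ equal to $1$. The balanced real quadric $Q^{2k}$ of signature $(k+1,k+1)$ has $Q(\RR)\cong(S^{k}\times S^{k})/\{\pm1\}$, the unit sphere bundle of the rank-$(k+1)$ bundle $(k+1)\gamma$ over $\RR\PP^{k}$, whose mod-$2$ Euler class $w_{k+1}\big((k+1)\gamma\big)$ vanishes for dimensional reasons (it equals $a^{k+1}\in H^{k+1}(\RR\PP^{k};\FF_2)=0$); hence the Gysin sequence splits, giving $\dim H_*(Q(\RR);\FF_2)=2(k+1)=\dim H_*(Q(\CC);\FF_2)$ and $\beta_i(Q(\RR))=\beta_i(\RR\PP^{k})=1$ for $i\le k-1$. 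A maximal real $(2,2)$-complete intersection of dimension $2k$ is obtained from a suitable real pencil of quadrics; for $k=1$ it is the degree-$4$ del Pezzo surface, i.e.\ $\RR\PP^{2}$ blown up at five real points, with $X(\RR)$ the connected nonorientable surface $N_{6}$, and the higher-dimensional cases follow along the same lines (or from the classification of real intersections of two quadrics). Finally, a cubic surface obtained by blowing up $\RR\PP^{2}$ at six real points has $X(\RR)=N_{7}$, connected, with $\dim H_*(X(\RR);\FF_2)=9=\dim H_*(X(\CC);\FF_2)$.

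The main obstacle is the forward direction: establishing the Comessatti-type inequality $\beta_k(X(\RR))\le h^{k,k}(X)$ in dimension greater than two, and then carrying out the Hodge-level-zero classification, which amounts to ruling out --- via Hirzebruch's Hodge-number polynomials, or an induction on dimension through hyperplane sections --- every multidegree other than $(1,\dots,1)$, $(2)$, $(2,2)$, and (in dimension two only) $(3)$. The converse is comparatively routine, the single slightly delicate point being the construction, in each even dimension, of a maximal real $(2,2)$-complete intersection with the required low Betti numbers.
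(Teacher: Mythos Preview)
Your overall strategy coincides with the paper's: reduce via Corollary~\ref{odd-coro} to the condition $\beta_i(X(\RR))=1$ for $i\ne k$, deduce $\beta_k(X(\RR))=b_n(X(\CC))$, force the middle Hodge structure to be of level zero, and then classify. The classification step is the same in substance --- the paper simply cites Rapoport \cite{rapoport} for what you propose to extract from Hirzebruch's generating series --- and for the converse the paper appeals to \cite{krasnov,tomasini} for the $(2,2)$ case where your sketch is thinnest.

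There is, however, a genuine gap at the pivot of your forward argument. The ``Comessatti-type inequality'' $\beta_k(X(\RR))\le h^{k,k}(X)$ is \emph{not} a general fact for maximal real varieties of even complex dimension: already for $k=1$, the product of two $M$-elliptic curves is a maximal abelian surface with $\beta_1(X(\RR))=8$ but $h^{1,1}=4$. So the inequality cannot be invoked as an external input, and your appeal to ``Smith exact sequence together with the Hodge decomposition'' does not supply a proof. What the paper does instead is exactly what rescues your line: it applies the Lefschetz fixed-point formula to $\Conj$. Since for a complete intersection $H^{2i}(X;\RR)$ is spanned by $h^i$ with $\Conj^*h^i=(-1)^ih^i$ when $i\ne k$, and since you have already arranged $\beta_i(X(\RR))=1$ for $i\ne k$, comparing $\chi(X(\RR))=\sum_{i\ne k}(-1)^i+(-1)^k\beta_k(X(\RR))$ with $\chi(X(\RR))=\sum_{i\ne k}(-1)^i+\tr\!\big(\Conj^*\vert H^{k,k}\big)$ yields $(-1)^k\beta_k(X(\RR))=\tr\!\big(\Conj^*\vert H^{k,k}\big)$, hence $\beta_k(X(\RR))\le h^{k,k}$ and therefore $b_n=h^{k,k}$. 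In short, the inequality you need is not a standing Comessatti bound but a consequence of Lefschetz trace \emph{under the specific constraints you have already derived}; once you make that substitution, your argument and the paper's coincide.
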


As it was communicated to us by L. Fu, for a cubic hypersurface $X,$ the 
Galkin-Shinder-Voisin diagram (see \cite[section 4]{bfr}) yields the following 
relation between the Smith-Thom deficiencies of $X,$ its 
Hilbert square, and its Fano variety of lines $F(X):$
$$
\defi(X^{[2]})=(n+1)\defi(X)+\defi(F(X)). 
$$
Therefore, Theorem \ref{highdegree2kfolds} has the following consequence:

\begin{cor}
\label{fano-var-cubics}
Let $X$ be a real nonsingular cubic hypersurface of dimension $2n$ with $n\geq 2.$ 
If X is maximal \footnote{By common belief, maximal real cubic hypersurfaces should 
exist in all dimensions. However, up to our knowledge, published proofs cover the 
statement only up to dimension 4.}, then its Fano variety of lines  $F(X)$ is not maximal.
 \end{cor}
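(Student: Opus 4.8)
The plan is to deduce Corollary \ref{fano-var-cubics} from Theorem \ref{highdegree2kfolds} together with the Galkin-Shinder-Voisin relation
$$
\defi(X^{[2]})=(n+1)\defi(X)+\defi(F(X))
$$
recalled just above, applied to a real nonsingular cubic hypersurface $X$ of even dimension $2n$ with $n\ge 2$. First I would observe that, since all three terms in this relation are Smith-Thom deficiencies, they are nonnegative integers; in particular $\defi(F(X))\le \defi(X^{[2]})$, and $\defi(F(X))$ vanishes precisely when both $\defi(X)=0$ and $\defi(X^{[2]})=0$ hold simultaneously (because $n+1\ge 3>0$). So the statement ``$X$ maximal $\Rightarrow$ $F(X)$ not maximal'' is equivalent to: there is no real nonsingular cubic hypersurface of dimension $2n\ge 4$ that is maximal and whose Hilbert square is also maximal.

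The second step is to invoke Theorem \ref{highdegree2kfolds}: a real nonsingular projective complete intersection of positive even dimension has maximal Hilbert square only if it is a linear subspace, a maximal quadric, a maximal intersection of two quadrics, or a maximal cubic surface in $\PP^3$. A cubic hypersurface of dimension $2n\ge 4$ is none of these — it is a hypersurface of degree $3$, hence not linear, not a quadric, not an intersection of two quadrics, and its dimension is at least $4$, so it is certainly not the cubic surface in $\PP^3$. Therefore $X^{[2]}$ is not maximal, i.e. $\defi(X^{[2]})>0$. Combined with the hypothesis $\defi(X)=0$, the displayed relation forces $\defi(F(X))=\defi(X^{[2]})>0$, so $F(X)$ is not maximal. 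This completes the argument.

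There is essentially no technical obstacle here, as the corollary is a formal consequence of a named relation and an already-established theorem; the only point requiring a word of care is the provenance and precise form of the Galkin-Shinder-Voisin relation, which is attributed to a private communication of L.~Fu and referenced to \cite[section 4]{bfr}. I would state explicitly that we use it as a black box, valid for smooth real cubic hypersurfaces of any dimension, and note that its derivation rests on the Galkin-Shinder-Voisin diagram relating $X$, $X^{[2]}$ and $F(X)$ passed through the Smith exact sequences. One should also remark, for completeness, that the hypothesis ``$X$ maximal'' is genuinely needed in the present formulation: without it, the relation only gives $\defi(F(X)) = \defi(X^{[2]}) - (n+1)\defi(X)$, which need not be positive a priori. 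The footnote about the conjectural existence of maximal real cubic hypersurfaces in all dimensions is a separate matter and does not affect the logical structure of the proof, which is conditional on maximality of $X$ in any case.
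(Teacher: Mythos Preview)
Your argument is correct and matches the paper's approach exactly: the paper presents the corollary as an immediate consequence of the displayed Galkin--Shinder--Voisin relation together with Theorem~\ref{highdegree2kfolds}, and your two-step deduction (X maximal gives $\defi(F(X))=\defi(X^{[2]})$, and Theorem~\ref{highdegree2kfolds} forces $\defi(X^{[2]})>0$ for cubics of dimension $\ge 4$) is precisely that. One small quibble: your parenthetical claim that ``$\defi(F(X))$ vanishes precisely when both $\defi(X)=0$ and $\defi(X^{[2]})=0$'' is not justified in the $\Rightarrow$ direction by the relation alone (nothing a priori excludes $\defi(X^{[2]})=(m+1)\defi(X)>0$ with $\defi(F(X))=0$, where $m=\dim X$), but you never actually use that direction, so the proof stands.
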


We study next the maximality of the Hilbert square of real algebraic varieties whose 
complex locus has no odd degree cohomology.

\begin{thm}
\label{connected}
Let $X$ be a real nonsingular projective variety of dimension $\ge 2$
\footnote{The only exception in dimensions $<2$ is that of 
$X=\PP^1$ with a real structure without real points.}
satisfying $H_{\rm odd}(X)=0.$ If the Hilbert square $X^{[2]}$ is maximal, then 
$\beta_k(X(\RR))=\beta_{2k}(X(\CC))$ for every $k\geq 0.$ In particular, 
$X(\RR)$ is connected.
\end{thm}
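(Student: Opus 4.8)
The plan is to compute the relevant Smith-Thom deficiency via a Mayer-Vietoris argument adapted to the Hilbert square, exploiting the hypothesis $H_{\mathrm{odd}}(X)=0$ to make every cohomology group purely even and hence force rigidity. Recall that $X^{[2]}$ is obtained from the symmetric square $\mathrm{Sym}^2 X$ by blowing up the diagonal, equivalently from the blow-up $\widetilde{X\times X}$ of $X\times X$ along the diagonal $\Delta$ by passing to the $\ZZ/2$-quotient. So first I would write down the $\FF_2$-homology of $X^{[2]}(\CC)$ using the projective bundle / blow-up formula: $H_*(X^{[2]}(\CC);\FF_2)$ is built from $\mathrm{Sym}^2 H_*(X(\CC);\FF_2)$ together with a contribution from the exceptional divisor $\PP(T_X)$, which is a $\PP^{n-1}$-bundle over $X$. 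Since all odd cohomology of $X$ vanishes, $H_{\mathrm{odd}}(X(\CC))=0$ implies $H_{\mathrm{odd}}(X^{[2]}(\CC))=0$ as well, and $\dim H_*(X^{[2]}(\CC);\FF_2)$ becomes an explicit quadratic-plus-linear expression in the Betti numbers $b_i:=\beta_i(X(\CC))$.

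Next I would carry out the parallel computation on the real locus. Here $X^{[2]}(\RR)$ is \emph{not} simply the Hilbert square of $X(\RR)$: it has two pieces, the locus of pairs of conjugate non-real points (governed by $X(\CC)\setminus X(\RR)$ modulo conjugation, i.e. essentially $X(\CC)/\mathrm{conj}$ minus a neighborhood of $X(\RR)$) glued along $\PP(T_X(\RR))\cong \PP(T_{X(\RR)})$ to the honest real Hilbert square $X(\RR)^{[2]}=\mathrm{Bl}_\Delta(X(\RR)\times X(\RR))/(\ZZ/2)$. I would run a Mayer-Vietoris sequence for this decomposition with $\FF_2$-coefficients, and then invoke Theorem \ref{converse} (so that $X$ itself is maximal, whence the Smith-Thom equality holds for $X$ and one controls $\dim H_*(X(\CC)/\mathrm{conj};\FF_2)$ via the Smith sequence for the conjugation action). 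Combining the complex count, the real count, and the maximality hypothesis $\defi(X^{[2]})=0$ should yield an identity that, term by term in each degree $k$, reads $\beta_k(X(\RR))\ge \beta_{2k}(X(\CC))$ with equality forced; the Smith inequality applied degreewise to $X$ gives the reverse, pinning down $\beta_k(X(\RR))=\beta_{2k}(X(\CC))$ for all $k$. Taking $k=0$ and using $\beta_0(X(\CC))=1$ (irreducibility) gives $\beta_0(X(\RR))=1$, i.e. $X(\RR)$ connected.

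The main obstacle I anticipate is bookkeeping the contribution of the exceptional/diagonal locus correctly on the real side: the rank of the Mayer-Vietoris connecting map involving $\PP(T_{X(\RR)})\to X(\RR)$ and its interaction with the Gysin/Euler class of $T_{X(\RR)}$ is exactly the "rank of a suitable Mayer-Vietoris mapping" that the abstract advertises, and showing this rank takes the extremal value forced by maximality is the crux. A secondary subtlety is that a priori one only gets the \emph{total} Betti-number identity $\sum_k \beta_k(X(\RR)) = \sum_k \beta_{2k}(X(\CC))$; upgrading this to the degreewise statement requires combining it with the individual Smith inequalities $\beta_k(X(\RR))\le \sum_{i}\beta_i(X(\CC))$ more carefully — here the hypothesis $H_{\mathrm{odd}}(X)=0$ is essential, since it is what makes $\bigoplus_k H_{2k}(X(\CC);\FF_2)$ the \emph{only} available target and collapses the sum to a single term in each degree. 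Once the degreewise equalities are in hand, connectedness of $X(\RR)$ is immediate.
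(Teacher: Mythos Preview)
Your overall architecture matches the paper's: the cut-and-paste decomposition of $X^{[2]}(\RR)$ into the conjugate-pair piece $\HH_0$ and the symmetric-square piece $\HH$, glued along $E(\RR)=\PP_\RR(T^*X(\RR))$, followed by a Mayer-Vietoris computation whose output is an expression for $\defi(X^{[2]})$ in terms of $\rank(\mu_*)$. You also correctly invoke Theorem~\ref{converse} to get $X$ maximal. This much is right and reproduces the paper's Proposition~\ref{mu-criterium}: under $H_{\rm odd}(X)=0$ one finds $\defi(X^{[2]})=2\bigl(\rank\mu_*-\tfrac{n}{2}\beta_*\bigr)$, so maximality gives the single scalar equation $\rank\mu_*=\tfrac{n}{2}\beta_*$.

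The gap is in passing from this single total-rank equation to the degreewise identities $\beta_k(X(\RR))=\beta_{2k}$. Your proposed mechanism, a ``degreewise Smith inequality'' of the form $\beta_k(X(\RR))\le\beta_{2k}(X(\CC))$, does not exist; Smith theory only controls the total Betti number, and the weak bound $\beta_k(X(\RR))\le\beta_*(X(\CC))$ you write afterward is far too slack. The paper supplies the missing idea in two steps. First, the ``half-lives half-dies'' principle (Lemma~\ref{boundary}) gives $\dim\Ker(\inc_{0*})=\dim\Ker(\inc_*)=\tfrac{1}{2}\beta_*(E(\RR))=\tfrac{n}{2}\beta_*$; since $\Ker(\mu_*)=\Ker(\inc_{0*})\cap\Ker(\inc_*)$ and $\dim\Ker(\mu_*)=\tfrac{n}{2}\beta_*$ by maximality, one is forced into the \emph{equality of subspaces} $\Ker(\inc_{0*})=\Ker(\inc_*)$, hence $\im(\inc_0^k)=\im(\inc^k)$ in every degree (Proposition~\ref{coh-criterium}). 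Second, each side has an explicit rank formula: $\rank(\inc_0^k)=\sum_{i=0}^{k}\beta_i$ from the Smith sequence of $(X,\Conj)$ (Lemma~\ref{inc-zero}), and $\rank(\inc^k)=\sum_{\ell\le\lfloor k/2\rfloor}\beta_\ell(X(\RR))$ from Totaro's basis for $H^*(F^{(2)}\setminus\Delta F)$ (Corollary~\ref{inc-sym}). Equating these for each $k<n$ and using $\beta_{2i+1}=0$ gives $\beta_k(X(\RR))=\beta_{2k}$ by induction; Poincar\'e duality and maximality of $X$ handle the middle degree. Without the equality-of-kernels step and the two explicit rank computations, the passage from the total identity to the degreewise one cannot be completed.
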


The conditions found in Theorem \ref{connected} are not sufficient to ensure the 
maximality of the Hilbert square in general ({\it cf.}, Theorem \ref{highdegree2kfolds}). 
However, we identify a case when they are sufficient.

\begin{thm}
\label{Pn}
Let $X$ be a maximal real nonsingular projective variety such that $H_{\rm odd}(X)=0.$ 
If for every $k\le \dim \, X$, the group $H_{k}(X(\RR))$ is generated by fundamental 
classes of real loci of real smooth algebraic submanifolds, then the Hilbert square $X^{[2]}$ 
is maximal.
\end{thm}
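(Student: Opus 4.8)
The plan is to compute $\defi(X^{[2]})$ via the standard description of the Hilbert square as the quotient $\mathrm{Bl}_{\Delta}(X\times X)/\ZZ_2$, and to show that under the stated hypotheses the Smith inequality becomes an equality. Write $Y=\mathrm{Bl}_{\Delta}(X\times X)$ with its natural involution $\tau$ swapping the factors; then $X^{[2]}=Y/\tau$, the fixed locus of $\tau$ is the exceptional divisor $E=\PP(T_X)$ over the diagonal, and $E(\RR)=\PP(T_{X(\RR)})$ over $X(\RR)$. Since $X^{[2]}$ is obtained from $X\times X$ by blowing up and quotienting, the complex Betti numbers of $X^{[2]}$ are determined by those of $X$; because $H_{\rm odd}(X)=0$ and $X$ is maximal, one gets $H_{\rm odd}(X\times X)=0$ and hence $H_{\rm odd}(Y)=0$ as well, so $X^{[2]}$ also has vanishing odd cohomology and $\dim H_*(X^{[2]}(\CC);\FF_2)$ is computed by a clean Künneth-plus-blow-up formula.

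Next I would attack the real side. The key input is a Mayer–Vietoris / Smith-type computation: decompose $X^{[2]}(\RR)$ into the locus $U$ parametrizing pairs of distinct real points (which is $(\mathrm{Sym}^2 X(\RR))\setminus\text{diagonal}$, a manifold fibered in a way controlled by $X(\RR)\times X(\RR)$), the locus over the diagonal coming from $E(\RR)=\PP(T_{X(\RR)})$, and the locus parametrizing conjugate pairs of non-real points, which is an open piece of $X(\CC)\setminus X(\RR)$ modulo conjugation. The maximality of $X$ (so that $X(\CC)\setminus X(\RR)$ is "as small as possible" in $\FF_2$-homology) together with $H_{\rm odd}(X)=0$ should force all the relevant connecting maps to behave optimally. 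Concretely, I expect to reduce the whole problem, as in Theorem \ref{defect-odd} and the Mayer–Vietoris description advertised in the abstract, to showing that a single comparison map — the map from $H_*(X(\RR);\FF_2)$-type data into the cohomology of the symmetric-square and projectivized-tangent-bundle pieces — is surjective. This is exactly where the hypothesis on $H_k(X(\RR))$ being generated by fundamental classes of real smooth algebraic submanifolds enters: given a basis of $H_*(X(\RR);\FF_2)$ represented by submanifolds $Z_i\subset X(\RR)$, the classes of $Z_i\times Z_j$, of the diagonal-type classes, and of $\PP(T_{Z_i})\subset\PP(T_{X(\RR)})$ give explicit cycles in $X^{[2]}(\RR)$ realizing all the needed cohomology, which pins down the rank of the Mayer–Vietoris map and yields $\defi(X^{[2]})=0$.

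The main obstacle, and the step requiring the most care, is the bookkeeping that translates "$H_*(X(\RR))$ is generated by algebraic submanifold classes" into surjectivity of the precise Mayer–Vietoris (or Smith long exact sequence) homomorphism that measures the deficiency of $X^{[2]}$: one must check that the product classes, the projectivized-tangent-bundle classes over the $Z_i$, and the classes coming from the conjugation quotient together span, with no unexpected relations or missing generators, the target cohomology groups in every degree. A secondary technical point is handling the $\ZZ_2$-quotient correctly — the passage from $Y$ to $X^{[2]}=Y/\tau$ must be done with the Smith sequence for the involution $\tau$, using that $\tau$ acts freely off $E$, so that $\FF_2$-Betti numbers of $X^{[2]}$ relate to the $\tau$-equivariant ones of $Y$ and to those of the fixed locus $E$; here the vanishing of odd cohomology is what makes the equivariant spectral sequence degenerate as needed. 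Once these two points are settled, the inequality $\dim H_*(X^{[2]}(\RR);\FF_2)\ge \dim H_*(X^{[2]}(\CC);\FF_2)$ combined with the Smith inequality gives equality, i.e. $X^{[2]}$ is maximal.
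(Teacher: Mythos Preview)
Your plan captures the right scaffolding---the cut-and-paste decomposition of $X^{[2]}(\RR)$ into the pieces $\HH_0$ (conjugate-pair locus), $\HH=\sqcup_i\HH_i$ (symmetric-square locus), and their common boundary $E(\RR)=\PP_\RR(T^*X(\RR))$, together with the Mayer--Vietoris map $\mu$---and you correctly anticipate that the deficiency reduces to a rank computation for $\mu$. But there is a genuine gap in how you propose to use the hypothesis, and it is exactly the step you flag as ``the main obstacle''.

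You treat the generators as submanifolds $Z_i\subset X(\RR)$ and propose to build cycles from $Z_i\times Z_j$, diagonal-type classes, and $\PP(T_{Z_i})$. All of these live only on the $\HH$-side or in $E(\RR)$; none of them touches $\HH_0$. The hypothesis, however, is not that $H_*(X(\RR))$ is generated by arbitrary smooth submanifolds, but by real loci $Y_\alpha(\RR)$ of $\Conj$-invariant \emph{complex} submanifolds $Y_\alpha\subset X$. This is precisely what the paper exploits: for such a $Y_\alpha$ one forms the complex Hilbert square $Y_\alpha^{[2]}\subset X^{[2]}$, whose real locus $Y_\alpha^{[2]}(\RR)$ is a cycle in $X^{[2]}(\RR)$ meeting \emph{both} $\HH_0$ and $\HH$. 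Restricting its class $\Upsilon_{Y_\alpha}$ to $\HH_0$ gives $\theta_{Y_\alpha}$, restricting to $\HH$ gives $\sigma_{Y_\alpha}=[Y_\alpha(\RR)^{(2)}\setminus\Delta]$, and by commutativity $\inc_0^*(\theta_{Y_\alpha})=\inc^*(\sigma_{Y_\alpha})$ in $H^*(E(\RR))$. That identity is the bridge your argument is missing.

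The paper then packages this into a short proof via two reductions you do not isolate. First, under $H_{\rm odd}(X)=0$ the deficiency formula becomes $\defi(X^{[2]})=2(\rank\mu_*-\tfrac{n}{2}\beta_*)$, so maximality is equivalent to $\im(\inc_0^*)=\im(\inc^*)$ inside $H^*(E(\RR))$. Second, by the elementary half-dimensionality lemma for boundary inclusions, both images already have dimension $\tfrac{n}{2}\beta_*$, so it suffices to show one containment, $\im(\inc^*)\subset\im(\inc_0^*)$. Totaro's basis theorem says $\im(\inc^*)$ is generated by the classes $\inc^*(b^j\sigma_{Y_\alpha})$, and the bridge identity above shows each of these equals $\inc_0^*(b_0^j\theta_{Y_\alpha})$, finishing the proof in one line. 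Your proposal to ``span the target cohomology groups in every degree with no unexpected relations'' is aiming at something strictly harder than what is needed; the point is not to compute $H^*(X^{[2]}(\RR))$ but only to match two half-dimensional images in $H^*(E(\RR))$.
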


\begin{cor}
The Hilbert square is maximal for any non-singular toric variety equipped with the 
standard real structure, maximal non-singular quadric hypersurfaces in $\PP^{2n+1},$ 
and finite products of such varieties equipped with the product real structure.
\qed
\end{cor}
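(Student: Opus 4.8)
The plan is to deduce the Corollary from Theorem~\ref{Pn}: for each of the three families of varieties one verifies the three hypotheses of that theorem — that $X$ is maximal, that $H_{\rm odd}(X)=0$, and that $H_k(X(\RR);\FF_2)$ is generated by fundamental classes of real loci of real smooth algebraic submanifolds for every $k\le\dim X$ — and then applies Theorem~\ref{Pn}. The first point to record is that all three hypotheses are stable under finite products equipped with the product real structure: maximality and the vanishing of odd cohomology follow from the K\"unneth formula (used over $\CC$ and over $\RR$ with $\FF_2$-coefficients), while if $H_*(X(\RR);\FF_2)$ and $H_*(Y(\RR);\FF_2)$ are generated by classes $[A_i(\RR)]$ and $[B_j(\RR)]$ of the prescribed type, then under the K\"unneth isomorphism $H_*((X\times Y)(\RR);\FF_2)\cong H_*(X(\RR);\FF_2)\otimes H_*(Y(\RR);\FF_2)$ the classes $[(A_i\times B_j)(\RR)]=[A_i(\RR)]\times[B_j(\RR)]$ generate, and $A_i\times B_j\subset X\times Y$ is again a real smooth algebraic submanifold. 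Hence it remains to treat nonsingular toric varieties and maximal quadrics in $\PP^{2n+1}$ individually.

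For a nonsingular complete toric variety $X$ with its standard real structure the equality $H_{\rm odd}(X)=0$ is classical: $H^*(X(\CC))$ is concentrated in even degrees and spanned by the classes of torus-orbit closures. The maximality of $X(\RR)$ is the affirmative answer, in the smooth case, to the question whether toric varieties are $M$-varieties. For the generation hypothesis I would invoke that $H_*(X(\RR);\FF_2)$ is generated by the fundamental classes of the real loci $V(\tau)(\RR)$ of the torus-orbit closures $V(\tau)$ of $X$; since $X$ is smooth, each $V(\tau)$ is a nonsingular complete toric variety — the one attached to the star of $\tau$ — hence a real smooth algebraic submanifold of $X$, and Theorem~\ref{Pn} applies.

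For a maximal nonsingular quadric $Q\subset\PP^{2n+1}$ (with $n\ge 2$; for $n=1$ one has the toric variety $\PP^1\times\PP^1$, already covered) I would first identify the real form: comparing $\FF_2$-Betti numbers signature by signature shows that, up to real isomorphism, the unique maximal real form is that of signature $(n+1,n+1)$, so that $Q(\RR)\cong(S^n\times S^n)/(\ZZ/2)$ with $\ZZ/2$ acting diagonally by the antipodal involution — equivalently, $Q(\RR)$ is the total space of the unit sphere bundle $\pi\colon S(\gamma^{\oplus(n+1)})\to\RR\PP^n$ of the $(n+1)$-fold sum of the tautological line bundle $\gamma$, with a section $s$ given in homogeneous coordinates by $[x]\mapsto[(x,x)]$. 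The vanishing $H_{\rm odd}(Q)=0$ is standard for an even-dimensional quadric. For the generation hypothesis I would use two families of cycles: the generic linear sections $Q\cap L$, with $L$ a real linear subspace of codimension $j$ for $0\le j\le n$, which are nonsingular real quadrics with $[(Q\cap L)(\RR)]=u^j\cap[Q(\RR)]$, where $u=h|_{Q(\RR)}\in H^1(Q(\RR);\FF_2)$ is the restriction of the hyperplane class — nonzero, since the double cover of $Q(\RR)$ induced from $S^{2n+1}\to\RR\PP^{2n+1}$ is the connected space $S^n\times S^n$, so $u=\pi^*u_0$ with $u_0$ the generator of $H^1(\RR\PP^n;\FF_2)$; and the linear subspaces $\PP^k\subset Q$, $0\le k\le n$, lying in one of the two rulings — which are real, e.g.\ $\Lambda=\{x_i=y_i\}\cong\PP^n$ and its linear subspaces — whose real loci are copies of $\RR\PP^k$ carried by the section $s$. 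The Gysin sequence of $\pi$, whose $\FF_2$-Euler class $u_0^{n+1}$ vanishes in $H^{n+1}(\RR\PP^n;\FF_2)=0$, splits and gives $\dim H_k(Q(\RR);\FF_2)=1$ for $k\ne n$ and $=2$ for $k=n$; one then checks that the $2n+2$ classes listed form a basis, using that $\pi_*[\RR\PP^k]$ generates $H_k(\RR\PP^n;\FF_2)$ for $k\le n$, that $u^j\cap[Q(\RR)]\ne 0$ for $j\le n$, and that in degree $n$ the class $[\RR\PP^n]$ is outside $\ker\pi_*$ while $u^n\cap[Q(\RR)]$ lies in it. All the subvarieties involved being real and smooth, Theorem~\ref{Pn} gives the maximality of $Q^{[2]}$.

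The main obstacle is this last step for the quadric. The even-codimensional part of $H_*(Q(\RR);\FF_2)$ is produced cheaply by linear sections, but the classes in degrees $<n$ and the second middle-degree class must come from the linear subspaces contained in the rulings, and one has to verify that these geometric cycles are linearly independent over $\FF_2$ — which is exactly the role of the identification of $Q(\RR)$ with the sphere bundle over $\RR\PP^n$ and of the ensuing Gysin computation. The toric case, by contrast, reduces to the known description of $H_*(X(\RR);\FF_2)$ via orbit closures, and the product case is a formal consequence of the K\"unneth formula.
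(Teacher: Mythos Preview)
Your proposal is correct and follows exactly the route the paper intends: the corollary is stated with a bare \qed\ immediately after Theorem~\ref{Pn}, so the authors regard it as a direct application of that theorem, and you carry out precisely the verification of its three hypotheses (maximality, $H_{\rm odd}=0$, generation of $H_*(X(\RR))$ by real loci of smooth real subvarieties) in each of the three cases. Your treatment of products via K\"unneth, of toric varieties via orbit closures, and of the even-dimensional maximal quadric via the identification $Q(\RR)\cong S(\gamma^{\oplus(n+1)})\to\RR\PP^n$ together with linear sections and linear subspaces in a ruling, supplies the details the paper omits; note also that the quadric case is independently covered by the paper through Corollary~\ref{odd-coro} and the remark following Theorem~\ref{highdegree2kfolds}.
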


More examples of real projective manifolds with maximal Hilbert square will 
be presented elsewhere \cite{effectivity}, in a different context.

\smallskip

The results obtained in Theorems \ref{converse}, \ref{connected}, and \ref{Pn}, including 
their proofs, literally extend from the real algebraic setting to compact complex manifolds 
equipped with an anti-holomorphic involution.


\subsection*{Acknowledgments} 
We are greatly thankful to L. Fu for suggestions and comments on this work. The first 
author acknowledges the support from the grant ANR-18-CE40-0009 of French 
Agence Nationale de Recherche, while the second author acknowledges the support 
of a Professional Travel Grant from Vanderbilt University. A part of this work was 
completed during the author's participation at the ``Research in Paris" program at the 
Institut Henri Poincar\'e, and we thank this institution for the hospitality and support.


\notations


\begin{itemize}
\item[ 1)] By a complex variety equipped with a real structure, we mean a pair 
$(Y,\Conj)$ consisting of a complex variety $Y$ and an anti-holomorphic involution 
$\Conj:Y\ra Y.$ When the anti-holomorphic conjugation is understood from the 
context, we will simply say that $Y$ is defined over the reals.
\item[ 2)]  Let $Y$ be an algebraic variety defined over $\RR,$ and $G$ denote 
the Galois group ${\text{Gal}}(\CC/\RR).$ The group $G$ is a cyclic group of 
order $2$ and  acts on the locus of complex points $Y(\CC).$ The non-trivial 
element of $G$ acts as an anti-holomorphic involution, which we will denote 
by $c,$ and the fixed point set of the action coincides with the set of real points 
of $Y.$ The pair $(Y, \Conj)$ is a variety equipped with a real structure. To mediate 
between the notations traditionally used for varieties equipped with real structures 
and for algebraic varieties defined over $\RR,$ we will use from now on $Y$ to 
denote the set of complex points, and $Y(\RR)$ the set of real points. 
\item[ 3)] Unless  explicitly stated, all the homology and cohomology groups have 
coefficients in the field $\FF_2=\ZZ/2\ZZ$. We use  $\beta_i(\,\cdot\,)$ and 
$b_i(\,\cdot\,)$  to denote the Betti numbers when the coefficients are in 
$\FF_2$ or in $\QQ,$ respectively. For convenience, we allow the index $i$ 
to be an arbitrary integer, by setting $\beta_i=0$ for $i< 0.$ We will use  the 
notations $\beta_*(\,\cdot\,)$  and $b_*(\,\cdot\,)$ for the corresponding total Betti 
numbers, while $\beta_{\rm odd}(\,\cdot\,)$ and $\beta_{\rm even}(\,\cdot\,)$ 
denote the sums $\displaystyle \sum_{i\geq 0} \beta_{2i+1}(\,\cdot\,)$ and 
$\displaystyle \sum_{i\geq 0} \beta_{2i}(\,\cdot\,),$ respectively.
\end{itemize}


\section{Preliminaries}


\subsection{Smith theory}
\label{Smith.theory}\label{s7}


Most results cited in this section are due to P.A. Smith. Proofs can be found, for example, in 
\cite[Chapter 3]{bredon} and \cite[Chapter 1]{dik}.

\smallskip

Throughout the section we consider a topological space $X$ with a cellular involution 
$c:X\to X,$ that is, $X$ is a finite CW-complex, $\Conj$ transforms cells into cells and acts 
identically on each invariant cell \footnote{This rather traditional condition that $X$ is 
a CW-complex (or a simplicial complex, as in \cite{bredon}) can be relaxed at the cost 
of using \v Cech cohomology and assuming $X$ to be finite dimensional locally compact 
Hausdorff topological space. In this paper, Smith theory is applied to smooth manifolds 
and smooth involutions, so the CW-complex assumption is largely enough.}. Let  
$F=\Fix c,\,\bar X=X/c,$ and denote by $\inc: F\hookrightarrow X$ and $\pr: X\ra \bar X$ 
the natural inclusion and projection, respectively.

\smallskip

Consider the  \emph{Smith chain complexes} defined by 
\begin{align*}
\Sm_*(X)&=\Ker[(1+c_*)\, :S_*(X)\to S_*(X)],\\
\Sm_*(X,F)&=\Ker[(1+c_*)\, :S_*(X,F)\to S_*(X,F)].
\end{align*}
and their \emph{Smith homology} $H_r(\Sm_*(X))$ and $H_r(\Sm_*(X,F)),$ respectively. 
There exists a canonical isomorphism $\Sm_*(X,F)=\im[(1+c_*)\, :S_*(X)\to S_*(X)].$ 
The \emph{Smith sequences} are the long homology and cohomology exact sequences 
associated with the short exact sequence of complexes
\begin{equation}
\label{smith-sequence}
0\ra\Sm_*(X)\xrightarrow{\text{inclusion}}S_*(X)
\xrightarrow{1+c_*}\Sm_*(X,F)\ra 0.
\end{equation}

There is also a canonical splitting, $\Sm_*(X)=S_*(F)\oplus\im(1+c_*),$ and the 
transfer homomorphism $\tr^*:S_*(\bar X,F)\to\Sm_*(X,F)$ is an isomorphism 
\cite[Chapter 3]{bredon} (see also {\it op. cit.} for the cohomology version). 
In view of these identifications the long exact sequences associated to 
(\ref{smith-sequence}) yield:

\begin{thm}
\label{Smith.seq}
There are two natural, in respect to equivariant maps, exact sequences, called  
\emph{(homology and cohomology) Smith sequences of $(X,c)$:}
$$
\begin{gathered}
\cdots \ra H_{p+1}(\bar X,F)\xrightarrow[]{\Delta} H_p(\bar X,F)\oplus H_p(F)
  \xrightarrow{\tr^*+\inc_*} H_p(X)
 \xrightarrow{\pr_*} H_p(\bar X,F)\ra \rlap{\,},\\
\ra H^p(\bar X,F)\xrightarrow{\pr^*}H^p(X)\xrightarrow{{\tr_*}\oplus{\inc^*}} 
  H^p(\bar X,F)\oplus H^p(F)\xrightarrow{\Delta} H^{p+1}(\bar X,F)\ra\cdots  \rlap{\,.}
\end{gathered}
$$

The homology and cohomology connecting homomorphisms $\Delta$ are given by
$$
x\mapsto x\cap\omega\oplus\partial x\quad\text{and}\quad x\oplus f\mapsto
x\cup\omega+\delta f,
$$
respectively, where $\omega \in H^1(\bar X\setminus F)$ is the characteristic class of 
the double covering $X\setminus F\to\bar X\setminus F$. The images of ${\tr^*}+\inc_*$ 
and~$\pr^*$ consist of invariant classes:
$\im\tr^*\subset\Ker(1+c_*)$ and $\im\pr^*\subset\Ker(1+c^*)$.
\end{thm}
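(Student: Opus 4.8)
The plan is to obtain both sequences as the long exact (co)homology sequences of the short exact sequence of complexes (\ref{smith-sequence}), and then to rewrite their terms and maps by means of the three identifications already recorded: the equality $H_*(S_*(X))=H_*(X)$, the direct sum decomposition $\Sm_*(X)=S_*(F)\oplus\im(1+c_*)$, and the transfer isomorphism $\tr^*\colon S_*(\bar X,F)\xrightarrow{\,\sim\,}\Sm_*(X,F)=\im(1+c_*)$. The cohomology statement follows throughout by applying the exact functor $\Hom_{\FF_2}(-,\FF_2)$ to (\ref{smith-sequence}) and to these identifications, so I will phrase everything in homology.

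First I would record the chain-level identity governing the maps. Write $\overline{\pr}_{\#}\colon S_*(X,F)\to S_*(\bar X,F)$ for the map induced by $\pr$. Since $\pr$ restricts to the inclusion $F\hookrightarrow\bar X$ and to the quotient double covering $X\setminus F\to\bar X\setminus F$, the $\tr^*$-image of a simplex of $\bar X\setminus F$ is the sum of its two lifts, whence $\tr^*\circ\overline{\pr}_{\#}=1+c_*$ as maps $S_*(X,F)\to\Sm_*(X,F)$. It follows that the inclusion $\Sm_*(X)\hookrightarrow S_*(X)$ restricts to $\inc_*$ on $S_*(F)$, and on $\im(1+c_*)$ — identified via $\tr^*$ with $S_*(\bar X,F)$ — equals the transfer homomorphism of the double covering; hence on homology this map is $\tr^*+\inc_*$. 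Likewise $1+c_*\colon S_*(X)\to\Sm_*(X,F)$ corresponds, under $(\tr^*)^{-1}$, to the map induced by $\pr$, hence to $\pr_*$ on homology. Dualizing gives $\pr^*$ and $\tr_*\oplus\inc^*$ in the cohomology sequence. Moreover $\im\tr^*\subset\Ker(1+c_*)$, since $\tr^*$ takes values in $\im(1+c_*)\subset\Ker(1+c_*)$ already on chains, and $\im\pr^*\subset\Ker(1+c^*)$, since $\pr\circ c=\pr$. Naturality in equivariant maps is then formal: any such map carries $F$ to $F$, descends to $\bar X$, and is compatible with $\inc$, $\pr$, $\tr^*$, and $1+c_*$.

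The substantive step is the computation of the connecting homomorphism $\Delta$, which I would split according to the two summands of $\Sm_*(X)$. Mapping (\ref{smith-sequence}) onto the short exact sequence $0\to S_*(F)\to S_*(\bar X)\to S_*(\bar X,F)\to 0$ of the pair $(\bar X,F)$ — via the projection $\Sm_*(X)\to S_*(F)$, the chain map induced by $\pr$, and $(\tr^*)^{-1}$ — yields a morphism of long exact sequences identifying the $H_*(F)$-component of $\Delta$ with the connecting homomorphism $\partial$ of $(\bar X,F)$ (dually, its $H^*(F)$-component with $\delta$). Quotienting (\ref{smith-sequence}) by its subcomplex $S_*(F)$ gives instead the short exact sequence
$$
0\to S_*(\bar X,F)\xrightarrow{\,\tr^*\,}S_*(X,F)\xrightarrow{\,\overline{\pr}_{\#}\,}S_*(\bar X,F)\to 0 ,
$$
and the same type of comparison identifies its connecting homomorphism with the remaining $H_*(\bar X,F)$-component of $\Delta$; it therefore suffices to show that this component equals $\,\cdot\cap\omega$ (resp.\ $\cdot\cup\omega$). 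By excising a regular neighborhood of the fixed locus, this sequence is quasi-isomorphic to the analogous one attached to the free double covering $X\setminus F\to\bar X\setminus F$, which is precisely the short exact sequence of complexes obtained by tensoring the (termwise free) singular chain complex of $X\setminus F$ over $\FF_2[\ZZ/2]$ with the nontrivial extension $0\to\FF_2\to\FF_2[\ZZ/2]\to\FF_2\to 0$ of trivial $\FF_2[\ZZ/2]$-modules; the connecting homomorphism of the resulting long exact sequence is, by the standard interpretation of the extension class, cap (resp.\ cup) product with the class $\omega\in H^1(\bar X\setminus F)$ classifying the covering — this is the Gysin sequence of a free involution. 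Transporting this back along the excision isomorphism identifies the last component of $\Delta$ with $\,\cdot\cap\omega$ (resp.\ $\cdot\cup\omega$), and combining the two components yields the stated formula.

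I expect the excision step — carrying the Gysin-sequence description of $\Delta$ across from the free locus $\bar X\setminus F$ to the pair $(\bar X,F)$ — to be the main technical obstacle: it requires setting up the relative cap/cup product $H_*(\bar X,F)\otimes H^1(\bar X\setminus F)\to H_*(\bar X,F)$, handling the lifting of chains of $\bar X$ to $X$ near the fixed locus by the usual subdivision argument (already needed to see that $\tr^*$ is a chain isomorphism), and verifying that the fixed locus contributes to $\Delta$ nothing beyond the $\partial$ (resp.\ $\delta$) term already isolated. All of this is standard in Smith theory and is carried out in detail in \cite[Chapter 3]{bredon} and \cite[Chapter 1]{dik}, from which the theorem may simply be quoted.
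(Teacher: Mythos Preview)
Your proposal is correct and follows exactly the approach the paper indicates: the paper does not give a proof of Theorem~\ref{Smith.seq} but simply states that the long exact sequences associated to~(\ref{smith-sequence}), combined with the splitting $\Sm_*(X)=S_*(F)\oplus\im(1+c_*)$ and the transfer isomorphism $\tr^*$, yield the result, and it defers all details to \cite[Chapter~3]{bredon} and \cite[Chapter~1]{dik}. Your sketch fills in precisely those details---identifying the maps, splitting the connecting homomorphism into the $\partial$ and $\cap\,\omega$ pieces via comparison with the pair sequence and the Gysin sequence of the free locus---and correctly flags the excision/relative-cap-product bookkeeping as the only point requiring care, which is indeed what the cited references handle.
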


The following immediate consequences of  Theorem \ref{Smith.seq}, which we 
state in the homology setting, have an obvious counterpart for cohomology.

\begin{cor}
\label{cor-smith}
Let $(X, c)$ be a topological space equipped with a cellular involution. Then
\begin{equation}
\label{Smith-dims}
\Dim~H_*(F)+2\sum_{p}\Dim\coker({\tr^p}+\inc_p)=\Dim~H_*(X).
\end {equation}
As a consequence, we have 
\begin{equation}
\label{Smith-ineq}
\Dim~H_*(F)\le\Dim~H_*(X)\quad{\text{\rm (Smith inequality)}}.
\end{equation}
\end{cor}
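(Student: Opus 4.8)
The plan is to obtain the two displayed relations \eqref{Smith-dims} and \eqref{Smith-ineq} directly from the homology Smith exact sequence of Theorem~\ref{Smith.seq} by an elementary count of $\FF_2$-dimensions. Abbreviate $A_p=H_p(\bar X,F)$, $B_p=H_p(F)$, $C_p=H_p(X)$, and denote the maps of that sequence by
\[
\cdots\to A_{p+1}\xrightarrow{\;\Delta_{p+1}\;}A_p\oplus B_p\xrightarrow{\;f_p\;}C_p\xrightarrow{\;g_p\;}A_p\xrightarrow{\;\Delta_p\;}A_{p-1}\oplus B_{p-1}\to\cdots,
\]
where $f_p=\tr^p+\inc_p$ and $g_p=\pr_p$. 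Because $X$ is a finite CW-complex, so are $\bar X$ and $F$, hence all these $\FF_2$-vector spaces are finite-dimensional and vanish for $|p|$ large; in particular every sum below is finite.

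First I would record the degree-by-degree consequences of exactness. Exactness at $C_p$ and at the copy of $A_p$ which is the target of $g_p$ gives $\coker f_p=C_p/\im f_p=C_p/\Ker g_p\cong\im g_p=\Ker\Delta_p$, so $\dim\coker f_p=\dim\Ker\Delta_p$, and likewise $\dim C_p=\dim\Ker g_p+\dim\im g_p=\dim\im f_p+\dim\Ker\Delta_p$. Then, by rank--nullity for $f_p$, by exactness at $A_p\oplus B_p$ (i.e.\ $\Ker f_p=\im\Delta_{p+1}$), and by rank--nullity for $\Delta_{p+1}$,
\[
\dim\im f_p=\dim A_p+\dim B_p-\dim\Ker f_p=\dim A_p+\dim B_p-\dim A_{p+1}+\dim\Ker\Delta_{p+1}.
\]

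Next I would sum over all $p$. Plugging the last formula into $\sum_p\dim C_p=\sum_p\big(\dim\im f_p+\dim\Ker\Delta_p\big)$ and using, after reindexing, that $\sum_p\dim A_p=\sum_p\dim A_{p+1}$ and $\sum_p\dim\Ker\Delta_p=\sum_p\dim\Ker\Delta_{p+1}$, the $A$-terms cancel and one is left with $\sum_p\dim C_p=\sum_p\dim B_p+2\sum_p\dim\Ker\Delta_p$. Substituting $\dim\Ker\Delta_p=\dim\coker f_p=\dim\coker(\tr^p+\inc_p)$ yields precisely \eqref{Smith-dims}, and the Smith inequality \eqref{Smith-ineq} is then immediate, the correction term $2\sum_p\dim\coker(\tr^p+\inc_p)$ being a sum of non-negative integers.

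I do not expect a genuine obstacle here, since the entire argument is bookkeeping in a long exact sequence; the cohomology versions of \eqref{Smith-dims} and \eqref{Smith-ineq} follow identically from the Smith cohomology sequence of Theorem~\ref{Smith.seq}. The only two points deserving attention are that all Betti numbers involved are finite---guaranteed by the finite-CW hypothesis, or by the \v{C}ech-cohomology substitute mentioned in the footnote in the locally compact case---so that the reindexing and cancellation are valid, and that the two occurrences of $A_p$ in the Smith sequence (once as a direct summand of $A_p\oplus B_p$, once as the target of $\pr_p$) must be kept apart when exactness is invoked.
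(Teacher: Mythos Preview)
Your proof is correct and is exactly the kind of dimension count the paper has in mind when it declares the corollary an ``immediate consequence'' of the Smith exact sequence; the paper gives no further argument. Your bookkeeping with $A_p$, $B_p$, $C_p$ and the cancellation of the $A$-terms upon summation is precisely the standard derivation, and your care about finiteness and about distinguishing the two roles of $A_p$ is appropriate.
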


\begin{defn}
Let $(X,\Conj)$ be a topological space equipped with a cellular involution. The integer 
$$
\defi(X,\Conj)=2\sum_{p}\Dim\coker({\tr^p}+\inc_p)
$$
is called the Smith-Thom deficiency of  $(X,\Conj).$
If $\defi(X,\Conj)=0$, the topological space 
$X$ is called \emph{maximal}, or an \emph{$M$-space}, and  $\Conj$ is called an 
\emph{$M$-involution}.
\end{defn}
When the involution is understood from the context, it will be omitted from the notation 
of the Smith-Thom deficiency.

\smallskip 

Notice from Corollary \ref{cor-smith} that $X$ is maximal if and only if 
$\Dim~H_*(F)=\Dim~H_*(X),$ and from Theorem \ref{Smith.seq} we find the 
following characterization of maximality.

\begin{cor}
\label{maxSmith}
Let $(X,c)$ be a topological space equipped with a cellular involution. Then 
$X$ is maximal if and only if the sequence 
$$
0\ra H_{k+1}(\bar X, F)\xrightarrow{\Delta} H_{k}(\bar X, F)\oplus H_k(F)\ra H_k(X)\ra 0
$$
is exact
for every $k\geq 0$.
\end{cor}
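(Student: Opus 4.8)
The plan is to derive the corollary directly from the homology Smith sequence of Theorem \ref{Smith.seq},
$$\cdots\to H_{p+1}(X)\xrightarrow{\pr_*}H_{p+1}(\bar X,F)\xrightarrow{\Delta}H_p(\bar X,F)\oplus H_p(F)\xrightarrow{\tr^*+\inc_*}H_p(X)\xrightarrow{\pr_*}H_p(\bar X,F)\to\cdots,$$
together with the dimension identity \eqref{Smith-dims} from Corollary \ref{cor-smith}. First I would observe that, this being a long exact sequence, the three-term sequence in the statement is automatically exact at its central term $H_k(\bar X,F)\oplus H_k(F)$; hence its exactness is equivalent to the conjunction of two conditions, namely that $\Delta$ be injective on $H_{k+1}(\bar X,F)$ and that $\tr^*+\inc_*$ be surjective onto $H_k(X)$. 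Reading off the neighbouring portions of the Smith sequence, injectivity of $\Delta$ is equivalent to the vanishing of $\pr_*$ on $H_{k+1}(X)$, and surjectivity of $\tr^*+\inc_*$ is equivalent to the vanishing of $\pr_*$ on $H_k(X)$. Letting $k$ range over all non-negative integers, I would conclude that the three-term sequences are exact for every $k$ if and only if $\pr_*$ is the zero map in every degree.

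Next I would close the circle with maximality. The cokernel terms occurring in \eqref{Smith-dims} are precisely the degreewise cokernels of $\tr^*+\inc_*$, and, being non-negative, they sum to zero if and only if each of them is zero; so by Corollary \ref{cor-smith} and the definition of $\defi$, the variety $X$ is maximal if and only if $\tr^*+\inc_*$ is surjective in every degree — which by the previous paragraph is exactly the condition $\pr_*\equiv 0$, which in turn is exactly the exactness of all the three-term sequences. Concretely, this gives both implications at once: if $X$ is maximal then $\tr^*+\inc_*$ is onto in all degrees, so $\pr_*\equiv 0$, so $\Delta$ is injective in all degrees, so every one of the sequences is exact; conversely, if those sequences are exact then $\tr^*+\inc_*$ is surjective in each degree, so every cokernel in \eqref{Smith-dims} vanishes and $\defi(X,\Conj)=0$.

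The whole argument is a diagram chase inside one long exact sequence, so I do not anticipate a serious obstacle. The only points that need care are bookkeeping: being precise about which instance of $\pr_*$ (and of $\Delta$) is invoked at each index, handling the low-degree ends of the sequence where the neighbouring groups vanish, and verifying that the $\coker$ summands of \eqref{Smith-dims} are exactly the cokernels of the map $\tr^*+\inc_*$ appearing in the Smith sequence — so that their total vanishing is equivalent to surjectivity in each separate degree, and not merely to an equality of total $\FF_2$-dimensions. Once those identifications are in place, the corollary follows with no computation.
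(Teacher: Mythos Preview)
Your argument is correct and is exactly the approach the paper has in mind: the corollary is stated without proof, as an immediate consequence of the Smith sequence (Theorem~\ref{Smith.seq}) and the identity~\eqref{Smith-dims} in Corollary~\ref{cor-smith}, and your diagram chase spells this out precisely. There is nothing to add.
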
 

\begin{lem} 
\label{RelativeQuotient}
If a $d$-dimensional space $(X, c)$ is maximal and $r\leq d,$ then
\begin{equation}
\label{relative-aux}
\beta_r(\bar X, F)=\sum_{k=r}^d (\beta_k(X)- \beta_k(F)).
\end{equation}
If, in addition, $d=2n$, $X$ is a smooth closed manifold, $c$ a smooth involution, 
and each component of $F$ is $n$-dimensional, then we have
\begin{equation}
\label{relative-individual}
\beta_{r}(\bar X, F)=\sum_{k=r}^{2n}\beta_k(X), \quad\text{for every $r\ge n+1$,}
\end{equation}
and 
\begin{equation}
\label{relative-main}
\beta_*(\bar X, F)=\frac{n}2\beta_*(X).
\end{equation}
\end{lem}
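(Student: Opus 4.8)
The plan is to derive all three displayed formulas from the maximality characterization in Corollary \ref{maxSmith}, which says that for a maximal $(X,c)$ the short sequences
$$
0\ra H_{k+1}(\bar X, F)\xrightarrow{\Delta} H_{k}(\bar X, F)\oplus H_k(F)\ra H_k(X)\ra 0
$$
are exact for all $k\ge 0$. First I would extract $\beta_{k+1}(\bar X,F)=\beta_k(X)-\beta_k(F)+\beta_k(\bar X,F)$ from the additivity of dimension over this exact sequence, valid for every $k\ge 0$; since $X$ is a finite $d$-dimensional complex, $\beta_k(\bar X,F)=0$ for $k$ large, and summing the recursion downward from the top dimension telescopes to give \eqref{relative-aux}, namely $\beta_r(\bar X,F)=\sum_{k=r}^d(\beta_k(X)-\beta_k(F))$ for $r\le d$.

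Next, under the additional hypotheses — $d=2n$, $X$ a smooth closed manifold, $c$ a smooth involution, and every component of $F$ of dimension $n$ — the real locus $F$ has $\beta_k(F)=0$ for $k>n$. Plugging this into \eqref{relative-aux} immediately yields \eqref{relative-individual}: for $r\ge n+1$ the subtracted terms all vanish, so $\beta_r(\bar X,F)=\sum_{k=r}^{2n}\beta_k(X)$.

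For the total-Betti-number identity \eqref{relative-main}, I would sum \eqref{relative-aux} over all $r$ from $0$ to $2n$ and swap the order of summation: $\beta_*(\bar X,F)=\sum_{r=0}^{2n}\sum_{k=r}^{2n}(\beta_k(X)-\beta_k(F))=\sum_{k=0}^{2n}(k+1)(\beta_k(X)-\beta_k(F))$. Now I would invoke two symmetries. Poincar\'e duality on the closed $2n$-manifold $X$ gives $\beta_k(X)=\beta_{2n-k}(X)$, so $\sum_k (k+1)\beta_k(X)=\tfrac12\sum_k\big((k+1)+(2n-k+1)\big)\beta_k(X)=(n+1)\beta_*(X)$. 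Similarly, since each component of $F$ is a closed $n$-manifold, Poincar\'e duality on $F$ gives $\beta_k(F)=\beta_{n-k}(F)$, whence $\sum_k(k+1)\beta_k(F)=\tfrac{n+2}{2}\beta_*(F)$. Subtracting, $\beta_*(\bar X,F)=(n+1)\beta_*(X)-\tfrac{n+2}{2}\beta_*(F)$. Finally, maximality forces $\beta_*(F)=\beta_*(X)$ (the remark following Corollary \ref{cor-smith}), and substituting this collapses the expression to $\big((n+1)-\tfrac{n+2}{2}\big)\beta_*(X)=\tfrac{n}{2}\beta_*(X)$, which is \eqref{relative-main}.

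The only genuinely delicate point is the bookkeeping in the telescoping sum for \eqref{relative-aux}: one must be careful that the recursion $\beta_{k+1}(\bar X,F)=\beta_k(X)-\beta_k(F)+\beta_k(\bar X,F)$ is applied with the correct index range and that the boundary term $\beta_{d+1}(\bar X,F)=0$ (and more generally $\beta_k(\bar X,F)=0$ for $k>d$) is legitimately used to start the downward induction; everything after that is a short manipulation using the two Poincar\'e dualities and the equality $\beta_*(F)=\beta_*(X)$. I should also double-check the edge behaviour at $k=0$, where $H_0(\bar X,F)$ may be nonzero if $F=\varnothing$, but under the standing hypothesis that $F$ is a nonempty union of $n$-manifolds this causes no trouble.
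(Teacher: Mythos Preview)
Your proof is correct and follows essentially the same approach as the paper: both extract the recursion $\beta_{k+1}(\bar X,F)=\beta_k(\bar X,F)+\beta_k(X)-\beta_k(F)$ from Corollary~\ref{maxSmith}, telescope it to obtain \eqref{relative-aux}, read off \eqref{relative-individual} from the vanishing of $\beta_k(F)$ for $k>n$, and then sum \eqref{relative-aux} over $r$ and simplify via Poincar\'e duality on $X$ and on $F$ together with $\beta_*(F)=\beta_*(X)$. The only cosmetic difference is that the paper sums over $1\le r\le d$ (yielding the weight $r$) while you sum over $0\le r\le 2n$ (yielding the weight $k+1$); the extra constant term cancels in the subtraction, so the two computations are equivalent.
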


\begin{proof} According to Corollary \ref{maxSmith}, under maximality 
assumption, for every $k\geq 0$ we have $\beta_{k+1}(\bar X, F) +\beta_k(X)
=\beta_{k}(\bar X, F) +\beta_k(F).$ We obtain (\ref{relative-aux}) by summing 
up these equalities over all $k$ with $r\le k\le d$ and noticing that 
$\beta_{d+1}(\bar X, F) =0.$ In the manifold case, (\ref{relative-individual}) 
is a direct consequence of  (\ref{relative-aux}).

To prove (\ref{relative-main}), by adding the equalities (\ref{relative-aux}) 
over all $r$ with $1\le r\le d,$ we obtain
\begin{equation}
\label{relative-sum}
\beta_*(\bar X, F)=\sum_{r=0}^{2n} r(\beta_r(X)- \beta_r(F)).
\end{equation}
Due to Poincar\'e duality applied to both $X$ and $F$, we have$$
\sum_{r=0}^{2n}r \beta_r(X)=\sum_{r=0}^{2n}(2n-r)\beta_r(X)=2n\beta_*(X) 
-\sum_{r=0}^{2n}r \beta_r(X)
$$
and
$$
\sum_{r=0}^{2n}r \beta_r(F)=\sum_{r=0}^{n}r \beta_r(F)=
\sum_{r=0}^{n}(n-r)\beta_r(F)=n\beta_*(F)-
\sum_{r=0}^{n}r \beta_r(F),
$$
respectively. Hence, 
\begin{equation}
\label{sums}
\sum_{r=0}^{2n}r \beta_r(X)=n\beta_*(X), \quad \sum_{r=0}^{2n}r \beta_r(F)
=\frac{n}2\beta_*(F),
\end{equation}
and  (\ref{relative-main}) follows from
(\ref{relative-sum}), (\ref{sums}),  and the maximality assumption.
\end{proof}


\subsection{A non-vanishing result}


An ingredient needed for the proofs of results announced in Section \ref{intro}
is an observation of the first author \cite{Kh75} regarding the non-vanishing 
of the cohomology of real hypersurface. We include a short proof here for 
the convenience of the reader.

\begin{lemma}
\label{hyperplane-class} 
Let $X\subset \PP^N$ be a maximal complete intersection of dimension 
$n$ and $\upsilon\in H^1(X(\RR))$  the class of a hyperplane section
$H(\RR)\subset X(\RR)$. Then $\upsilon^r\neq 0$  for every 
$r \in \NN, r \leq [\frac{n}{2}].$
In particular, $\beta_r(X(\RR))\ge 1$ for every $ r \in \NN, 
r \leq [\frac{n}{2}]$.
\end{lemma}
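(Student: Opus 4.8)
The plan is to compare the $\FF_2$-cohomology of $X(\RR)$ with that of $X$ via the Smith inequality applied iteratively to hyperplane sections, exploiting the fact that maximality is inherited along the chain of complete intersections obtained by intersecting $X$ with generic real hyperplanes. First I would recall that for a complete intersection $X\subset\PP^N$ of dimension $n$, cutting with a generic real hyperplane $H$ produces a real complete intersection $Y=X\cap H$ of dimension $n-1$; I will need a Lefschetz-type statement ensuring that if $X$ is maximal then so is $Y$, or at least that the restriction map $H^*(X(\RR))\to H^*(Y(\RR))$ behaves well with respect to the hyperplane class $\upsilon$. Concretely, the hyperplane class $\upsilon_X\in H^1(X(\RR))$ restricts to the hyperplane class $\upsilon_Y\in H^1(Y(\RR))$, so $\upsilon_X^r\neq 0$ in $H^*(X(\RR))$ whenever $\upsilon_Y^{r-1}\neq 0$ in $H^*(Y(\RR))$ and the restriction $H^{r}(X(\RR))\to H^{r}(Y(\RR))$ is compatible with multiplication — reducing the claim for $X$ of dimension $n$ to the claim for $Y$ of dimension $n-1$.

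The second ingredient is the base case. After $[\frac n2]$ successive generic hyperplane sections we arrive at a real complete intersection $Z$ of dimension $n-[\frac n2]=\lceil \frac n2\rceil \ge 1$, and it suffices to show $\upsilon_Z\neq 0$ in $H^1(Z(\RR))$, i.e. that the hyperplane section $H(\RR)\cap Z(\RR)$ is not a boundary in $Z(\RR)$. For this I would use the classical argument: a hyperplane section $W(\RR)\subset Z(\RR)$ of a real variety of positive dimension represents a nonzero class in $H_{\dim Z-1}(Z(\RR);\FF_2)$ precisely because its complement $Z(\RR)\setminus W(\RR)$ is a real affine variety, hence has the $\FF_2$-homology of a CW-complex of dimension $\le \dim Z-1$ (a real affine variety of dimension $d$ has no homology above degree $d$, and more to the point, the link/tubular neighborhood argument shows $[W(\RR)]$ is Poincaré dual to $\upsilon_Z$), while an inductive use of the Smith inequality guarantees this class survives. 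Alternatively, and perhaps more cleanly, one can invoke that for a maximal $Z$ the Smith–Thom equality forces the total $\FF_2$-Betti number of $Z(\RR)$ to equal that of $Z$, and then a direct dimension count combined with the fact that $\upsilon_Z^{\dim Z}=[\text{point}]\neq 0$ on $Z$ pulls back appropriately.

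The cleanest route, which I would ultimately prefer, is to run the whole argument at once: let $H_1,\dots,H_r$ be generic real hyperplanes with $r\le[\frac n2]$, so that $X\cap H_1\cap\cdots\cap H_r$ is a real complete intersection of dimension $n-r\ge \lceil\frac n2\rceil\ge r$. Maximality of $X$ propagates to each successive section (this is the Smith-theoretic Lefschetz statement I would isolate as a lemma, using that the pair $(X, X\cap H_1)$ together with an affine complement gives, via the Smith sequence of Theorem \ref{Smith.seq}, a maximality-preserving operation for generic sections of complete intersections), and then $\upsilon_X^r\neq 0$ is equivalent to the statement that the iterated hyperplane section, a real submanifold of positive dimension $n-r$, carries a nonzero fundamental class in $X(\RR)$ — which follows because its $\FF_2$-Poincaré dual is $\upsilon_X^r$ and a positive-dimensional real complete intersection has nonvanishing top $\FF_2$-homology, its real locus being nonempty by maximality (indeed $\beta_0(X(\RR))\ge 1$ whenever $\beta_*(X(\RR))=\beta_*(X)>0$). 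The non-vanishing conclusion $\beta_r(X(\RR))\ge 1$ is then immediate since a nonzero class in $H^r(X(\RR))$ witnesses $\dim H^r(X(\RR);\FF_2)\ge 1$.

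\textbf{Main obstacle.}
The delicate point is the Lefschetz-type assertion that maximality is preserved under taking a generic real hyperplane section of a complete intersection; one must be careful that the generic section has real points and that the interplay between the Smith sequences of $X$ and of $X\cap H$ (mediated by the affine complement $X(\RR)\setminus H(\RR)$) does not lose maximality. I expect the bound $r\le[\frac n2]$ to enter precisely here, ensuring enough room — the dimension of the surviving section stays $\ge$ its codimension — so that the relevant restriction maps on $\FF_2$-cohomology remain injective in the degrees that matter and the power $\upsilon^r$ cannot die prematurely.
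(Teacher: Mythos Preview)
Your approach rests on a claim that is false: maximality is \emph{not} inherited by generic real hyperplane sections of complete intersections. Take for instance a maximal real quartic surface $X\subset\PP^3$; a generic hyperplane section $Y=X\cap H$ is a smooth plane quartic of genus $3$, and maximality of $Y$ would require $Y(\RR)$ to consist of four ovals. Nothing forces this: for a maximal K3 whose real locus is, say, a high-genus component together with several spheres, a hyperplane meeting only one component cuts out a curve with far fewer ovals than four. More generally, $\beta_*(Y)$ jumps when passing to a hyperplane section (the middle Betti number changes), and there is no Smith-theoretic mechanism making $\beta_*(Y(\RR))$ jump correspondingly. The weaker fallback you propose --- that restriction or Gysin maps ``behave well'' with respect to $\upsilon$ --- is precisely the content of the lemma and cannot be assumed. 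Your alternative, showing directly that the iterated section $Z(\RR)$ represents a nonzero class in $H_{n-r}(X(\RR))$, is likewise incomplete: nonemptiness of $Z(\RR)$ only gives $[Z(\RR)]\ne 0$ in $H_{n-r}(Z(\RR))$, not in $H_{n-r}(X(\RR))$, and the affine-complement argument does not bound the $\FF_2$-homology of a \emph{real} affine variety the way it does for complex ones.

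The paper's proof avoids hyperplane sections entirely. When $\deg X$ is odd, it is a one-line intersection computation: $(\upsilon^r\cup\upsilon^{n-r})\cap[X(\RR)]=\deg X\bmod 2\ne 0$. When $\deg X$ is even, the paper computes $\beta_*(\PP^N,X)$ exactly from the known integral cohomology of a complete intersection (powers of the hyperplane class generate $H^{2k}(X;\ZZ)$ for $2k\ne n$, with controlled divisibility), writes $\beta_*(\PP^N(\RR),X(\RR))$ in terms of the largest $\ell$ with $\upsilon^\ell\ne 0$, and then applies the \emph{relative} Smith inequality $\beta_*(\PP^N(\RR),X(\RR))\le\beta_*(\PP^N,X)$ together with the maximality hypothesis $\beta_*(X(\RR))=\beta_*(X)$ to force $\ell\ge[\tfrac n2]$. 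The bound $[\tfrac n2]$ enters here through the structure of $H^*(X;\ZZ)$, not through any inductive room in a chain of sections.
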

\begin{proof}[Proof \rm ({\it cf., \cite{Kh75}})]
Let $d$ be the degree of $X\subset \PP^N.$  If $d$ is odd, then 
$\upsilon^r\neq 0$ for every 
$0\le r\le n$, just because $(\upsilon^r\cup \upsilon^{n-r})\cap 
[X(\RR)]=d\mod 2.$\\
\indent If $d$ is even, the result is a straightforward consequence of the 
following properties:
\begin{itemize}
\item If $k\neq n$ is odd, then $H^k(X;\ZZ)$=0.
\item If $k$ is even and $n< k\le 2n$, then $H^k(X;\ZZ)$ is isomorphic 
to $\ZZ$ and generated by $h^{\frac{k}2}$
where $h\in H^2(X;\ZZ)$ is the class of hyperplane sections $H\subset X$.
\item If $k$ is even and $0\le k<n,$ then $H^k(X;\ZZ)$ is isomorphic to $\ZZ$ and 
$h^{\frac{k}2}$ is a $d$-multiple of a generator.
\item if $n$ is even, then $h^{\frac{n}2}$ is a primitive element of $H^n(X;\ZZ)$ 
(see, for example, \cite[Lemma 1.2]{Kh75}).
\end{itemize} 

Indeed, from the enumerated above properties and  exactness of the 
$\FF_2$-cohomology sequence of the pair $(\PP^N, X)$ it follows that 
\begin{equation*}
\beta_*(\PP^N, X) = \beta_*(\PP^N)+\beta_*(X) - 2 \left(\left[\frac{n}2\right] +1\right).
\end{equation*}
On the other hand, from the exactness of the $\FF_2$-cohomology sequence 
of the pair $(\PP^N(\RR), X(\RR))$ we get
\begin{equation*}
\beta_*(\PP^N(\RR), X(\RR)) = \beta_*(\PP^N(\RR))+\beta_*(X(\RR)) - 2 (\ell+1),
\end{equation*}
where $\ell$ is determined by the property that $\upsilon^r\ne 0$ for $r\le l$ 
and  $\upsilon^r= 0$ for $r>\ell.$ Now, it remains to apply the Smith inequality
in the relative setting (see \cite{bredon}, Chap. 3, Theorem 4.1),
\begin{equation*}
\beta_*(\PP^N(\RR), X(\RR)) \le \beta_*(\PP^N, X) 
\end{equation*}
and to use the maximality assumption, $\beta_*(X(\RR))= \beta_*(X)$.
\end{proof}


\subsection{Elementary computations}


The following computations are used in the proof of Theorem \ref{defect-odd}.

\begin{lemma}
\label{3together}
Let $X$ be a maximal real nonsingular projective variety of odd dimension $n$. 
Then, the following relations hold:
\begin{align}
\sum_{l=0}^{\frac{n-1}{2}}\sum_{i+j=2l}\beta_i(X(\RR))\beta_j(X(\RR))=&\frac14 \beta^2_*\, ,\label{A} \\
\sum_{l=0}^{\frac{n-1}{2}}\beta_{l}(X(\RR))=&\frac12\beta_*\, , \label{B}\\
\sum_{l=0}^{\frac{n-1}{2}}\sum_{i=0}^{2l-1}\beta_{i}(X(\RR))=&
\frac{n-1}{4}\beta_*.
\end{align}
\end{lemma}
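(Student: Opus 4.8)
The key observation is that for a maximal real nonsingular projective variety $X$ of odd dimension $n$, Poincaré duality holds for $X(\RR)$, so $\beta_i(X(\RR))=\beta_{n-i}(X(\RR))$, and the maximality condition gives $\beta_*(X(\RR))=\beta_*(X(\CC))=:\beta_*$. The plan is to treat all three identities by bookkeeping over the symmetry $i\mapsto n-i$ on the index set $\{0,1,\dots,n\}$, which pairs up the ``low'' range $\{0,\dots,\frac{n-1}{2}\}$ with the ``high'' range $\{\frac{n+1}{2},\dots,n\}$ bijectively and without overlap (since $n$ is odd). I will write $a_i:=\beta_i(X(\RR))$ throughout, with $a_i=a_{n-i}$ and $\sum_{i=0}^n a_i=\beta_*$.

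First I would prove $(B)$, i.e. $\sum_{l=0}^{(n-1)/2} a_l=\tfrac12\beta_*$. The full sum $\sum_{i=0}^n a_i=\beta_*$ splits as $\sum_{l=0}^{(n-1)/2} a_l + \sum_{l=(n+1)/2}^{n} a_l$; reindexing the second sum by $l\mapsto n-l$ and using $a_l=a_{n-l}$ shows it equals the first sum, so each equals $\tfrac12\beta_*$. Next, for the Künneth-type identity $(A)$, the starting point is that $\big(\sum_{i=0}^n a_i\big)^2 = \sum_{i,j} a_i a_j = \beta_*^2$, and more precisely $\beta_*(X(\RR)\times X(\RR)) = \beta_*^2$ while $\sum_{i,j} a_i a_j$ collects these products by total degree $i+j$, which ranges over $0,\dots,2n$. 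Since $n$ is odd, $2n$ is even, and the degrees $i+j\in\{0,2,4,\dots\}$ (even) versus odd total degrees partition things; but the cleaner route is to use the substitution symmetry $(i,j)\mapsto(n-i,n-j)$, which sends total degree $i+j=d$ to total degree $2n-d$ and is a fixed-point-free involution on the set of pairs because $i+j=n$ would force $d=n$ odd, impossible for a pair with $d$ even — wait, one must instead observe that the left side of $(A)$ sums $a_i a_j$ over pairs with $i+j=2l$ even and $2l\le n-1$, i.e. over pairs of even total degree in the ``lower half''. The involution $(i,j)\mapsto(n-i,n-j)$ carries $\{i+j \text{ even},\, i+j\le n-1\}$ bijectively onto $\{i+j\text{ even},\, i+j\ge n+1\}$, and the pairs with $i+j=n$ are absent (that total degree is odd). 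Hence the left side of $(A)$ is exactly half of $\sum_{\{(i,j):\,i+j\text{ even}\}} a_i a_j$. Then I would separately argue that $\sum_{\{i+j\text{ even}\}} a_i a_j = \sum_{\{i+j\text{ odd}\}} a_i a_j = \tfrac12\beta_*^2$: indeed the involution $j\mapsto n-j$ (acting on the second coordinate only) is a fixed-point-free bijection from $\{i+j\text{ even}\}$ onto $\{i+j\text{ odd}\}$ (fixed-point-free since $j=n-j$ is impossible for odd $n$) preserving the summand $a_ia_j=a_ia_{n-j}$. Combining, the left side of $(A)$ equals $\tfrac12\cdot\tfrac12\beta_*^2=\tfrac14\beta_*^2$, which is $(A)$.

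For the third identity, I would first simplify the inner sum: $\sum_{i=0}^{2l-1} a_i$. Write $S(m):=\sum_{i=0}^{m} a_i$, so the inner sum is $S(2l-1)$, and the claim is $\sum_{l=0}^{(n-1)/2} S(2l-1) = \tfrac{n-1}{4}\beta_*$ (with the convention $S(-1)=0$, handling the $l=0$ term). The natural move is to swap the order of summation: $\sum_{l=0}^{(n-1)/2} S(2l-1) = \sum_{l=1}^{(n-1)/2}\sum_{i=0}^{2l-1} a_i = \sum_{i=0}^{n-2} a_i\cdot\#\{l:\, 1\le l\le\tfrac{n-1}{2},\ 2l-1\ge i\} = \sum_{i=0}^{n-2} a_i\cdot\big(\tfrac{n-1}{2}-\lceil\tfrac{i+1}{2}\rceil+1\big)$, i.e. each $a_i$ is weighted by an explicit piecewise-linear coefficient $w_i$ in $i$. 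I would then compute $\sum_i w_i a_i$ using Poincaré duality $a_i=a_{n-i}$ to symmetrize: pairing the coefficient $w_i$ with $w_{n-i}$, one checks $w_i+w_{n-i}$ is constant (equal to $\tfrac{n-1}{2}$, or some such constant independent of $i$ — the exact value to be pinned down by a short computation with the floors/ceilings), so $\sum_i w_i a_i = \tfrac12(w_i+w_{n-i})\sum_i a_i$ gives the constant times $\tfrac12\beta_*$, yielding $\tfrac{n-1}{4}\beta_*$.

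\textbf{Main obstacle.} None of these steps is deep; the only place demanding care is the third identity, where the weights $w_i$ involve ceilings/floors and the boundary terms near $i=0$ and $i=n$ (and the missing $i=n-1,n$ terms in the range of summation, handled via $a_{n-1}=a_1$, $a_n=a_0$) must be tracked so that the symmetrization $w_i+w_{n-i}=\text{const}$ actually holds on the nose. This is purely a finite combinatorial verification, and $(A)$ and $(B)$ are immediate from the duality/Künneth bookkeeping above.
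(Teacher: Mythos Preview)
Your proposal is correct and follows essentially the same route as the paper: identity (B) via the involution $l\mapsto n-l$, identity (A) by first doubling the sum via $(i,j)\mapsto(n-i,n-j)$ to reach the full even-degree convolution, and the third identity by swapping the order of summation and then symmetrizing the resulting ceiling-weights under $i\leftrightarrow n-i$ to find the constant $\tfrac{n-1}{2}$. The only cosmetic difference is in the second step of (A): the paper recognises $\sum_{k=0}^{n}\sum_{a+b=2k}\beta_a\beta_b=\beta_{\rm even}^2(X(\RR))+\beta_{\rm odd}^2(X(\RR))$ and then invokes $\beta_{\rm even}(X(\RR))=\beta_{\rm odd}(X(\RR))=\tfrac12\beta_*$, whereas you use the single-coordinate involution $j\mapsto n-j$ to match even and odd total-degree sums directly; these are equivalent uses of the same parity flip.
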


\begin{proof} By Poincar\'e duality, we find:
\begin{align*}
2\sum_{l=0}^{\frac{n-1}{2}}&\,\sum_{i+j=2l}\beta_i(X(\RR))\beta_j(X(\RR))\\ 
= &\, \sum_{l=0}^{\frac{n-1}{2}}\sum_{i+j=2l}\beta_i(X(\RR))\beta_j(X(\RR))
+\sum_{l=0}^{\frac{n-1}{2}}\sum_{i+j=2l}\beta_{n-i}(X(\RR))\beta_{n-j}(X(\RR))\\
=&\, \sum_{l=0}^{\frac{n-1}{2}}\sum_{i+j=2l}\beta_i(X(\RR))\beta_j(X(\RR)) 
+\sum_{l'=\frac{n+1}{2}}^{n}\sum_{i'+j'=2l'}\beta_{i'}(X(\RR))\beta_{j'}(X(\RR))\\
=&\, \sum_{k=0}^n\sum_{a+b=2k}\beta_a(X(\RR))\beta_b(X(\RR))\\
=&\,\beta^2_{\rm even}(X(\RR))+\beta^2_{\rm odd}(X(\RR)).
\end{align*}
Since $X(\RR)$ is an odd-dimensional manifold, using again the Poincar\'e duality 
we see that $\beta_{\rm odd}(X(\RR))=\beta_{\rm even}(X(\RR)),$ while 
by the maximality of $X$ we have $\beta_{\rm odd}(X(\RR))+\beta_{\rm even}
(X(\RR))=\beta_*.$ 
Therefore 
$$
\beta_{\rm odd}(X(\RR))=\beta_{\rm even}(X(\RR))=\frac12 \beta_*,
$$ 
wherefrom (\ref{A})  follows immediately. 

\smallskip

Again by Poincar\'e duality, we have 
\begin{align*}
2\sum_{l=0}^{\frac{n-1}{2}}\beta_{l}(X(\RR))=&\, \sum_{l=0}^{\frac{n-1}{2}}\beta_{l}(X(\RR))
+\sum_{l=0}^{\frac{n-1}{2}}\beta_{n-l}(X(\RR))\\
=&\, \sum_{l=0}^{\frac{n-1}{2}}\beta_{l}(X(\RR))+\sum_{l'=\frac{n+1}{2}}^{n}\beta_{l'}(X(\RR))\\
=&\, \beta_*(X(\RR)).
\end{align*}
This implies (\ref{B}) due to the maximality of $X$.

\smallskip

A direct computation shows 
\begin{equation}
\label{reduction-E}
\sum_{l=0}^{\frac{n-1}{2}}\sum_{i=0}^{2l-1}\beta_{i}(X(\RR))=
\sum_{j=0}^{n}\left\lceil\frac{n-1-j}{2}\right\rceil\beta_{j}(X(\RR)).
\end{equation}
Arguing as before, by Poincar\'e duality, we obtain
\begin{align}
\label{PD-E}
2\sum_{j=0}^{n}&\, \left\lceil\frac{n-1-j}{2}\right\rceil\beta_{j}(X(\RR))\notag \\
=&\, \sum_{j=0}^{n}\left\lceil\frac{n-1-j}{2}\right\rceil\beta_{j}(X(\RR))+
\sum_{j=0}^{n}\left\lceil\frac{n-1-j}{2}\right\rceil\beta_{n-j}(X(\RR))\notag \\
=&\, \sum_{j=0}^{n}\left\lceil\frac{n-1-j}{2}\right\rceil\beta_{j}(X(\RR))+
\sum_{i=0}^{n}\left\lceil\frac{i-1}{2}\right\rceil\beta_{i}(X(\RR))\notag \\
=&\, \sum_{a=0}^{n}\left(\left\lceil\frac{n-1-a}{2}\right\rceil+
\left\lceil\frac{a-1}{2}\right\rceil\right)\beta_a(X(\RR))\notag\\
=&\, \frac{n-1}{2} \sum_{a=0}^{n}\beta_a(X(\RR)).
\end{align}
The conclusion of the lemma follows now from (\ref{reduction-E}), (\ref{PD-E}) 
and the maximality of $X.$
\end{proof}

When $X$ is even dimensional, we need similar identities. We will only state them, 
as the proof is elementary and follows the one in the odd dimensional case.

\begin{lemma}
\label{3together-even}
Let $X$ be a maximal real nonsingular projective variety of even dimension $n$. 
Then, the following relations hold:
\begin{align}
\sum_{l=1}^{\frac{n}{2}}\sum_{\substack{a+b=2l-1\\a<b}}\beta_a(X(\RR))\beta_b(X(\RR))=&\,
\frac{1}{2}\beta_{\rm even}(X(\RR))\beta_{\rm odd}(X(\RR)) \,,\label{A-even} \\
\sum_{l=1}^{\frac{n}{2}}\sum_{i=0}^{2l-2}\beta_{i}(X(\RR))=&\,
\frac{n}{4}\beta_*-\frac12\beta_{\rm odd}(X(\RR)). \qed
\end{align}
\end{lemma}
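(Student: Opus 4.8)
The plan is to mirror the proof strategy of Lemma \ref{3together}, adapting each step to the even-dimensional setting where $X(\RR)$ is an $n$-manifold with $n$ even. First I would establish the basic distributional facts about the $\FF_2$-Betti numbers of $X(\RR)$ under the maximality hypothesis. Since $X$ is maximal we have $\beta_*(X(\RR))=\beta_*(X)=\beta_*$, but unlike the odd-dimensional case Poincar\'e duality on $X(\RR)$ no longer forces $\beta_{\rm even}(X(\RR))=\beta_{\rm odd}(X(\RR))$; instead the symmetry $\beta_k(X(\RR))=\beta_{n-k}(X(\RR))$ permutes even and odd indices among themselves when $n$ is even, so $\beta_{\rm even}(X(\RR))$ and $\beta_{\rm odd}(X(\RR))$ are each invariant but need not be equal. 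This is the structural reason the right-hand sides of (\ref{A-even}) and of the second identity involve $\beta_{\rm odd}(X(\RR))$ explicitly rather than just $\beta_*$.

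For the identity (\ref{A-even}), I would follow the symmetrization trick verbatim: double the left-hand side, apply Poincar\'e duality $\beta_a(X(\RR))=\beta_{n-a}(X(\RR))$, $\beta_b(X(\RR))=\beta_{n-b}(X(\RR))$ to the second copy, and observe that as $(a,b)$ with $a+b=2l-1$, $a<b$, $1\le l\le \tfrac n2$ runs over its range, the shifted indices $(n-b,n-a)$ with $(n-b)+(n-a)=2n-2l+1$ sweep out exactly the complementary range $\tfrac n2 < l' \le n$, again with strict inequality between the two indices. Summing the two contributions gives $\sum_{k}\sum_{\substack{a+b=2k-1\\ a<b}}\beta_a(X(\RR))\beta_b(X(\RR))$ over all odd total degrees $2k-1$ with $1\le k\le n$, which is precisely $\beta_{\rm even}(X(\RR))\,\beta_{\rm odd}(X(\RR))$ once one notes that every pair $\{a,b\}$ with $a+b$ odd consists of one even and one odd index and is counted once by the $a<b$ convention. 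Dividing by $2$ yields (\ref{A-even}); note no maximality input is even needed here, only Poincar\'e duality on $X(\RR)$.

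For the second identity I would first perform the purely combinatorial rewriting
\[
\sum_{l=1}^{\frac{n}{2}}\sum_{i=0}^{2l-2}\beta_{i}(X(\RR))=\sum_{j=0}^{n}c_j\,\beta_j(X(\RR)),
\qquad c_j=\#\{\, l : 1\le l\le \tfrac n2,\ 2l-2\ge j\,\}=\left\lceil\frac{n-j}{2}\right\rceil,
\]
analogous to (\ref{reduction-E}). Then I would symmetrize as in (\ref{PD-E}): doubling and applying Poincar\'e duality replaces the weight $c_j$ by $\tfrac12(c_j+c_{n-j})$, and a short case check on the parity of $j$ gives $\left\lceil\frac{n-j}{2}\right\rceil+\left\lceil\frac{j}{2}\right\rceil = \tfrac n2$ when $j$ is even and $\tfrac n2 + 1$ when $j$ is odd (here is where the even parity of $n$ matters and where the asymmetry between even and odd degrees enters). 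Hence $2\sum_j c_j\beta_j(X(\RR)) = \tfrac n2\,\beta_*(X(\RR)) + \beta_{\rm odd}(X(\RR))$, and dividing by $2$ together with the maximality identity $\beta_*(X(\RR))=\beta_*$ yields $\tfrac n4\beta_* - \tfrac12\beta_{\rm odd}(X(\RR))$ after correcting the sign — more precisely the weight sum gives $\tfrac n4\beta_* + \tfrac12\beta_{\rm odd}(X(\RR))$ before one recalls $\beta_{\rm odd}(X(\RR))$ may be replaced using $\beta_{\rm even}+\beta_{\rm odd}=\beta_*$; I would keep careful track of this to land on the stated right-hand side.

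The only genuine obstacle is bookkeeping: getting the ceiling-function weights $c_j$ and their reflected sums exactly right, and making sure the index range $1\le l\le n/2$ with inner sum up to $2l-2$ transforms correctly under reflection (the "$-2$" versus the "$-1$" of the inner bound is exactly what shifts parities compared to Lemma \ref{3together}). Since the paper explicitly states "the proof is elementary and follows the one in the odd dimensional case," I expect no conceptual difficulty beyond this careful parity accounting.
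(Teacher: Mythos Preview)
Your approach is exactly what the paper intends: it omits the proof entirely, saying only that it ``is elementary and follows the one in the odd dimensional case,'' and your plan to symmetrize via Poincar\'e duality on $X(\RR)$ is precisely that. The argument for (\ref{A-even}) is correct as written.

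There is, however, a concrete slip in the second identity that explains the sign confusion you flag at the end. The weight is
\[
c_j=\#\{\,l:1\le l\le \tfrac n2,\ 2l-2\ge j\,\}=\left\lfloor\frac{n-j}{2}\right\rfloor,
\]
not $\lceil (n-j)/2\rceil$. (Check $n=4$: the double sum is $2\beta_0+\beta_1+\beta_2$, so $c_1=1$, whereas $\lceil 3/2\rceil=2$.) With the correct floor, one gets $c_j+c_{n-j}=\tfrac n2$ for $j$ even and $\tfrac n2-1$ for $j$ odd, hence
\[
2\sum_j c_j\,\beta_j(X(\RR))=\tfrac n2\,\beta_*(X(\RR))-\beta_{\rm odd}(X(\RR)),
\]
and the stated right-hand side $\tfrac n4\beta_*-\tfrac12\beta_{\rm odd}(X(\RR))$ drops out directly, with no sign to ``correct'' and no need to invoke $\beta_{\rm even}+\beta_{\rm odd}=\beta_*$. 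Once you fix this floor/ceiling swap, your proof goes through cleanly.
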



\section{Cut-and-Paste construction of Hilbert squares over the reals}
\label{cut-paste}


For smooth varieties a simple, well known, construction of the Hilbert square 
consists in the following. Given a smooth variety $X$, one lifts the involution 
$\tau : X\times X\to X\times X$ permuting the factors to an involution $Bl(\tau)$ 
on the blowup $\Bl_\Delta (X\times X)$ of  $X\times X$ along the diagonal 
$\Delta\subset X\times X.$ The quotient of $\Bl_\Delta (X\times X)$ by $\Bl(\tau)$  
is then naturally isomorphic to the Hilbert square $X^{[2]}.$ By construction, 
the  branch locus $E\subset X^{[2]}$ of the double ramified covering 
$\Bl_\Delta (X\times X)\to X^{[2]}$ coincides with the exceptional divisor of the 
blowup, and, since the normal vector bundle of  $\Delta$ in 
$X\times X$ is isomorphic to the tangent vector bundle $TX$ of $X$, both this 
exceptional divisor and $E$ are naturally isomorphic to $\PP(T^*X).$\footnote{In 
agreement with \cite{square}, which we use as reference, we follow the Grothendieck 
convention according to which the projectivization of a vector space is the space 
of its hyperplanes.} In other words, a point of $E$ is identified with a point of $X$ plus 
a complex line in the tangent space at that point. Unordered pairs of distinct points in $X$ represent 
the points of the complement, $X^{[2]}\setminus E.$ Notice also that the normal 
bundle of $E$ in $X^{[2]}$ is naturally isomorphic to the square of the tautological 
bundle over $\PP(T^*X)$.

\smallskip

This construction is independent on the choice of the ground field. Applying it to a 
smooth variety $X$ defined over the reals, it equips $X^{[2]}$ with a naturally induced real 
structure, still denoted by $\Conj,$ that acts on points in $X^{[2]}$ represented by pairs of 
distinct points in $X$ by sending a pair to the complex conjugate one, while the points represented 
by a point of $X$ with a tangent line are sent to the conjugate point with the conjugate 
line. Thus, $X^{[2]}(\RR)=X/\Conj$ if $X(\RR)$ is empty, and otherwise $X(\RR)$ 
has the following properties:
\begin{itemize}
\item $E(\RR)\subset X^{[2]}(\RR)$ is naturally diffeomorphic to $\PP_\RR(T^*X(\RR))$ 
and has codimension 1.
\item The complement, $X^{[2]}(\RR)\setminus E(\RR)$, is a disjoint union 
of $ (X/\Conj) \setminus X(\RR)$, $F_i^{(2)}\setminus \Delta F_i$ ($1\le i\le r$), 
and $F_i\times F_j$ ($1\leq i<j\leq r$) where $F_1,\dots, F_r$ are the connected 
components of $X(\RR)$.
\item Each of the connected components $\PP_\RR(T^*F_i)$ of $E(\RR)$ is a 
common boundary of $ (X/\Conj) \setminus X(\RR)$ with  $F_i^{(2)}\setminus \Delta F_i$.
\end{itemize}

From this we can conclude that $X^{[2]}(\RR)$  is a disjoint union of connected components
$$
 X^{[2]}(\RR)=X^{[2]}_{\text{main}}(\RR) \bigsqcup X^{[2]}_{\text{extra}}(\RR),\quad  X^{[2]}_{\text{extra}}(\RR)
 = \bigsqcup_{1\leq i<j\leq r} \left(F_i\times F_j\right)
$$
where $X^{[2]}_{\text{main}}(\RR)$  is the component of  $X^{[2]}(\RR)$ that contains 
$E(\RR)$ in such a way that $E(\RR)$ divides $X^{[2]}_{\text{main}}(\RR)$ in $r+1$ 
submanifolds with boundary: 
$$
X^{[2]}_{\text{main}}(\RR)= \bigcup_{i=0}^{r} \HH_i, 
$$
where 
\begin{align*}
\partial \HH_{0} = E(\RR),& \quad  \interior \HH_{0}\cong (X/\Conj) \setminus X(\RR) \\
\partial \HH_{i}=\PP_\RR(T^*F_i),& \quad  \interior \HH_{i}\cong F_i^{(2)}\setminus \Delta F_i, i=1,\dots, r.
\end{align*}
Here $\Delta F_i$ is the diagonal in $F_i^{(2)},$ for every $i=1,\dots, r.$ Each manifold
$\HH_i,\,  i=1,\dots,r$ is glued to $\HH_0$ along their common boundary
$\PP_\RR( T^*F_i )\subset \PP_\RR(T^*X(\RR)).$
For convenience, let $\displaystyle \HH=\sqcup_{i=1}^r \HH_i,$ and notice that $\partial\HH=E(\RR).$

\smallskip

Denote by
$$
\inc_0:E(\RR)=\partial \HH_0\ra \HH_0,
\quad 
\inc: E(\RR)=\partial (\sqcup_{i=1}^r \HH_i)\ra \bigsqcup_{i=1}^r \HH_i
$$ 
the inclusion maps. Let $\inc_{0}^k$ and $\inc^k$ be the induced maps between the 
$k^{th}$ cohomology groups, and $\inc_0^*$ and $\inc^*$ denote the induced maps 
$$
\inc_{0}^*:  H^*(\HH_0)\ra H^*(E(\RR))\quad{\text{and}}\quad  \inc^{*}: H^*(\HH)\ra H^*(E(\RR)),
$$ 
respectively, where
$$
H^*(E(\RR))=\bigoplus_{i\geq0}
H^i(E(\RR)) \quad{\text{and}} \quad H^*(\HH)= \bigoplus_{i\geq0}
H^i(\HH).
$$
We will write the corresponding induced maps in homology by lowering the degree index. 
We let $\displaystyle \mu =(\inc_{0},\inc) : E(\RR)\ra \HH_0\sqcup(\sqcup_{i=1}^r \HH_i)$ 
and use similarly formed notations for the induced maps in cohomology and homology.


\subsection{Compatibility of characteristic classes}\label{compatibility}


A natural bijection between the set of double coverings of a topological space $M$ and 
$H^1(M)$ consists in taking the first Stiefel-Whitney class of the associated real rank one  
vector bundle. Given a double covering $\pi: N\ra M,$ the corresponding cohomology class 
$b\in H^1(M)$ is called the {\it characteristic class of the covering}.

\smallskip

In our context, we denote by $b_0 \in H^1(\HH_0\setminus \partial \HH_0)=H^1(\HH_0)$ the 
characteristic class of the double covering $X\setminus X(\RR)\ra \HH_0\setminus\partial \HH_0,$ 
while  $b\in H^1(X(\RR)^{(2)}\setminus \Delta X(\RR))$ is the characteristic class of the double 
covering $X(\RR)\times X(\RR)\setminus \Delta X(\RR)\ra X(\RR)^{(2)}\setminus \Delta X(\RR).$ 
Notice that this double covering is trivial when restricted to the connected components 
$F_i\times F_j$  of $X(\RR)^{(2)}\setminus \Delta X(\RR)$  with $i\neq j.$ Accordingly, the 
restriction of $b$ to these components is trivial, and  $b$ can be viewed as an element of 
$H^1(\HH\setminus \partial \HH)=H^1(\HH).$ Under this identification, we see that 
$b=(b_1,\dots,b_r),$ where $b_i\in H^i(F_i^{(2)}\setminus \Delta F_i)=H^1(\HH_i)$ is the 
characteristic class of the the double covering 
$F_i\times F_i\setminus \Delta F_i\ra F_i^{(2)}\setminus \Delta F_i,$ for every $i=1,\dots,r.$

\smallskip

Let $\gamma\in H^1(E(\RR))$ denote the first Stiefel-Whitney class of the tautological 
line bundle of $E(\RR)=\PP_\RR(T^*X(\RR)).$ Due to the cut-and-paste construction 
and naturality of the characteristic classes, we have the following compatibility relations:
\begin{equation}
\label{compat-char-classes}
\inc_0^1(b_0)=\gamma=\inc^1(b).
\end{equation}


\subsection{The Betti numbers of $\HH_i$}


Here, we compute the Betti numbers of the strata $\HH_i$ of $X^{[2]}_{\rm main}(\RR)$ and 
the rank of the corresponding inclusion maps induced in homology/cohomology. To simplify 
the notation, throughout the rest of the paper  $\beta_k(X)=\beta_k(X(\CC))$ will always be 
denoted by $\beta_k,$ and, similarly,  we will use the notation $\beta_*$ for 
$\beta_*(X)=\beta_*(X(\CC)).$

\begin{lem}
\label{dimensions-H0}
If $X$ is a maximal $n$-dimensional real nonsingular projective variety, then 
the following formulas hold:
\begin{itemize}
\item[ 1)] $\displaystyle \beta_k(\HH_0)=\sum_{i=2n-k}^{2n} \beta_i,$ for every integer $0\leq k\leq n-1.$
\item[ 2)] $\displaystyle \beta_*(\HH_0)=\frac{n}{2}\beta_*.$
\end{itemize}
\end{lem}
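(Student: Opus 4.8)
The plan is to reduce both formulas to the computation, carried out in Lemma \ref{RelativeQuotient} under the maximality assumption, of the $\FF_2$-Betti numbers of the relative homology $H_*(\bar X, X(\RR))$ of the quotient $\bar X = X/\Conj$. The bridge between $\HH_0$ and the pair $(\bar X, X(\RR))$ will be Poincar\'e--Lefschetz duality, once $\HH_0$ is recognized as a compact $2n$-dimensional manifold with boundary $E(\RR)$ whose interior is $\bar X\setminus X(\RR) = (X/\Conj)\setminus X(\RR)$.

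First I would set up the topological picture, following Section \ref{cut-paste}. Write $F = X(\RR)$, let $N$ be the real normal bundle of $F$ in $X$ (on which $\Conj$ is fibrewise linear and acts by $-1$), and choose a $\Conj$-invariant closed tubular neighborhood $U\cong D(N)$ of $F$ in $X$. Passing to quotients: $U/\Conj$ deformation retracts onto $F$; its boundary $\partial U/\Conj = S(N)/\{\pm1\}$ is canonically $\PP_\RR(N) = E(\RR)$; and $(X\setminus \interior U)/\Conj$ is a compact $2n$-manifold with boundary $E(\RR)$, since $\Conj$ acts freely off $F$. The cut-and-paste description of $X^{[2]}_{\text{main}}(\RR)$ identifies this last manifold with $\HH_0$ (one pushes in the half-open collar $(U\setminus F)/\Conj \cong E(\RR)\times(0,1]$), so that $\bar X = \HH_0 \cup_{E(\RR)} (U/\Conj)$.

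Next, I would compare the relative groups. Since $U/\Conj$ deformation retracts onto $F$, one has $H_*(\bar X, F)\cong H_*(\bar X, U/\Conj)$, and excision of a slightly smaller tubular neighborhood gives $H_*(\bar X, U/\Conj)\cong H_*(\HH_0,\partial\HH_0)$ (the pair $(\bar X, F)$ is a good pair even though $\bar X$ is not a manifold along $F$ when $n\ge 2$). Poincar\'e--Lefschetz duality for the compact $2n$-manifold with boundary $\HH_0$ (with $\FF_2$-coefficients, so orientability is automatic) then yields, for all $0\le j\le 2n$,
$$
\beta_j(\HH_0)=\beta_{2n-j}(\HH_0,\partial\HH_0)=\beta_{2n-j}(\bar X,F).
$$
Now I invoke Lemma \ref{RelativeQuotient}, whose hypotheses hold here ($X$ is maximal, closed, smooth of even real dimension $2n$, with $F$ purely $n$-dimensional). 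For $0\le k\le n-1$ we have $2n-k\ge n+1$, so (\ref{relative-individual}) gives $\beta_k(\HH_0)=\beta_{2n-k}(\bar X,F)=\sum_{i=2n-k}^{2n}\beta_i$, which is assertion 1). Summing $\beta_j(\HH_0)=\beta_{2n-j}(\bar X,F)$ over $0\le j\le 2n$ and applying (\ref{relative-main}) gives $\beta_*(\HH_0)=\beta_*(\bar X,F)=\tfrac n2\beta_*$, which is assertion 2).

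The step I expect to require the most care is the first one: verifying that the stratum $\HH_0$ produced by the cut-and-paste construction is genuinely a smooth compact manifold with boundary $E(\RR)$, properly collared, with interior $(X/\Conj)\setminus X(\RR)$ — and, relatedly, that $(\bar X, F)$ is a sufficiently good pair for excision to apply despite $\bar X$ failing to be a manifold along $F$. Everything after that is formal duality together with Lemma \ref{RelativeQuotient}.
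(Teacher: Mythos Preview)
Your proposal is correct and follows essentially the same approach as the paper: Poincar\'e--Lefschetz (in the paper, Poincar\'e--Alexander--Lefschetz) duality gives $\beta_k(\HH_0)=\beta_{2n-k}(X/\Conj,X(\RR))$, and then Lemma~\ref{RelativeQuotient} finishes both parts. The paper compresses the topological setup you carefully unpack (identifying $\HH_0$ as a compact manifold with boundary and establishing the excision step) into the single phrase ``Poincar\'e--Alexander--Lefschetz duality''; your detailed justification of that step is entirely in line with what the paper takes for granted.
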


\begin{proof} By Poincar\'e-Alexander-Lefschetz duality, we find
$$
\beta_r(\HH_0)=\beta_{2n-r}(X/\Conj,X(\RR)),
$$ 
and the proof of the claims follow
from applying Lemma \ref{RelativeQuotient} to the pair $(X,\Conj).$
\end{proof}

\begin{lem}
\label{inc-zero}
If $X$ is a maximal $n$-dimensional real nonsingular projective variety, then
$$
\rank (\inc_{0}^{k})=\sum_{i=0}^{k}\beta_i
$$
for every $k<n$.
\end{lem}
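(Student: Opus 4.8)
The plan is to identify $\HH_0$ up to homotopy and then compute the map $\inc_0^k\colon H^k(\HH_0)\to H^k(E(\RR))$ directly. Recall from the cut-and-paste description that $\interior\HH_0\cong (X/\Conj)\setminus X(\RR)$, and that this open manifold is the base of the double covering $X\setminus X(\RR)\to\interior\HH_0$; in fact $(X\setminus X(\RR))/\Conj=\interior\HH_0$ and the covering is classified by $b_0\in H^1(\HH_0)$. The boundary $\partial\HH_0=E(\RR)=\PP_\RR(T^*X(\RR))$ is a collar, so $H^*(\HH_0)\cong H^*(\interior\HH_0)$. The first step is therefore to understand $H^*((X\setminus X(\RR))/\Conj)$. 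Since $X\setminus X(\RR)$ is a free $\ZZ/2$-space with quotient $\interior\HH_0$, one can use the Smith/Gysin sequence for this free involution, or, more directly, the fact (standard in this circle of ideas, {\it cf.} \cite{bredon}) that for the complement of the fixed locus the relevant cohomology is governed by the $\ZZ/2$-equivariant cohomology of $X$, which for $X$ maximal collapses nicely.

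The key computational step is to relate $H^k(\HH_0)$ and the map $\inc_0^k$ to the homology of the pair $(X/\Conj, X(\RR))$ already used in Lemma~\ref{dimensions-H0}, together with the excision/Alexander duality identifications. Concretely, I would use that $\HH_0$ deformation retracts onto a manifold with boundary whose double along $E(\RR)$ is built from $X$, and push the computation through the long exact sequence of the pair $(\HH_0,\partial\HH_0)=(\HH_0,E(\RR))$:
\begin{equation*}
\cdots\to H^k(\HH_0,E(\RR))\to H^k(\HH_0)\xrightarrow{\inc_0^k} H^k(E(\RR))\to H^{k+1}(\HH_0,E(\RR))\to\cdots
\end{equation*}
By Poincar\'e--Lefschetz duality $H^k(\HH_0,E(\RR))\cong H_{2n-k}(\HH_0)$ and $H^k(\HH_0)\cong H_{2n-k}(\HH_0,E(\RR))=H_{2n-k}(X/\Conj,X(\RR))$, which by Lemma~\ref{RelativeQuotient} applied to $(X,\Conj)$ has dimension $\sum_{i\ge 2n-k}\beta_i$ (for $2n-k\ge n+1$, i.e. $k<n$) — matching Lemma~\ref{dimensions-H0}(1). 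So the ranks of all groups in the sequence are known in the range $k<n$, and it remains to see that the connecting maps vanish, equivalently that $\inc_0^k$ is surjective in this range. Surjectivity should follow because $E(\RR)=\PP_\RR(T^*X(\RR))$ is a projective bundle, $H^*(E(\RR))$ is a free module over $H^*(X(\RR))$ on $1,\gamma,\dots,\gamma^{n-1}$ by Leray--Hirsch, and the bundle projection $E(\RR)\to X(\RR)$ factors through $\HH_0$ up to homotopy in low degrees while $\gamma=\inc_0^1(b_0)$ by \eqref{compat-char-classes}; hence all classes $c\cup\gamma^j$ with $c$ pulled back from $X(\RR)$ and $j<n$ that can appear in degree $k<n$ are hit. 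Combining surjectivity with the known dimensions, $\rank(\inc_0^k)=\dim H^k(E(\RR))-\dim H^k(\HH_0,E(\RR))=\dim H^k(E(\RR))-\sum_{i\ge 2n-k}\beta_i$; a bookkeeping identity using $\beta_*(E(\RR))=n\cdot\beta_*(X(\RR))=n\cdot\beta_*$ (maximality of $X$) and Poincar\'e duality on $X(\RR)$ then rewrites this as $\sum_{i=0}^{k}\beta_i$.

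I expect the main obstacle to be the surjectivity claim for $\inc_0^k$ in the range $k<n$ — equivalently the vanishing of the connecting homomorphism $H^k(E(\RR))\to H^{k+1}(\HH_0,E(\RR))$. The dimension count alone only gives $|\rank\inc_0^k - \text{(target)}|$ up to the rank of the connecting map, so one genuinely needs a geometric input: either that every Leray--Hirsch generator $\upsilon^a\gamma^b$ of $H^k(E(\RR))$ of low degree extends over $\HH_0$ (using that $\HH_0$ contains, up to homotopy, the normal disk bundle collar together with the ``half'' of $X$, so that $b_0$ and the pullbacks of $H^*(X(\RR))$-classes restrict to $\gamma$ and $\upsilon$ respectively), or a direct comparison with the Smith sequence of the free involution on $X\setminus X(\RR)$. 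I would settle this by showing the composite $X(\RR)\hookrightarrow \HH_0$ (as the "large diagonal at infinity" zero section, or via a retraction) induces on $H^{<n}$ a map under which the projective bundle structure is compatible, so that $\inc_0^*$ is a surjection of $H^*(X(\RR))$-modules onto the truncation $\bigoplus_{k<n}H^k(E(\RR))$. The remaining numerology is routine given Lemmas~\ref{RelativeQuotient} and~\ref{dimensions-H0}.
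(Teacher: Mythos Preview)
Your approach has a genuine gap: the surjectivity of $\inc_0^k$ that you aim to establish is \emph{false} in general. Already at $k=0$, the source $H^0(\HH_0)$ is one--dimensional (Lemma~\ref{dimensions-H0}), whereas $H^0(E(\RR))$ has dimension equal to the number $r$ of components of $X(\RR)$; so $\inc_0^0$ is not surjective whenever $X(\RR)$ is disconnected. More decisively, even with $X(\RR)$ connected the claim fails: for a maximal cubic surface ($n=2$) one has $\dim H^1(\HH_0)=\beta_0+\beta_1=1$, while $\dim H^1(E(\RR))=\beta_0(X(\RR))+\beta_1(X(\RR))=8$. In fact, since $\dim H^k(\HH_0)=\sum_{i=0}^k\beta_i$ for $k<n$, the statement of the lemma is exactly that $\inc_0^k$ is \emph{injective}, not surjective. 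Correspondingly, the connecting maps in the long exact sequence of $(\HH_0,E(\RR))$ do not vanish; rather, they are surjective. Your geometric heuristic also breaks down: there is no map $\HH_0\to X(\RR)$ through which the projection $E(\RR)\to X(\RR)$ factors (the interior of $\HH_0$ is $(X/\Conj)\setminus X(\RR)$, which does not retract onto $X(\RR)$), so classes of $H^*(X(\RR))$ cannot be ``pulled back'' to $\HH_0$ as you suggest.

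The paper uses the very same long exact sequence of the pair $(\HH_0,\partial\HH_0)$ you wrote down, but instead of trying to kill the connecting maps it extracts from it the two--term identity
\[
\rank(\inc_0^{k-1})+\rank(\inc_0^{k})
=\beta_{k-1}(E(\RR))+\beta_k(\HH_0)-\beta_k(\HH_0,\partial\HH_0),
\]
evaluates the right--hand side via Lemma~\ref{RelativeQuotient} (this is where maximality of $X$ enters, yielding $\sum_{i=0}^{k-1}\beta_i+\sum_{i=2n-k}^{2n}\beta_i$), and then runs a straightforward induction starting from $\rank(\inc_0^0)=\beta_0$. Your dimension bookkeeping and duality identifications are correct and feed directly into this argument; it is only the surjectivity step that must be replaced by the recursion.
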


\begin{proof}
We argue by induction, and proceed by noticing first that $\rank (\inc_{0}^{0})=\beta_{0}.$ 
Indeed, from the short exact sequence 
$$
0\ra H^0(\HH_0,\partial \HH_0)\ra H^0(\HH_0)\ra\im (\inc_0^0)\ra 0,
$$
and Poincar\'e-Alexander-Lefschetz duality, we obtain 
\begin{align*}
\rank(\inc_0^0)=&\, \beta_0(\HH_0)-\beta_0(\HH_0,\partial \HH_0)\\
=&\, \beta_0(X/\Conj\setminus X(\RR))-\beta_0(X/\Conj, X(\RR))\\
=&\, \beta_{2n}(X/\Conj,X(\RR))-\beta_0(X/\Conj, X(\RR)).
\end{align*}
Since $X$ is maximal, from (\ref{relative-individual}) we obtain 
$\beta_{2n}(X/\Conj,X(\RR))=\beta_{2n}$ 
while $\beta_0(X/\Conj, X(\RR))=0.$
Therefore $\rank (\inc_{0}^{0})=\beta_{2n}=\beta_0.$

Consider next the cohomology long exact sequence of the pair 
$(\HH_0,\partial \HH_0).$ We find 
\begin{equation}
\label{coh-les}
0\ra \frac{H^{k-1}(\partial \HH_0)}{\im(\inc_{0}^{k-1})}\ra H^k(\HH_0,\partial \HH_0)
\ra H^k(\HH_0)\ra\im (\inc_0^k)\ra 0,
\end{equation}
for every $k\geq 1.$ By Poincar\'e-Alexander-Lefschetz duality, K\"unneth 
formula and (\ref{relative-individual}), from (\ref{coh-les}) we obtain:
\begin{align}
\label{master-inc}
 \rank(\inc_{0}^{k-1})+&\, \rank (\inc_{0}^{k})\notag\\
=&\, \beta_{k-1}(\partial \HH_0)+\beta_{k}(\HH_0)-\beta_k(\HH_0,\partial \HH_0) \notag\\
=&\, \beta_{k-1}(E(\RR))+\beta_{k}(X/\Conj\setminus X(\RR))
-\beta_k(X/\Conj,X(\RR))\notag\\
=&\, \sum_{i=0}^{k-1}\beta_i(X(\RR)) +\beta_{2n-k}(X/\Conj, X(\RR))
-\beta_k(X/\Conj,X(\RR))\notag\\
=&\, \sum_{i=0}^{k-1}\beta_i(X(\RR)) +\sum_{i=2n-k}^{2n}(\beta_i-\beta_i(X(\RR)))
-\sum_{i=k}^{2n}(\beta_i-\beta_i(X(\RR)))\notag\\
=&\, \sum_{i=0}^{k-1}\beta_i(X(\RR)) -\sum_{i=k}^{2n-k-1}\beta_i
+\sum_{i=k}^{2n-k-1}\beta_i(X(\RR))\notag \\
=&\, \sum_{i=0}^{2n-k-1}\beta_i(X(\RR)) -\sum_{i=k}^{2n-k-1}\beta_i.
\end{align}
Since $k<n$ and  $X$ is maximal, we find
$$
\sum_{i=0}^{2n-k-1}\beta_i(X(\RR))=\sum_{i=0}^{n}\beta_i(X(\RR))=\beta_*,
$$
and (\ref{master-inc}) can be rewritten as
\begin{equation}
\label{sum-ranks}
 \rank(\inc_{0}^{k-1})+ \rank (\inc_{0}^{k})= \sum_{i=0}^{k-1}\beta_i 
 +\sum_{i=2n-k}^{2n}\beta_i .
\end{equation}

Assume now  $\displaystyle\rank (\inc_{0}^{k-1})= \sum_{i=0}^{k-1}\beta_i.$ 
From (\ref{sum-ranks}) we find that 
$
\displaystyle
\rank (\inc_{0}^{k})=\sum_{i=2n-k}^{2n}\beta_i,
$
which, by Poincar\'e duality, is equivalent to 
$\displaystyle\rank (\inc_{0}^{k})= \sum_{i=0}^{k}\beta_i,$ 
concluding the induction argument.  
\end{proof}

Analogs of Lemmas \ref{dimensions-H0} and \ref{inc-zero} for $\HH_i,\,i=1,\dots,r,$  
are already available in the literature, in a wider context. For convenience, we collect 
below several results extracted  from Theorem 4.2 in \cite{square} and its proof.

\smallskip

Let $F$ be a compact $C^{\infty}$-manifold of real dimension $m.$ The
complement of the diagonal in its symmetric square $F^{(2)}\setminus \Delta F$ 
is naturally seen as the interior of a smooth compact
$2m$-dimensional manifold $\HH_F$ with boundary $\PP_\RR(T^* F).$ Let
$$
\inc^k_F: H^k(F^{(2)}\setminus \Delta F)=H^k(\HH_F)\ra H^k(\partial\HH_F)
=H^k(\PP_\RR(T^* F))
$$ 
be the restriction homomorphism.

\begin{thm}
\label{totaro-basis}
 Let $z_1, \dots, z_s$ be a basis for $H^*(F)$ and
let $Z_i$ be a closed pseudomanifold in $F$ that represents the class $z_i.$ Let 
$b\in H^1(F^{(2)}\setminus \Delta F)$ be the class of the double cover 
$g:(F\times F)\setminus \Delta F\ra F^{(2)}\setminus \Delta F.$ 
For every integer $k\geq 0,$ we have the following:
\begin{itemize}
\item[ 1)] A basis for $H^k(F^{(2)}\setminus \Delta F)$  is given by the elements 
$g_*(z_i\otimes z_j)$ with $\deg z_i +\deg z_j=k$ and $i < j,$ together with the elements 
$b^j[Z_i^{(2)}\setminus  \Delta Z_i]$ such that $2\deg z_i+j=k,\,i\geq 0$ and satisfying 
$0\leq j\leq m-1-\deg z_i.$
\item[ 2)] $\inc_F^k\left(g_*(z_i\otimes z_j)\right)=0$ for  all $i<j.$
\item[ 3)] The restrictions $\inc_F^k(b^j[Z_i^{(2)}\setminus  \Delta Z_i])$ satisfying 
$2\deg z_i+j=k,\,i\geq 0$  and $0\leq j\leq m-1-\deg z_i$ form a basis of   
$\im(\inc_F^k)\subseteq H^k\left(\PP_\RR(T^* F)\right).$
 \end{itemize}
\qed
\end{thm}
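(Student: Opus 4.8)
To prove the statements collected above, I would compare four exact sequences and then match the resulting numerology against the explicitly described cycles. For the setup, note that since $F^{(2)}\setminus\Delta F$ is the interior of $\HH_F$ one has $H^*(F^{(2)}\setminus\Delta F)=H^*(\HH_F)$, and $\inc_F^k$ is the restriction $H^k(\HH_F)\to H^k(\partial\HH_F)$ with $\partial\HH_F=\PP_\RR(T^*F)$. I would introduce the analogous compact $2m$-manifold $\widetilde{\HH}_F$ with interior $(F\times F)\setminus\Delta F$ and boundary the unit sphere bundle $S(T^*F)$; the factor swap $\tau$ acts on $\widetilde{\HH}_F$ freely — fibrewise antipodally on the boundary — with quotient $\HH_F$, so that $g\colon\widetilde{\HH}_F\to\HH_F$ is a double cover restricting to $(F\times F)\setminus\Delta F\to F^{(2)}\setminus\Delta F$ over the interior and to $S(T^*F)\to\PP_\RR(T^*F)$ over the boundary. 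Its characteristic class is the class $b$ of the statement, now viewed in $H^1(\HH_F)=H^1(F^{(2)}\setminus\Delta F)$, and, exactly as in \eqref{compat-char-classes}, $b|_{\partial\HH_F}=\gamma$.

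First I would record two standard computations. By Leray--Hirsch, $H^*(\PP_\RR(T^*F))$ is free over $H^*(F)$ on $1,\gamma,\dots,\gamma^{m-1}$; in particular its Betti numbers are determined, and the classes $z_i\gamma^{\deg z_i+j}$ with $0\le j\le m-1-\deg z_i$ are part of a basis of it. Second, writing the $\FF_2$-class of the diagonal as $\sum_i z_i\otimes z_i^{\#}$ for dual bases, with Gysin map $a\mapsto\sum_i(a z_i)\otimes z_i^{\#}$, the Thom--Gysin sequence of $\Delta F\cong F\subset F\times F$ computes $H^*((F\times F)\setminus\Delta F)$ together with its structure as an $\FF_2[\ZZ/2]$-module under $\tau$; here the restrictions of $z_i\otimes z_j$ and $z_j\otimes z_i$ for $i<j$ span free summands, sitting in total degree $\deg z_i+\deg z_j$.

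Next I would feed these into the Gysin sequence of the double cover $g$ — whose maps satisfy $g^*g_*=1+\tau^*$ and $b\cdot g_*(\,\cdot\,)=0$ (projection formula, since $g^*b=0$) — and into the long exact sequence of the pair $(\HH_F,\partial\HH_F)$, in which Poincaré--Lefschetz duality $H^k(\HH_F,\partial\HH_F)\cong H_{2m-k}(\HH_F)$ turns $\inc_F^k$ into the canonical map $H_{2m-k}(\HH_F)\to H_{2m-k}(\HH_F,\partial\HH_F)$; together these express the Betti numbers of $\HH_F$ and the ranks of $\inc_F^k$ in terms of one remaining unknown, the rank of multiplication by $b$ on $H^*(\HH_F)$. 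I would then fix that rank using the explicit cycles, which simultaneously give parts 2) and 3). For $i<j$, naturality of the transfer along the square relating $g$ over $\HH_F$ with $g$ over $\partial\HH_F$ shows that $\inc_F^k\,g_*(z_i\otimes z_j)$ is the fibrewise transfer $S(T^*F)\to\PP_\RR(T^*F)$ of the restriction of $z_i\otimes z_j$ to $S(T^*F)$; but that restriction is pulled back from $F$ (it is $z_i z_j$ read off on the diagonal), hence killed by the transfer — this is part 2). For the diagonal cycles, $Z_i^{(2)}\setminus\Delta Z_i$ is closed in $F^{(2)}\setminus\Delta F$, its closure in $\HH_F$ is neat and meets $\partial\HH_F$ in the projectivized tangent bundle of $Z_i$ sitting inside $\PP_\RR(T^*F)$, so, using $b|_{\partial\HH_F}=\gamma$, a tubular-neighbourhood computation near $\Delta Z_i$ identifies $\inc_F^k\big(b^j[Z_i^{(2)}\setminus\Delta Z_i]\big)$ with $z_i\gamma^{\deg z_i+j}$ modulo classes of smaller $\gamma$-degree; their linear independence for $0\le j\le m-1-\deg z_i$ is part 3). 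In particular multiplication by $b$ is injective on the span of the $b^j[Z_i^{(2)}\setminus\Delta Z_i]$, which fixes its rank, and a dimension count then yields part 1): the $b^j[Z_i^{(2)}\setminus\Delta Z_i]$ restrict to linearly independent classes on $\partial\HH_F$, the $g_*(z_i\otimes z_j)$ with $i<j$ lie in $\Ker\inc_F^k$ and are linearly independent there by pairing against the complementary cycles $\{\{x,y\}:x\in W_i,\ y\in W_j\}$ (with $W_\bullet$ dual to $z_\bullet$), and the two families together have cardinality equal to the Betti number of $F^{(2)}\setminus\Delta F$ produced above.

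The hard part will be this last step: showing that multiplication by $b$ is \emph{exactly} as injective as the cycles $b^j[Z_i^{(2)}\setminus\Delta Z_i]$ force it to be — equivalently, that the boundary-restriction data together with the transfer-annihilation of the off-diagonal classes $g_*(z_i\otimes z_j)$ leave no room in $H^*(\HH_F)$ beyond the listed classes. Concretely, one must match the free summands of the $\FF_2[\ZZ/2]$-module $H^*((F\times F)\setminus\Delta F)$ with the family $g_*(z_i\otimes z_j)$ under the transfer, and the remaining cohomology of $\HH_F$ with the family $b^j[Z_i^{(2)}\setminus\Delta Z_i]$ under the boundary restriction, so that the numerology assembled from the double-cover and pair sequences is genuinely realized and not merely bounded above.
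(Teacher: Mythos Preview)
The paper does not prove this theorem: it is extracted from Totaro's paper \cite{square} (Theorem 4.2 there and its proof), as signalled by the sentence preceding the statement and by the bare \qed{} immediately after it. There is therefore no proof in the present paper to compare your proposal against.

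Taken on its own terms, your outline is reasonable and in fact close in spirit to Totaro's argument. Parts 2) and 3) are essentially correct as you describe them: for 2), the restriction of $z_i\otimes z_j$ to $S(T^*F)$ is pulled back from $F$, hence from $\PP_\RR(T^*F)$, so the fibrewise transfer kills it; for 3), the intersection of the closure of $Z_i^{(2)}\setminus\Delta Z_i$ with $\partial\HH_F$ is $\PP_\RR(T^*Z_i)$, whose class in $\PP_\RR(T^*F)$ has leading term $z_i\gamma^{\deg z_i}$ by the same tubular-neighbourhood computation that underlies Lemma~\ref{restriction-hilb}, and multiplying by $b^j$ restricts to multiplying by $\gamma^j$.

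The genuine gap is the one you yourself flag in the last paragraph. Your dimension count for part 1) hinges on knowing the Betti numbers of $\HH_F$ independently, so that the listed classes, once shown linearly independent, are forced to span. But the four exact sequences you assemble do not by themselves determine $\beta_*(\HH_F)$: the Gysin sequence of the double cover determines it only up to the rank of cup product by $b$, and you then propose to fix that rank using the very cycles whose spanning property you are trying to prove. That is circular unless you supply an \emph{upper} bound on $\beta_*(\HH_F)$ from elsewhere. Totaro breaks this circularity by analysing the $\FF_2[\ZZ/2]$-module structure of $H^*((F\times F)\setminus\Delta F)$ directly via the Thom--Gysin sequence of the diagonal, identifying its free and trivial summands, and then reading off $H^*(\HH_F)$ as the $\ZZ/2$-coinvariants; this gives the Betti numbers a priori, after which the explicit cycles merely exhibit a basis. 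Your sketch gestures at this (``match the free summands \dots\ with the family $g_*(z_i\otimes z_j)$''), but does not carry it out, and it is precisely this module computation that constitutes the substance of Totaro's proof.
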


Adapted to our situation, we get the following consequences:

\begin{cor}
\label{beta-square}
If $X$ is an $n$-dimensional nonsingular real projective variety, then, for every $i=1,\dots,r$ 
and $k\geq 0,$ the following formulas hold:
\begin{itemize}
\item[ 1)] $\displaystyle \beta_{2k}(\HH_i)=\sum_{\substack{a+b=2k\\a<b}} 
\beta_a(F_i)\beta_b(F_i)+\frac12\beta_k(F_i)(\beta_k(F_i)-1)+\sum_{l=2k-n+1}^{k} \beta_l(F_i).$
\item[ 2)] $\displaystyle \beta_{2k+1}(\HH_i)=\sum_{\substack{a+b=2k+1\\a<b}} 
\beta_a(F_i)\beta_b(F_i)+\sum_{l=2k+1-n+1}^{k} \beta_l(F_i).$
\item[ 3)] $\displaystyle \beta_*(\HH_i)=\frac{1}2\beta_*(F_i)(\beta_*(F_i)-1)
+\sum_{k=0}^n (n-k)\beta_k(F_i).$
\item[ 4)] If, in addition, $X$ is maximal, then $\displaystyle \beta_*(\HH)=
\frac12\sum_{i=1}^r\beta_*^2(F_i)+\frac{n-1}{2}\beta_*.$ 
\end{itemize}
\end{cor}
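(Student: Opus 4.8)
The plan is to derive parts 1)--3) by straightforward bookkeeping from Theorem \ref{totaro-basis} applied to each real locus $F=F_i$, which is a closed $n$-manifold (so $m=n$ in the notation of that theorem), and then to obtain part 4) by summing part 3) over $i$ and invoking the maximality hypothesis through Lemma \ref{3together} / Lemma \ref{3together-even}. First I would fix $i$ and take $z_1,\dots,z_s$ to be an $\FF_2$-basis of $H^*(F_i)$ adapted to the grading, so that exactly $\beta_a(F_i)$ of the $z$'s have degree $a$. By part 1) of Theorem \ref{totaro-basis}, a basis of $H^k(\HH_i)=H^k(F_i^{(2)}\setminus\Delta F_i)$ is given by the ``off-diagonal'' classes $g_*(z_i\otimes z_j)$ with $\deg z_i+\deg z_j=k$, $i<j$, together with the ``tautological'' classes $b^j[Z_i^{(2)}\setminus\Delta Z_i]$ with $2\deg z_i+j=k$ and $0\le j\le n-1-\deg z_i$. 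Counting the first family: for $k=2$even$=2k$ these split into pairs $a<b$ with $a+b=2k$, contributing $\sum_{a<b,\,a+b=2k}\beta_a(F_i)\beta_b(F_i)$, plus the pairs with $a=b=k$, which (since $i<j$ among basis elements of the same degree $k$) contribute $\binom{\beta_k(F_i)}{2}=\frac12\beta_k(F_i)(\beta_k(F_i)-1)$; for odd total degree there is no $a=b$ term, giving just $\sum_{a<b,\,a+b=2k+1}\beta_a(F_i)\beta_b(F_i)$. Counting the second family: the constraint $2\deg z_i+j=k$ with $0\le j\le n-1-\deg z_i$ forces $\deg z_i=l$ to range so that $j=k-2l$ satisfies $0\le k-2l\le n-1-l$, i.e. $k-n+1\le l\le k/2$ (resp. $\le(k-1)/2$ in the odd case), contributing $\sum_l \beta_l(F_i)$ over exactly that range — which is the stated $\sum_{l=2k-n+1}^{k}$ (resp. $\sum_{l=2k+1-n+1}^{k}$) once one rewrites the index as in the statement. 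This gives 1) and 2). Summing 1)+2) over all $k\ge0$ gives 3): the off-diagonal contributions sum to $\frac12\big(\beta_*(F_i)^2-\beta_*(F_i)\big)$ by collecting all unordered pairs of basis elements, and the tautological contributions sum, upon switching the order of summation, to $\sum_{l=0}^{n}(\#\{j:0\le j\le n-1-l\})\beta_l(F_i)=\sum_{l=0}^{n}(n-l)\beta_l(F_i)$.

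For part 4), I would write $\beta_*(\HH)=\sum_{i=1}^r\beta_*(\HH_i)$ and plug in 3) to get
$$
\beta_*(\HH)=\frac12\sum_{i=1}^r\big(\beta_*(F_i)^2-\beta_*(F_i)\big)+\sum_{i=1}^r\sum_{k=0}^n(n-k)\beta_k(F_i).
$$
Since $\sum_i\beta_*(F_i)=\beta_*(X(\RR))$ and $\sum_i\beta_k(F_i)=\beta_k(X(\RR))$, the last double sum equals $\sum_{k=0}^n(n-k)\beta_k(X(\RR))$, and the middle term is $-\frac12\beta_*(X(\RR))$. Now maximality enters: by the second displayed identity in \eqref{sums} (for $n$ even apply the analogous half-splitting; in both parities Poincar\'e duality on $X(\RR)$ gives $\sum_{k=0}^n k\beta_k(X(\RR))=\frac n2\beta_*(X(\RR))$) we get $\sum_{k=0}^n(n-k)\beta_k(X(\RR))=n\beta_*(X(\RR))-\frac n2\beta_*(X(\RR))=\frac n2\beta_*(X(\RR))$, and maximality of $X$ replaces $\beta_*(X(\RR))$ by $\beta_*$ throughout. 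Hence
$$
\beta_*(\HH)=\frac12\sum_{i=1}^r\beta_*^2(F_i)-\frac12\beta_*+\frac n2\beta_*=\frac12\sum_{i=1}^r\beta_*^2(F_i)+\frac{n-1}2\beta_*,
$$
as claimed.

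The only genuinely delicate point is matching the summation ranges in parts 1)--3) with those coming out of Theorem \ref{totaro-basis}: one must be careful that the degree of a representative class $Z_i$ used in $b^j[Z_i^{(2)}\setminus\Delta Z_i]$ is its codimension or dimension according to the convention of \cite{square}, and that the bound $0\le j\le m-1-\deg z_i$ is transcribed with $m=n=\dim_\RR F_i$; an off-by-one here would shift the lower limit of the $\sum_l$ terms. Everything else is routine counting of basis elements and an application of Poincar\'e duality on the (possibly disconnected, possibly odd-dimensional) manifold $X(\RR)$, exactly as packaged in Lemmas \ref{3together} and \ref{3together-even}.
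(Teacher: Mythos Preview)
Your proposal is correct and follows exactly the approach the paper intends: the paper states Corollary~\ref{beta-square} as an immediate consequence of Theorem~\ref{totaro-basis} without supplying any details, so your bookkeeping of the two families of basis elements, the summation over $k$ for part~3), and the Poincar\'e-duality identity $\sum_k k\,\beta_k(X(\RR))=\tfrac{n}{2}\beta_*(X(\RR))$ combined with maximality for part~4) are precisely the omitted computations. One small remark: you need not invoke Lemmas~\ref{3together} or~\ref{3together-even} for part~4); the identity you actually use is the second equation in~\eqref{sums} (or rather its direct Poincar\'e-duality derivation for the $n$-manifold $X(\RR)$), which is valid in both parities and independent of those lemmas.
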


\begin{cor}
\label{inc-sym}
If $X$ is an  $n$-dimensional real projective manifold, then
\begin{equation*}
\rank(\inc^m)=\sum_{[\frac{m}{2}]\geq k\geq m-n+1}\beta_k(X(\RR)),
\end{equation*}
for every $m\in\{0,\dots,  2n\}.$
\end{cor}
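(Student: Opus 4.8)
The plan is to read off the rank of $\inc^m$ directly from Theorem \ref{totaro-basis}, applied to each real component $F_i$ of $X(\RR)$, and then sum over $i$. Recall that $\HH = \sqcup_{i=1}^r \HH_i$, so $H^m(\HH) = \bigoplus_i H^m(\HH_i)$ and $H^m(E(\RR)) = H^m(\PP_\RR(T^*X(\RR))) = \bigoplus_i H^m(\PP_\RR(T^*F_i))$, and the map $\inc^m$ splits as the direct sum of the maps $\inc_{F_i}^m$ of Theorem \ref{totaro-basis} (with $m$ there $= n$, since each $F_i$ is $n$-dimensional). Hence $\rank(\inc^m) = \sum_{i=1}^r \rank(\inc_{F_i}^m)$, and it suffices to compute each summand.

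Next I would invoke part 3) of Theorem \ref{totaro-basis}: a basis of $\im(\inc_{F_i}^m)$ is given by the restrictions $\inc_{F_i}^m(b^j[Z_\ell^{(2)}\setminus\Delta Z_\ell])$ with $2\deg z_\ell + j = m$, $\ell \geq 0$, and $0 \leq j \leq n-1-\deg z_\ell$; the classes $g_*(z_\ell\otimes z_{\ell'})$ with $\ell<\ell'$ restrict to zero by part 2). So $\rank(\inc_{F_i}^m)$ counts the number of such admissible pairs $(\ell,j)$. Organizing the count by the degree $k := \deg z_\ell$: for each $k$ there are $\beta_k(F_i)$ basis classes $z_\ell$ of that degree, the value of $j$ is forced to be $m-2k$, and the constraints $j\geq 0$ and $j\leq n-1-k$ translate into $k\leq \lfloor m/2\rfloor$ (using $j = m-2k \geq 0$, so in fact $k \le m/2$, and since $k$ is an integer, $k\le\lfloor m/2\rfloor$) and $k \geq m-n+1$. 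Therefore $\rank(\inc_{F_i}^m) = \sum_{\lfloor m/2\rfloor \geq k \geq m-n+1}\beta_k(F_i)$, and summing over $i$ gives $\rank(\inc^m) = \sum_{\lfloor m/2\rfloor \geq k \geq m-n+1}\beta_k(X(\RR))$, as claimed; the restriction $m\in\{0,\dots,2n\}$ just reflects that $E(\RR) = \PP_\RR(T^*X(\RR))$ has dimension $2n-1$ so both sides vanish outside this range anyway (and for $m=2n-1$ or slightly below the index set automatically stays within $0\le k\le n$).

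The only point that needs a little care — and the main (minor) obstacle — is the bookkeeping converting the two inequalities $0\le j\le n-1-\deg z_\ell$ with $j = m-2k$ into the single double inequality $m-n+1\le k\le\lfloor m/2\rfloor$, and checking that no class is counted twice: distinct $z_\ell$ of the same degree give distinct (linearly independent) restrictions by part 3), and classes of different degrees pair with different powers $j$, so there is no overlap. One should also note that $b_i$, the restriction of $b$ to the component $F_i\times F_i\setminus\Delta F_i$, is exactly the class called $b$ in Theorem \ref{totaro-basis} for $F = F_i$, which is the compatibility already recorded in Section \ref{compatibility}, so the hypotheses of Theorem \ref{totaro-basis} are met verbatim. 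Everything else is a direct substitution, so no essential difficulty arises beyond this indexing.
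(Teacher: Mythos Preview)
Your proposal is correct and follows precisely the approach the paper intends: the paper does not write out a proof of Corollary~\ref{inc-sym} at all, presenting it (together with Corollary~\ref{beta-square}) as an immediate consequence of Theorem~\ref{totaro-basis} ``adapted to our situation''. Your argument---splitting over the components $F_i$, invoking part~3) of Theorem~\ref{totaro-basis} to count basis elements of $\im(\inc_{F_i}^m)$, and converting the constraints $0\le j\le n-1-k$ with $j=m-2k$ into $m-n+1\le k\le\lfloor m/2\rfloor$---is exactly the bookkeeping that the paper leaves implicit.
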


We conclude this section with the following results generalizing Proposition 3.2 in 
\cite{loss-surfaces}. The proof presented in  \cite[Proposition 3.2]{loss-surfaces} extends 
literally to higher dimensions and will be omitted.

\begin{prop}
\label{calculus}
Let $X$ be a compact complex manifold of dimension $n$ equipped with a real structure 
$\Conj.$ We have the following:
\begin{itemize}
\item[ 1)] The relation
\begin{equation}
\label{M-chi}
\chi(X^{[2]}(\RR))=\frac12\beta_*-\beta_{\rm odd}+\frac12\chi(X(\RR))^2-\chi(X(\RR)).
\end{equation}
\item[ 2)] If $\Tors_2 H_*(X;\ZZ)=0,$ 
the relation
\begin{equation}
\label{totaro-notorsion}
\beta_*(X^{[2]})=\frac12 \beta_*(\beta_*-1)+ n\beta_*-\beta_{\rm odd},
\end{equation}
\item[ 3)] If $\Tors_2 H_*(X;\ZZ)\neq 0,$ the relation 
\begin{equation}
\label{totaro-torsion}
\beta_*(X^{[2]}) \geq \frac12 \beta_*(\beta_*-1)+ n\beta_*-\beta_{\rm odd}.
\end{equation}
\end{itemize}
\end{prop}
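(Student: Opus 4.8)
The plan is to compute all the relevant topological invariants of $X^{[2]}$ directly from the cut-and-paste decomposition set up in Section \ref{cut-paste}, together with Totaro's description of $H^*(F^{(2)}\setminus\Delta F)$ recalled in Theorem \ref{totaro-basis}. For part 1), I would first recall that $\chi(X^{[2]}(\RR))$ can be assembled from the Euler characteristics of the strata: $X^{[2]}(\RR)$ is built from $X/\Conj\setminus X(\RR)$, the pieces $F_i^{(2)}\setminus\Delta F_i$, the products $F_i\times F_j$ with $i<j$, and the separating hypersurface $E(\RR)=\PP_\RR(T^*X(\RR))$. Since $\chi$ is additive for such decompositions into manifolds-with-boundary glued along common boundaries (using that $\chi$ of an odd-dimensional closed manifold vanishes, so the gluing terms $\chi(\PP_\RR(T^*F_i))$ drop out), I get $\chi(X^{[2]}(\RR))=\chi(X/\Conj\setminus X(\RR))+\sum_i\chi(F_i^{(2)}\setminus\Delta F_i)+\sum_{i<j}\chi(F_i)\chi(F_j)$. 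Now $\chi(X/\Conj\setminus X(\RR))=\tfrac12(\chi(X)-\chi(X(\RR)))=-\tfrac12\chi(X(\RR))$ because $\chi(X)=0$ for any even-complex-dimensional manifold — wait, more carefully: $X$ has complex dimension $n$, so real dimension $2n$, and $\chi(X)$ need not vanish; rather one uses $\chi(X\setminus X(\RR))=\chi(X)-\chi(X(\RR))$ together with $\chi(X/\Conj\setminus X(\RR))=\tfrac12(\chi(X)-\chi(X(\RR)))$. For the symmetric-square strata, $\chi(F_i^{(2)}\setminus\Delta F_i)=\tfrac12(\chi(F_i)^2-\chi(F_i))-\chi(F_i)$ since $\chi(\Delta F_i)=\chi(F_i)$ and $\chi(F_i\times F_i\setminus\Delta F_i)=\chi(F_i)^2-\chi(F_i)$, and the symmetric quotient halves this off-diagonal part while $\chi(\Delta F_i)$ contributes once. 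Summing over $i$ and reorganizing $\sum_i\chi(F_i)^2+2\sum_{i<j}\chi(F_i)\chi(F_j)=\chi(X(\RR))^2$, together with the identity $\chi(X)=\beta_{\rm even}-\beta_{\rm odd}$ combined with $\beta_{\rm even}+\beta_{\rm odd}=\beta_*$ to write $\tfrac12(\chi(X)-\chi(X(\RR)))$ in the stated form — here I should be careful to track how $\chi(X)$ rewrites as $\tfrac12\beta_*-\beta_{\rm odd}+\tfrac12\chi(X)$... in any case the bookkeeping collapses to $(\ref{M-chi})$; this rearrangement is the one genuinely fiddly point and I would do it explicitly.

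For parts 2) and 3), the starting point is Totaro's formula (Theorem 6.1 of \cite{square}, or the recollection in Theorem \ref{totaro-basis} applied with $F=X$, complex coefficients): when $\Tors_2 H_*(X;\ZZ)=0$ there is an exact Mayer–Vietoris-type description giving $\dim_{\FF_2} H_*(X^{[2]})$ in terms of $\dim_{\FF_2}H_*(X)$, and Totaro shows $b_*(X^{[2]};\FF_2)=\tfrac12\beta_*(\beta_*-1)+\sum_{k}(2n-k)\beta_k$ — no, I must match the stated right-hand side: $\tfrac12\beta_*(\beta_*-1)+n\beta_*-\beta_{\rm odd}$. Using Poincaré duality on $X$ to symmetrize, $\sum_k k\beta_k=n\beta_*$, and the extra $-\beta_{\rm odd}$ comes from the fact that on the diagonal contribution the tautological-class range $0\le j\le m-1-\deg z_i$ loses one dimension exactly for odd-degree classes (where the self-pairing term $\tfrac12\beta_k(\beta_k-1)$ is replaced in effect, for complex coefficients $\tfrac12\beta_k(\beta_k+1)$ or $\tfrac12\beta_k(\beta_k-1)$ according to parity of the cup-square). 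I would therefore: (i) write $\beta_*(X^{[2]})=\sum_{a<b}\beta_a\beta_b+\sum_k(\text{self-square term in degree }k)+\sum_{k}\#\{j:0\le j\le 2n-1-k\}\beta_k$ from Totaro's basis, (ii) simplify $\sum_{a<b}\beta_a\beta_b+(\text{self terms})=\tfrac12(\beta_*^2-\beta_{\rm odd})$ or similar, (iii) evaluate $\sum_k(2n-k)\beta_k=2n\beta_*-\sum_k k\beta_k=2n\beta_*-n\beta_*=n\beta_*$ via Poincaré duality, and (iv) collect to reach $(\ref{totaro-notorsion})$. For part 3), the only change is that when $\Tors_2 H_*(X;\ZZ)\ne 0$ the relevant exact sequence (an integral-to-$\FF_2$ universal coefficients / Bockstein comparison) is only known to give an inequality: mod-$2$ Betti numbers can only grow relative to the torsion-free model, so $(\ref{totaro-torsion})$ follows by the same count with ``$\ge$''. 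Since the excerpt explicitly says the proof of \cite[Proposition 3.2]{loss-surfaces} extends verbatim, I would not reprove Totaro's structural results but simply cite them and perform the degree bookkeeping in dimension $n$.

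The main obstacle is purely organizational rather than conceptual: keeping the parity bookkeeping straight in the self-intersection / tautological-class contributions (the source of the lone $-\beta_{\rm odd}$ term in all three formulas), and making sure the gluing contributions $\chi(\PP_\RR(T^*F_i))$ in part 1) genuinely cancel — which they do because $\PP_\RR(T^*F_i)$ fibers over $F_i$ with fiber $\PP_\RR^{n-1}$, so its Euler characteristic is $\chi(F_i)\cdot\chi(\PP^{n-1}_\RR)$, which vanishes when $n$ is even and equals $\chi(F_i)$ when $n$ is odd; in the latter case one must check these boundary terms are already absorbed correctly into the inclusion–exclusion. Assuming that check goes through (it does, because each $\PP_\RR(T^*F_i)$ is glued to exactly two top-strata, $\HH_0$ and $\HH_i$, so it is counted with a consistent sign), the rest is a finite computation. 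I would present part 1) first since it is self-contained via Euler-characteristic additivity, then parts 2)–3) citing \cite{square} and \cite{loss-surfaces}.
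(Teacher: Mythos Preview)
The paper does not actually prove this proposition; it merely remarks that the proof of \cite[Proposition 3.2]{loss-surfaces} ``extends literally to higher dimensions'' and omits all details. Your strategy --- additivity of the Euler characteristic along the cut-and-paste decomposition of $X^{[2]}(\RR)$ for part 1), and Totaro's basis from \cite{square} for parts 2) and 3) --- is sound and almost certainly coincides with what the reference does in the surface case.

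That said, there is one concrete slip in your sketch of part 1). You write
\[
\chi\bigl(F_i^{(2)}\setminus\Delta F_i\bigr)=\tfrac12\bigl(\chi(F_i)^2-\chi(F_i)\bigr)-\chi(F_i),
\]
but the correct value is simply $\tfrac12\bigl(\chi(F_i)^2-\chi(F_i)\bigr)$: the double cover $F_i\times F_i\setminus\Delta F_i\to F_i^{(2)}\setminus\Delta F_i$ is free, and since the base is an open manifold of even real dimension $2n$, the ordinary and compactly-supported Euler characteristics agree. There is no extra $-\chi(F_i)$ term; the diagonal has already been excised and does not ``contribute once'' to the complement. With this correction, the inclusion--exclusion
\[
\chi\bigl(X^{[2]}(\RR)\bigr)=\chi(\HH_0)+\sum_{i=1}^r\chi(\HH_i)+\sum_{i<j}\chi(F_i)\chi(F_j)-\chi(E(\RR))
\]
collapses immediately, because $E(\RR)$ is a closed manifold of odd dimension $2n-1$, so $\chi(E(\RR))=0$ in all cases --- you do not need the case split on the parity of $n$ in your last paragraph. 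Then $\chi(\HH_0)=\tfrac12(\chi(X)-\chi(X(\RR)))$ and the identity $\tfrac12\chi(X)=\tfrac12\beta_*-\beta_{\rm odd}$ finishes part 1).

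For parts 2) and 3), your plan to count Totaro's basis elements and simplify via Poincar\'e duality on $X$ (giving $\sum_k k\beta_k=n\beta_*$) is correct; the $-\beta_{\rm odd}$ indeed arises from the parity-dependent self-pairing term, and the inequality in the torsion case is exactly Totaro's observation. Just tidy the exposition: several of your mid-sentence corrections (``wait, more carefully\dots'', ``no, I must match\dots'') should be resolved before the write-up.
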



\subsection{Special submanifolds of Hilbert squares}\label{special}


Let $X$ be a compact complex manifold of dimension $n$ equipped with a real structure 
$\Conj,$ and $Y\subseteq X$ a smooth, $\Conj$-invariant, complex submanifold  of 
codimension $m,$ with $Y(\RR)\neq \emptyset,$ and denote by $\upsilon\in H^m(X(\RR))$ 
the cohomology class of $Y(\RR).$ The Hilbert square $Y^{[2]}\subseteq X^{[2]}$ is a 
$\Conj$-invariant complex submanifold of codimension $2m.$ Its real locus,
$Y^{[2]}(\RR),$ is transversal to $E(\RR)$ and intersects the latter 
along $\PP_\RR(T^*Y(\RR)).$ The cycle
$Y^{[2]}(\RR)$  defines a cohomology class  $\Upsilon_Y$ in 
$H^{2m}({X}^{[2]}(\RR)).$

Let $j_0, \, j$ and $\kappa$ denote the inclusions $\displaystyle \HH_0
 \hookrightarrow X^{[2]}(\RR),\, \sqcup_{i=1}^r\HH_i\hookrightarrow X^{[2]}(\RR),$ 
and $E(\RR)\hookrightarrow X^{[2]}(\RR),$ respectively. For every integer $i\geq 0,$ 
consider the induced commutative diagram of restrictions:
\begin{equation}
\label{restrictions-diag}
\xymatrix{
 &H^{2i}(\HH_0)  \ar[dr]^{\inc^*_0} \\ 
  H^{2i}(X^{[2]}(\RR))\ar[rd]_{j^*} \ar[ur]^{j^*_0}\ar[rr]^{\hspace{5pt}\kappa^*}&& H^{2i}(E(\RR))\\
  & H^{2i}(\sqcup_{i=1}^r\HH_i)\ar[ur]_{\inc^*} }
\end{equation}

 Let 
\begin{align*}
\theta_{Y}=&j_0^*(\Upsilon_{Y})=[(Y/\Conj)\setminus Y(\RR)]\in  
H^{2m}(\HH_0)\\
\sigma_{Y}=&j^*(\Upsilon_{Y})=[Y(\RR)^{(2)}\setminus \Delta Y(\RR)]\in  
H^{2m}(\sqcup_{i=1}^r \HH_i).
\end{align*}
By the commutativity of diagram (\ref{restrictions-diag}), we have 
\begin{equation}
\label{equal-restrictions}
\inc_0^{2m}(\theta_Y)=\inc^{2m}(\sigma_Y)=\kappa^*(\Upsilon_Y).
\end{equation}

As is well-known, the cohomology ring $H^*(E(\RR))$ is 
the polynomial ring $H^*(X(\RR))[\gamma]/\langle \gamma^{n}+w_1\gamma^{n-1}
+\cdots +w_n \rangle,$ where $w_1,\dots, w_n$ are the Stiefel-Whitney classes 
of $X,$ and $\gamma$ is the first Stiefel-Whitney class of the tautological line 
bundle of $E(\RR)=\PP_\RR(T^*X(\RR))$ denoted by $\eta.$

Further on, to avoid excessive notations, we use  (where it does not lead to a 
misunderstanding) the same symbol for a cohomology class, or a vector bundle, 
and its restrictions.

\begin{lem} 
\label{restriction-hilb}
We have 
$$
\kappa^*(\Upsilon_{Y})=\gamma^{m}\upsilon+\gamma^{m-1}\Sq^1\upsilon+\dots 
+\gamma\Sq^{m-1}\upsilon+\Sq^{m}\upsilon
$$
where $\upsilon\in H^m(X(\RR))$ is the cohomology class of $Y(\RR).$
\end{lem}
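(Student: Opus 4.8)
The plan is to compute the restriction $\kappa^*(\Upsilon_Y)$ by understanding geometrically how the cycle $Y^{[2]}(\RR)$ meets the exceptional divisor $E(\RR)=\PP_\RR(T^*X(\RR))$. By hypothesis $Y^{[2]}(\RR)$ is transversal to $E(\RR)$ and intersects it exactly along $\PP_\RR(T^*Y(\RR))$, so $\kappa^*(\Upsilon_Y)$ is the cohomology class in $H^{2m}(\PP_\RR(T^*X(\RR)))$ Poincaré dual to the submanifold $\PP_\RR(T^*Y(\RR))\subset \PP_\RR(T^*X(\RR))$. Thus the task reduces to a purely classical computation: express the class of $\PP_\RR(T^*Y(\RR))$ inside the projectivized cotangent bundle of $X(\RR)$ in terms of $\gamma$, the Stiefel--Whitney classes of $X(\RR)$, and the class $\upsilon$ of $Y(\RR)$.

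First I would set up the double fibration picture: let $\pi:E(\RR)=\PP_\RR(T^*X(\RR))\to X(\RR)$ be the projection, and observe that $\PP_\RR(T^*Y(\RR))=\pi^{-1}(Y(\RR))\cap \PP_\RR(N)^c$ is cut out inside $\pi^{-1}(Y(\RR))$; more precisely, over $Y(\RR)$ one has the exact sequence $0\to N^*\to T^*X(\RR)|_{Y(\RR)}\to T^*Y(\RR)\to 0$ where $N$ is the normal bundle, and $\PP_\RR(T^*Y(\RR))$ sits inside $\PP_\RR(T^*X(\RR)|_{Y(\RR)})$ as the zero locus of a section of a rank-$m$ bundle, namely $\eta^{*}\otimes \pi^*N^*$ (a hyperplane in $T^*X$ contains a fixed vector precisely when the corresponding linear functional on $N^*\hookrightarrow T^*X$ vanishes — this is exactly the Grothendieck-convention incidence condition). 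Hence the class of $\PP_\RR(T^*Y(\RR))$ in $H^*(\PP_\RR(T^*X(\RR)))$ is $[\,\pi^{-1}Y(\RR)\,]\cdot e(\eta^*\otimes\pi^*N)$, i.e. $\pi^*\upsilon$ times the top Stiefel--Whitney class $w_m(\eta^*\otimes \pi^*N)$.

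The computational heart is then the Whitney-sum/splitting-principle expansion of $w_m(\eta^*\otimes \pi^*N)$. Writing the total SW class of $\pi^*N$ formally as $\prod(1+x_i)$ and using $w_1(\eta^*)=w_1(\eta)=\gamma$, one gets $w_m(\eta^*\otimes\pi^*N)=\prod_{i=1}^m(\gamma+x_i)=\sum_{j=0}^m \gamma^{m-j} w_j(\pi^*N)$. So $\kappa^*(\Upsilon_Y)=\pi^*\upsilon\cdot\sum_{j=0}^m\gamma^{m-j}\pi^*w_j(N)=\sum_{j=0}^m\gamma^{m-j}\,(w_j(N)\,\upsilon)$, where I suppress $\pi^*$ and identify $H^*(X(\RR))$ with its image. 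It remains to identify $w_j(N)\cup\upsilon$ with $\Sq^j\upsilon$. This is the Wu-formula-type identity: for the class $\upsilon$ of a codimension-$m$ submanifold $Y(\RR)$ with normal bundle $N$, one has $\Sq^j\upsilon = w_j(N)\cup\upsilon$ (the self-intersection/Thom-class description of Steenrod squares, e.g. as in Milnor--Stasheff), so $w_j(N)\upsilon=\Sq^j\upsilon$ for $0\le j\le m$ and the stated formula follows.

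The main obstacle I anticipate is pinning down precisely which line bundle's section cuts out $\PP_\RR(T^*Y(\RR))$ under the Grothendieck convention — i.e. getting the twist $\eta^*\otimes\pi^*N$ (versus $\eta\otimes\pi^*N^*$ or a dual) exactly right, together with the convention-dependent sign in $w_1(\eta)$ — and then invoking the correct form of the geometric Steenrod-square identity $\Sq^j\upsilon=w_j(N)\upsilon$ with the right degree range. Over $\FF_2$ duals cost nothing for SW classes, so the only real care needed is bookkeeping: checking on a local model (say $Y(\RR)$ a coordinate subspace in Euclidean space, $X(\RR)=Y(\RR)\times\RR^m$) that the incidence locus $\PP_\RR(T^*Y(\RR))$ is indeed the vanishing of the tautological section of $\eta^*\otimes\pi^*N$, which fixes all constants and yields the displayed expression for $\kappa^*(\Upsilon_Y)$.
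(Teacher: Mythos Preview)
Your proposal is correct and is essentially the same argument as the paper's: identify $\kappa^*(\Upsilon_Y)$ with the class of $\PP_\RR(T^*Y(\RR))$ in $E(\RR)$, realize this locus inside $\pi^{-1}(Y(\RR))$ as the zero set of a transverse section of $\Hom(\eta^\vee,N)\cong\eta\otimes N$, expand $w_m(\eta\otimes N)=\sum_j\gamma^{m-j}w_j(N)$, and finish with Thom's formula $\Sq^j\upsilon=s_*w_j(N)$. The only cosmetic difference is that the paper computes the class first in $\PP_\RR(T^*X(\RR))|_{Y(\RR)}$ and then pushes forward, while you multiply by $\pi^*\upsilon$ from the outset; over $\FF_2$ your $\eta^*\otimes N$ and the paper's $\eta\otimes N$ agree, so the convention worry you flag is indeed harmless.
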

\begin{proof}
The statement is the real analog of Lemma 6.1 in \cite{square}. The proof below 
follows the same lines.

Let $V={Y}^{[2]}(\RR)\cap E(\RR),$ and notice that 
$$
V=\PP_\RR(T^*Y(\RR))\subseteq \PP_\RR(T^*X(\RR))_{|Y(\RR)}\subseteq  
\PP_\RR(T^*X(\RR))=E(\RR)
$$
is the zero set of a transverse section of the vector bundle 
$
\Hom(\eta^{\vee},N)
$ 
over $\PP_\RR(T^*X(\RR))_{|Y(\RR)},$ where $N$  is the pullback to 
$\PP_\RR(T^*X(\RR))_{|Y(\RR)}$ of the normal bundle $N_{Y(\RR)/X(\RR)}$ of 
$Y(\RR)$ in $X(\RR).$ Indeed, such a section of $\Hom(\eta^{\vee},N)$ is provided 
by the subbundle $\eta$ of $T^*X(\RR)_{|Y(\RR)}.$ As a consequence, the cohomology 
class of $V$ in $\PP_\RR(T^*X(\RR))_{|Y(\RR)}$ is the top Stiefel-Whitney class of the 
rank $m$ vector bundle $\eta\otimes N:$
\begin{align}
\label{first-res}
w_m(\eta\otimes N)=&\,w_1^{m}(\eta)+w_1^{m-1}(\eta)w_1(N)+\cdots+w_{m}(N)\notag \\
=&\, \gamma^{m}+\gamma^{m-1}w_1(N)+\cdots+w_{m}(N).
\end{align}
Let now $s: Y(\RR)\ra X(\RR)$ denote the inclusion. Since by \cite[La formule g\'en\'erale]{thom}
 we have
$$
\Sq^i \upsilon=s_* w_i(N_{Y(\RR)/X(\RR)}),
$$
the conclusion of the lemma follows by pushing forward (\ref{first-res}) to $E(\RR)$.
\end{proof}


\section{Proof of Theorem \ref{converse}}


We follow here the same strategy as in the proof of \cite[Theorem 1.1]{loss-surfaces}. 
As a first step, we show the following:

\begin{prop}
\label{nonemptyness}
Let $X$ be a real nonsingular projective variety of dimension $n\ge 2$. If $X^{[2]}$ is maximal, 
then $X(\RR)\neq\emptyset.$
\end{prop}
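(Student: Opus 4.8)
The plan is to argue by contradiction, following the strategy of \cite[Theorem 1.1]{loss-surfaces}. Suppose $X(\RR)=\emptyset$ but $X^{[2]}$ is maximal. When $X(\RR)=\emptyset$, the real structure $\Conj$ on $X$ is free, so the branch locus $E$ of the double cover $\Bl_\Delta(X\times X)\to X^{[2]}$ has empty real locus, $E(\RR)=\emptyset$, and the cut-and-paste description from Section \ref{cut-paste} degenerates to $X^{[2]}(\RR)=X/\Conj$, the quotient of $X$ by the free involution $\Conj$. The key point is then that maximality of the pair $(X^{[2]},\Conj)$ must be confronted with the fact that $\Conj$ acts freely on $X$: for a free involution the fixed locus is empty, so the Smith inequality would force $\dim H_*(X^{[2]}(\RR))=\dim H_*(X^{[2]}(\CC))$, and we must show this is impossible for $n\ge 2$.

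The main steps I would carry out are as follows. First, observe that since $\Conj$ is free on $X$, the quotient map $X\to X/\Conj$ is a genuine double covering, so $X^{[2]}(\RR)=X/\Conj$ is a closed manifold of real dimension $2n$ carrying a class $\omega\in H^1(X/\Conj)$ classifying this cover, and by the Smith sequence applied to $(X,\Conj)$ itself (with empty fixed locus) one gets $\dim H_*(X/\Conj)\le \frac12\dim H_*(X)=\frac12\beta_*$. Second, I would compute or estimate $\dim H_*(X^{[2]}(\CC))=\beta_*(X^{[2]})$ using Proposition \ref{calculus}(2)--(3): regardless of $2$-torsion one has $\beta_*(X^{[2]})\ge \frac12\beta_*(\beta_*-1)+n\beta_*-\beta_{\rm odd}$. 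Third, combine these: maximality of $X^{[2]}$ gives $\dim H_*(X^{[2]}(\RR))=\beta_*(X^{[2]})\ge \frac12\beta_*(\beta_*-1)+n\beta_*-\beta_{\rm odd}$, while the covering $X\to X/\Conj=X^{[2]}(\RR)$ imposes an upper bound on $\dim H_*(X^{[2]}(\RR))$ in terms of the topology of $X$ — in particular the Euler characteristic $\chi(X^{[2]}(\RR))$ must be compatible with $X^{[2]}(\RR)$ being the base of a free $\ZZ/2$-quotient of $X$. I expect the cleanest contradiction to come through the Euler characteristic: by Proposition \ref{calculus}(1) with $X(\RR)=\emptyset$ (so $\chi(X(\RR))=0$) one gets $\chi(X^{[2]}(\RR))=\frac12\beta_*-\beta_{\rm odd}$, whereas as a free quotient $\chi(X^{[2]}(\RR))=\frac12\chi(X(\CC))$; but $\chi(X(\CC))$ equals the alternating sum $\sum(-1)^i b_i$, and reconciling $\frac12\beta_*-\beta_{\rm odd}$ (an $\FF_2$-Betti-number expression) with $\frac12\chi(X)$ together with the maximality lower bound on $\beta_*(X^{[2]})$ will be impossible unless $\beta_*$ is very small, which is excluded when $n\ge 2$ because $\beta_0,\beta_{2n}\ge 1$ and $\beta_2\ge 1$ (the hyperplane class, or more elementarily $H^2$ of a positive-dimensional projective variety is nonzero), forcing $\beta_*\ge 3$ and hence $\frac12\beta_*(\beta_*-1)+n\beta_*-\beta_{\rm odd}$ strictly larger than the available room.

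Concretely, I would show $\dim H_*(X/\Conj)\le \tfrac12\beta_*$ and, since under the maximality hypothesis this must equal $\beta_*(X^{[2]})\ge \tfrac12\beta_*(\beta_*-1)+n\beta_*-\beta_{\rm odd}\ge \tfrac12\beta_*(\beta_*-1)$, deduce $\tfrac12\beta_*\ge \tfrac12\beta_*(\beta_*-1)$, i.e. $\beta_*\le 2$. For $n\ge 2$ a projective variety has $\beta_0,\beta_{2n}\ge 1$ and a nonzero degree-$2$ class, so $\beta_*\ge 3$ (if $X$ is connected; if not, $\beta_0\ge 2$ and $\beta_*\ge 4$), a contradiction. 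The main obstacle I anticipate is justifying the upper bound $\dim H_*(X^{[2]}(\RR))=\dim H_*(X/\Conj)\le \tfrac12\beta_*$ cleanly: this needs the Smith sequence for the free involution $(X,\Conj)$ where $F=\emptyset$, so that $H_*(\bar X,F)=H_*(\bar X)$ and the sequence $\cdots\to H_{p+1}(\bar X)\xrightarrow{\cap\omega} H_p(\bar X)\to H_p(X)\to H_p(\bar X)\to\cdots$ is exact, whence $\dim H_*(X)\ge \dim H_*(\bar X)$ — but to get the factor $\tfrac12$ one uses instead that the transfer argument identifies $\dim H_*(X)=2\dim H_*(\bar X)-2\sum\dim\coker(\cap\omega)$-type relations from Corollary \ref{cor-smith} with $F=\emptyset$, giving $\dim H_*(\bar X)\le\tfrac12\dim H_*(X)$ directly. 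Care must also be taken that the variety may be disconnected, but disconnectedness only strengthens the numerics. Once the bound $\beta_*\le 2$ is reached, ruling it out for $n\ge2$ is immediate, completing the proof.
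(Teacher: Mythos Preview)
Your overall contradiction strategy is the same as the paper's, but the key upper bound you invoke is false. You claim that for a free involution the Smith sequence yields $\dim H_*(X/\Conj)\le\tfrac12\beta_*$, and you even sketch a derivation from Corollary~\ref{cor-smith}. But Corollary~\ref{cor-smith} with $F=\emptyset$ says
\[
2\sum_p \dim\coker\bigl(\tr^*_p:H_p(\bar X)\to H_p(X)\bigr)=\beta_*,
\]
i.e.\ $\rank(\tr^*)=\tfrac12\beta_*$. Since $\rank(\tr^*)\le\dim H_*(\bar X)$, this gives the \emph{opposite} inequality $\dim H_*(\bar X)\ge\tfrac12\beta_*$. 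A concrete counterexample: for $X=S^{2n}$ with the antipodal involution one has $\bar X=\RR P^{2n}$, $\beta_*(X)=2$, and $\beta_*(\bar X)=2n+1$, which is certainly not $\le 1$. So the step ``$\tfrac12\beta_*\ge\tfrac12\beta_*(\beta_*-1)$, hence $\beta_*\le 2$'' collapses.

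What the paper actually does is bound each $\beta_i(X/\Conj)$ individually by a descending induction along the Smith sequence, obtaining $\beta_i(X/\Conj)\le\sum_{k\ge i}\beta_k$ for $i\ge n$ (and the symmetric bound for $i\le n$), and then sums to get $\beta_*(X/\Conj)\le 2+n\beta_*$. This weaker bound is compared with the lower bound $\beta_*(X^{[2]})\ge\tfrac12\beta_*(\beta_*-1)+n\beta_*-\beta_{\rm odd}$ from Proposition~\ref{calculus}; the contradiction then comes from $\beta_*-\beta_{\rm odd}\ge\beta_0+\beta_2+\beta_{2n}\ge 3$ (using projectivity for $\beta_2\ge 1$) rather than from $\beta_*\le 2$. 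Your Euler-characteristic remark, while correct as far as it goes, does not by itself produce a contradiction either, since $\tfrac12\chi(X)=\tfrac12\beta_*-\beta_{\rm odd}$ is an identity, not an obstruction. The missing idea is precisely the inductive individual-degree estimate on $\beta_i(X/\Conj)$.
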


\begin{proof}
By contradiction,  let us assume that $X^{[2]}$ is maximal and $X(\RR)=\emptyset$. 
Then $X^{[2]}(\RR)$ is the smooth quotient manifold $X/\Conj,$ of real dimension $2n,$ 
and so 
$$
\beta_*(X^{[2]}(\RR))=\beta_*(X/\Conj).
$$
Since $X(\RR)=\emptyset,$ the first relevant homology groups in the  Smith sequence in 
Theorem \ref{Smith.seq} satisfy
$$
0 \ra H_{2n}(X/\Conj)\xrightarrow{\Delta_{2n}}  H_{2n-1}(X/\Conj)\xrightarrow{\tr^*_{2n-1}}  
H_{2n-1}(X)\ra \cdots 
$$

Since $X/\Conj$ is connected, we have $\beta_{2n}(X/\Conj)=1$ and we find
$$
\beta_{2n-1}(X/\Conj) = 1+ \rank(\tr^*_{2n-1})\leq \beta_{2n}+\beta_{2n-1}
$$ 
where $\beta_i$ states for $\beta_i(X).$ We assume 
$$
\beta_{p+1}(X/\Conj)  \leq \sum_{k=p+1}^{2n}\beta_{k}.
$$ 
Using again the exactness of the Smith sequence, we find
$$
0\ra \im(\tr^*_{p+1})\   \ra H_{p+1}(X)\xrightarrow{\pr_{*,p}} H_{p+1}(X/\Conj)
\xrightarrow{\Delta_{p+1}}  
H_{p}(X/\Conj)\ra \im({\tr^*_{p}}) \ra 0
$$
Hence 
$$
\beta_p(X/\Conj)= \rank({\tr^*_{p}})+\beta_{p+1}(X/\Conj)+ \rank(\tr^*_{p+1})-\beta_{p+1}
\leq  \sum_{k=p}^{2n}\beta_{k}.
$$
By descending induction, we find that 
$$
\beta_{i}(X/\Conj)\leq\sum_{k=i}^{2n} \beta_{k}, \quad{\text{for all}}\,n\leq i\leq 2n.
$$
By Poincar\'e duality, or by inspecting the Smith sequence starting from the other end, 
we also find 
$$
\beta_{i}(X/\Conj)\leq\sum_{k=0}^{i} \beta_{k}, \quad{\text{for all}}\, ~0\leq i\leq n.
$$
Therefore, we obtain
$$
\beta_*(X/\Conj)\le  (n+1)\beta_0+n\beta_1+\dots +2\beta_{n-1}+\beta_n+2\beta_{n+1}
+\dots + (n+1)\beta_{2n}.
$$
Since $X$ is connected, we have
$\beta_0=\beta_{2n}=1$ and find
\begin{equation}
\label{estimateHilbR}
\beta_*(X^{[2]}(\RR))=\beta_*(X/\Conj)\le 2+n\beta_*.
\end{equation}
Also, since $X$ is projective, $\beta_2\geq 1$ and we notice that 
\begin{equation}
\label{bettis}
\beta_*\geq \beta_*-\beta_{\rm odd}\geq \beta_0+\beta_2+\beta_{2n}\ge3.
\end{equation}
To finish the proof, we use now  the third item of 
Proposition \ref{calculus}, 
(\ref{bettis}) and (\ref{estimateHilbR})
to notice that  
\begin{align*}
\beta_*(X^{[2]})\geq &\, \frac12\beta_*(\beta_*+1)+(n-1)\beta_* -\beta_{\rm odd}\\
\geq &\, 2 \beta_*+(n-1)\beta_* -\beta_{\rm odd}\\
= &\, n\beta_*+(\beta_*-\beta_{\rm odd})\\
> &\, 2+n\beta_*\\
\geq &\, \beta_*(X^{[2]}(\RR)),
\end{align*}
contradicting the maximality of $X^{[2]}.$ 
\end{proof}

\begin{rmk}
{\rm The projectivity  assumption can be replaced by ``compact complex" with 
$\beta_{2k}\ge 1$ for at least one $k, 0<k<n$".}
\end{rmk}

\begin{proof}[Proof of Theorem \ref{converse}] 
The non-emptiness of $X(\RR)$ being now ensured by Proposition \ref{nonemptyness}, 
the remaining part of the proof of Theorem \ref{converse} is identical 
to the proof of Theorem 1.1 in \cite{loss-surfaces}. We include the main ideas for 
convenience of the reader.

\smallskip

Pick a point $p\in X(\RR),$ whose existence is ensured by Proposition \ref{nonemptyness}, 
and consider the map $f:X\ra X^{(2)}$ given by 
$$
f(x)=\{p,x\}.
$$

Since the Hilbert-Chow map $\pi:X^{[2]}\ra X^{(2)}$ is an isomorphism when restricted to 
$X^{[2]}\setminus E$ and $f(X\setminus\{p\})\cap \pi(E)=\emptyset,$ the restriction of $f$ 
to $X\setminus\{p\}$ induces a map 
$$
\phi:X\setminus \{p\}\rightarrow X^{[2]}.
$$
The map $\phi$ extends to the blowup $q:\Bl_p X\ra X$ of $X$ at the point 
$p,$ and so we have a commutative diagram
$$
\xymatrix{\Bl_p X\ar[r]^\phi\ar[d]_{q}&X^{[2]\ar[d]^{\pi}}\,\\
 X
 \ar[r]^f &X^{(2)}.}
$$

By \cite[Lemma 4.2]{loss-surfaces} 
\footnote{Lemma 4.2 in \cite{loss-surfaces} is stated and proved for surfaces, 
but the same proof can be easily adapted in arbitrary dimension.}, 
the map 
$$
\phi^*:H^*(X^{[2]})\ra H^*(\Bl_p X)
$$ 
is surjective.
Consider next the commutative diagram
$$
\xymatrix{&H_G^*(X^{[2]})\ar[r]\ar[d]_{R^{[2]}} &H_G^*(\Bl_p X)\,\ar[d]^R\\
&H^*(X^{[2]})\ar[r]^{\phi^*\,\,\,}
&H^*(\Bl_p X),\\
}
$$
where the notation $H^*_G(Y)$ stands for the equivariant cohomology with 
$\FF_2$-coefficients of a topological space $Y$ equipped with the action of 
a group $G.$ In our case $G=\FF_2$ acting by complex conjugation on the 
corresponding complex variety. 

By  \cite[Proposition 2.6]{loss-surfaces}, since $X^{[2]}$ is maximal,  the restriction map 
$$
R^{[2]}: H_G^*(X^{[2]})\ra H^*(X^{[2]})
$$ 
is surjective, implying the surjectivity of the restriction map 
$$
R: H_G^*(\Bl_p X)\ra \ H^*(\Bl_p X).
$$ 
Applying once again \cite[Proposition 2.6]{loss-surfaces}, we find that $\Bl_p(X)$ is maximal, 
and so there remains to notice that $\Bl_p(X)$ is maximal if and only if $X$ is maximal. 
\end{proof}


\section{
Projective complete intersections}
\label{proof-odd}


This section is devoted to the proof of Theorem \ref{defect-odd}. We assume throughout this 
section that {\it $X$ is a maximal real nonsingular projective complete intersection of dimension $n$.}


\subsection{A reduction in the computation of the deficiency}


Our approach varies according to the parity of the dimension of $X.$


\subsubsection{Odd dimension.}


We assume first that the dimension $n$ of $X$ is odd. In this case, $\chi(X(\RR))=0$, 
since $X(\RR)$ is a closed odd dimensional manifold. Moreover, 
$\beta_{\rm odd}=\beta_n=\beta_*-(n+1),$ as it follows from the Lefschetz theorem 
on hyperplane sections. Therefore, using (\ref{M-chi}) and (\ref{totaro-notorsion}), 
the deficiency of $X^{[2]}$ can be computed as follows:
\begin{align}
\label{deficiency-square-odd}
\defi(X^{[2]})=&\, \beta_*(X^{[2]})-\beta_*(X^{[2]}(\RR))\notag\\ 
=&\, \beta_*(X^{[2]})-2\beta_{\rm even}(X^{[2]}(\RR)) +\chi(X^{[2]}(\RR))\notag\\ 
=&\, \frac12 \beta_*^2+ n\beta_*-2\beta_{\rm odd} - 2\beta_{\rm even}(X^{[2]}(\RR)) \notag\\
=&\, \frac12 \beta_*^2+ (n-2)\beta_*+2n+2-2\beta_{\rm even}(X^{[2]}(\RR)). 
\end{align}
Notice now that 
\begin{align}
\label{sum-betti-even}
\beta_{\rm even}(X^{[2]}(\RR))=&\, \beta_{\rm even}(X^{[2]}_{\rm main}(\RR))
+\beta_{\rm even}(X^{[2]}_{\rm extra}(\RR))\notag \\
=&\, 2 \sum_{l=0}^{\frac{n-1}2}\left(\beta_{2l}(X^{[2]}_{\rm main}(\RR))+ 
\beta_{2l}(X^{[2]}_{\rm extra}(\RR))\right).
\end{align}
To compute $\beta_{2l}(X^{[2]}_{\rm main}(\RR))$ for every integer $l$ such that $2l\leq n-1,$ 
we use the Mayer-Vietoris sequence
$$
\cdots  \ra H_{2l}(E(\RR))\xrightarrow{\mu_{2l}}\bigoplus_{i=0}^r H_{2l}(\HH_i)
\ra H_{2l}(X^{[2]}_{\text{main}}(\RR))\ra H_{2l-1}(E(\RR))\xrightarrow{\mu_{2l-1}} \dots,
$$
where $\mu$ is the inclusion map $\displaystyle \mu: E(\RR)\to \sqcup_{i=0}^r \HH_i$ 
(see Section \ref{cut-paste}). From the resulting short exact sequence
$$
0\ra\coker (\mu_{2l})\ra H_{2l}(X^{[2]}_{\text{main}}(\RR))\ra \Ker(\mu_{2l-1})\ra 0,
$$
we find
\begin{align}
\label{master-main}
\beta_{2l}(X^{[2]}_{\text{main}}(\RR))=&\, \Dim\coker (\mu_{2l})+\Dim\Ker(\mu_{2l-1})\notag \\
=&\, \beta_{2l}(\HH_0)+\sum_{i=1}^r\beta_{2l}(\HH_i)+\beta_{2l-1}(E(\RR))\notag \\
&\, -\rank(\mu_{2l})-\rank(\mu_{2l-1}).
\end{align}
Therefore, for every integer $l$ such that $0\leq 2l\leq n-1,$  we have
\begin{align}
\label{beta-2l-first}
\beta_{2l}(X^{[2]}(\RR))=\,& \beta_{2l}(X^{[2]}_{\rm main}(\RR))+
\beta_{2l}(X^{[2]}_{\rm extra}(\RR))\notag\\
=&\, \beta_{2l}(X^{[2]}_{\rm extra}(\RR))+\beta_{2l}(\HH_0)+
\sum_{i=1}^r\beta_{2l}(\HH_i)+\beta_{2l-1}(E(\RR))\notag \\
&\, -\rank(\mu_{2l})-\rank(\mu_{2l-1}).
\end{align}
Notice that by the K\"unneth formula we have 
\begin{align}
\label{extra-even}
\beta_{2l}(X^{[2]}_{\rm extra}(\RR))=&\, \sum_{1\leq s<t\leq r} \beta_{2l}(F_s\times F_t) \notag \\
=&\, \sum_{i+j=2l}\sum_{1\leq s<t\leq r}\beta_i(F_s)\beta_j(F_t).
\end{align}


\subsubsection{Even dimension.}


We assume here that the dimension $n$ of $X$ is even.
As it follows from the Lefschetz theorem on hyperplane sections, we have 
$\beta_{\rm odd}=0$. Therefore, using (\ref{M-chi}) and (\ref{totaro-notorsion}), the deficiency of 
$X^{[2]}$ can be computed as follows:
\begin{align*}
\defi(X^{[2]})=&\, \beta_*(X^{[2]})-\beta_*(X^{[2]}(\RR))\notag\\ 
=&\, \beta_*(X^{[2]})-2\beta_{\rm odd}(X^{[2]}(\RR)) - \chi(X^{[2]}(\RR))\notag\\ 
=&\, \frac12 \beta_*^2+ (n-1)\beta_* - 2\beta_{\rm odd}(X^{[2]}(\RR)) -\frac12 \chi(X(\RR))^2+\chi(X(\RR)).
\end{align*}
Since $X$ is maximal, we have $\chi(X(\RR))=\beta_*-2\beta_{\rm odd}(X(\RR)),$ and we find 
\begin{align}
\label{deficiency-square-even}
\defi(X^{[2]})=&\, n\beta_*+2\beta_*\beta_{\rm odd}(X(\RR))-2\beta_{\rm odd}^2(X(\RR))
-2\beta_{\rm odd}(X(\RR))\notag \\
&- \, 2\beta_{\rm odd}(X^{[2]}(\RR))\notag \\
=&\, n\beta_*+2\beta_{\rm even}(X(\RR))\beta_{\rm odd}(X(\RR))
-2\beta_{\rm odd}(X(\RR))\notag \\
&- \, 2\beta_{\rm odd}(X^{[2]}(\RR)).
\end{align}
By the same arguments as in the odd dimensional case, we get:
\begin{align}
\label{sum-betti-odd}
\beta_{\rm odd}(X^{[2]}(\RR))=&\, \beta_{\rm odd}(X^{[2]}_{\rm main}(\RR))
+\beta_{\rm odd}(X^{[2]}_{\rm extra}(\RR))\notag \\
=&\, 2 \sum_{l=1}^{\frac{n}2}\left(\beta_{2l-1}(X^{[2]}_{\rm main}(\RR))
+\beta_{2l-1}(X^{[2]}_{\rm extra}(\RR))\right)
\end{align}
where 
\begin{align}
\label{master-main-even}
\beta_{2l-1}(X^{[2]}_{\text{main}}(\RR))=&\, \Dim\coker (\mu_{2l-1})+\Dim\Ker(\mu_{2l-2})\notag \\
=&\, \beta_{2l-1}(\HH_0)+\sum_{i=1}^r\beta_{2l-1}(\HH_i)+\beta_{2l-2}(E(\RR))\notag \\
&\, -\rank(\mu_{2l-1})-\rank(\mu_{2l-2})
\end{align}
and 
\begin{align}
\label{extra-odd}
\beta_{2l-1}(X^{[2]}_{\rm extra}(\RR))=&\, \sum_{1\leq s<t\leq r} \beta_{2l-1}(F_s\times F_t) \notag \\
=&\, \sum_{i+j=2l-1}\sum_{1\leq s<t\leq r}\beta_i(F_s)\beta_j(F_t).
\end{align}

As a result we have now reduced the computation of the deficiency of the Hilbert 
square to the computation of the ranks of the restriction maps in the 
Mayer-Vietoris sequence.


\subsection{The rank of Mayer-Vietoris restriction maps}
\label{rank-restrictions}


Let $H$ be a smooth, $\Conj$-invariant, hyperplane section of $X,$ and denote by 
$H_{\ell},\, 1\leq \ell\leq n,$ the iterated hyperplane sections. Without loss of generality, we can 
and will assume that each $H_{\ell}$ is smooth and $\Conj$-invariant. Let $\upsilon\in H^1(X(\RR))$ 
denote the cohomology class of $H(\RR).$ For every $1\leq \ell\leq n,$ the Hilbert square 
$H_{\ell}^{[2]}$ is a $\Conj$-invariant, closed complex submanifold of complex codimension 
$2\ell$ in $X^{[2]}.$ The cycles $H_{\ell}^{[2]}(\RR),\, 1\leq \ell\leq n,$  define cohomology classes  
$\Upsilon_l$ in $H^{2\ell}(X^{[2]}(\RR)).$

Let $b_0 \in H^1(\HH_0\setminus \partial \HH_0)=H^1(\HH_0)$ be the 
characteristic class of the double covering $X\setminus X(\RR)\ra \HH_0\setminus\partial \HH_0,$ 
discussed in Section \ref{compatibility}. Using the notations from Section \ref{special}, we define 
$$
\theta_{\ell}= j_0^*(\Upsilon_l)\in H^{2\ell}(\HH_0),\, 1\leq \ell\leq n,
$$
and set $\theta_{0}=1\in H^{0}(\HH_0)\simeq \FF_2.$
\begin{lem}
\label{basis}
Let $X\subseteq \PP^N$ be a real, maximal, $n$-dimensional complete intersection. 
For every $k=0,\dots,n-1,$ the collection of classes $\left\{b_0^j\theta_{\ell}\right\}$ with
$j+2\ell=k$ form a basis of $H^k(\HH_0).$
\end{lem}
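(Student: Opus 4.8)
The plan is to prove this by a dimension count combined with the surjectivity of the restriction map $\inc_0^*$ and the compatibility of characteristic classes. First I would observe that, by Lemma \ref{dimensions-H0}, the dimension of $H^k(\HH_0)$ for $0\le k\le n-1$ equals $\sum_{i=2n-k}^{2n}\beta_i$, which by Poincar\'e duality applied to $X$ equals $\sum_{i=0}^{k}\beta_i$. On the other hand, the number of pairs $(j,\ell)$ with $j,\ell\ge 0$ and $j+2\ell=k$ is precisely $\lfloor k/2\rfloor+1$; but we must weight each $\ell$ by the contribution it deserves. Indeed, the point is that $\theta_\ell$ should be thought of as the analogue of $b^j[Z_i^{(2)}\setminus\Delta Z_i]$ from Theorem \ref{totaro-basis}, and the iterated hyperplane section classes $\theta_\ell = j_0^*(\Upsilon_\ell)$ only account for the ``hyperplane part'' of the cohomology. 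So the first real step is to pin down exactly what collection of classes is being claimed as a basis — reading the statement, it is the set $\{b_0^j\theta_\ell : j+2\ell=k,\ 0\le\ell\}$, and I would need $\theta_\ell\ne 0$ for the relevant range of $\ell$, together with enough of these being linearly independent, to match the dimension $\sum_{i=0}^k\beta_i$. For a complete intersection, $\beta_i(X(\CC))=1$ for each even $i$ in the appropriate range and the odd ones vanish below the middle dimension, so $\sum_{i=0}^k\beta_i = \lfloor k/2\rfloor + 1$ when $k<n$ (using that $k\le n-1$ avoids the middle cohomology subtleties), which is exactly the number of pairs $(j,\ell)$ with $j+2\ell=k$. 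So the dimension count will work out cleanly.

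The key steps, in order, would be: (1) Compute $\dim H^k(\HH_0) = \sum_{i=0}^k\beta_i = \lfloor k/2\rfloor+1$ for $k\le n-1$, using Lemma \ref{dimensions-H0} and the Lefschetz hyperplane theorem for complete intersections. (2) Observe that the number of monomials $b_0^j\theta_\ell$ with $j+2\ell=k$ is also $\lfloor k/2\rfloor+1$, so it suffices to prove these classes are linearly independent. (3) To prove independence, apply the restriction map $\inc_0^*: H^*(\HH_0)\to H^*(E(\RR))$. By Lemma \ref{inc-zero}, $\rank(\inc_0^k)=\sum_{i=0}^k\beta_i = \dim H^k(\HH_0)$ for $k<n$, so $\inc_0^k$ is injective in this range. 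Hence it is enough to show the images $\inc_0^k(b_0^j\theta_\ell)$ are linearly independent in $H^k(E(\RR))$. (4) Now use the compatibility relations: by (\ref{compat-char-classes}), $\inc_0^1(b_0)=\gamma$, and by Lemma \ref{restriction-hilb} (applied to $Y=H_\ell$, with $\upsilon^\ell$ the class of $H_\ell(\RR)$), $\inc_0^{2\ell}(\theta_\ell)=\kappa^*(\Upsilon_\ell)=\gamma^\ell\upsilon^\ell + (\text{lower order in }\gamma)$. Therefore $\inc_0^k(b_0^j\theta_\ell)=\gamma^{j+\ell}\upsilon^\ell + (\text{terms with strictly higher power of }\gamma\text{ and lower power of }\upsilon)$. (5) Since $H^*(E(\RR))=H^*(X(\RR))[\gamma]/\langle\gamma^n+w_1\gamma^{n-1}+\cdots\rangle$ is free over $H^*(X(\RR))$ with basis $1,\gamma,\dots,\gamma^{n-1}$, and since $j+\ell = k-\ell$ ranges over distinct values $k-\lfloor k/2\rfloor,\dots,k$ all $\le k\le n-1$ as $\ell$ varies, the leading terms $\gamma^{k-\ell}\upsilon^\ell$ sit in distinct free summands $\gamma^{k-\ell}\cdot H^*(X(\RR))$; moreover $\upsilon^\ell\ne 0$ for $\ell\le\lfloor n/2\rfloor$ by Lemma \ref{hyperplane-class}, and here $\ell\le\lfloor k/2\rfloor\le\lfloor(n-1)/2\rfloor\le\lfloor n/2\rfloor$. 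A triangularity argument with respect to the $\gamma$-grading then forces linear independence.

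The main obstacle I anticipate is step (5): making the triangularity argument airtight. One has to be careful that the ``lower order'' correction terms $\gamma^{k-\ell-t}\Sq^t\upsilon^\ell$ in $\inc_0^{2\ell}(\theta_\ell)$, after multiplication by $\gamma^j$, do not conspire to cancel the leading terms of other basis elements — this requires ordering the $b_0^j\theta_\ell$ by the exponent of $\upsilon$ (equivalently by $\ell$) and checking that in each $\gamma$-degree the contributions are upper-triangular with nonzero diagonal entries $\upsilon^\ell$. A secondary subtlety is confirming that $k\le n-1$ is genuinely enough to keep all exponents $k-\ell$ of $\gamma$ strictly below $n$ so that $1,\gamma,\dots,\gamma^{k}$ remain part of the standard free basis of $H^*(E(\RR))$ over $H^*(X(\RR))$ — indeed $k-\ell \le k \le n-1$, so this is fine. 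Once these bookkeeping points are settled, injectivity of $\inc_0^k$ from Lemma \ref{inc-zero} together with the matching dimension count closes the argument.
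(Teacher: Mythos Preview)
Your strategy is the paper's strategy: reduce to linear independence by the dimension count from Lemma~\ref{dimensions-H0} (for a complete intersection $\dim H^k(\HH_0)=\lfloor k/2\rfloor+1$ when $k\le n-1$), restrict via $\inc_0^*$ to $H^*(E(\RR))$, expand using Lemma~\ref{restriction-hilb}, and finish with a triangular argument in the Leray--Hirsch basis together with $\upsilon^\ell\ne 0$ from Lemma~\ref{hyperplane-class}. Two small corrections. First, your appeal to Lemma~\ref{inc-zero} for injectivity of $\inc_0^k$ is superfluous: linear independence of the images already gives linear independence of the preimages. Second, and more importantly, you have the direction of the triangularity reversed. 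The actual expansion is
\[
\inc_0^k(b_0^{\,k-2\ell}\theta_\ell)=\gamma^{k-\ell}\upsilon^\ell+\sum_{t=1}^{\ell}\gamma^{k-\ell-t}\,\Sq^t\upsilon^\ell,
\]
so the correction terms carry \emph{lower} powers of $\gamma$ (and, since $\Sq^t\upsilon^\ell=\binom{\ell}{t}\upsilon^{\ell+t}$ here, \emph{higher} powers of $\upsilon$), not the other way around. Consequently the induction must start at the top $\gamma$-degree: the $\gamma^k$-coefficient forces $c_0=0$, then the $\gamma^{k-1}$-coefficient forces $c_1\upsilon=0$, and so on down to $c_{[k/2]}$. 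With this orientation fixed your argument goes through and coincides with the paper's.
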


\begin{proof} By Poincar\'e-Alexander-Lefschetz duality and Lemma \ref{RelativeQuotient}, 
it suffices to show that for every $k=0,\dots, n-1,$ the elements in the set 
$\left\{b_0^j\theta_{\ell}\right\}$ with $j+2\ell=k$ are linearly independent.  
Let $c_\ell\in \FF_2, \, 0\leq\ell\leq [k/2]$ such that 
\begin{equation}
\label{ind-res}
\sum_{\ell=0}^{[k/2]} c_{\ell} b_0^{k-2\ell}\theta_{\ell}=0.
\end{equation}
Restricting (\ref{ind-res}) to $H^{k}(E(\RR))$ we find:
$$
0=\sum_{\ell=0}^{[k/2]} c_{\ell} \inc_0^*(b_0^{k-2\ell}\theta_{\ell}) =
\sum_{\ell=0}^{[k/2]} c_{\ell}\gamma^{k-2\ell} \inc_0^*j_0^*(\Upsilon_{\ell})=
\sum_{\ell=0}^{[k/2]} c_{\ell}\gamma^{k-2\ell} \kappa^*(\Upsilon_{\ell}).
$$
Using now Lemma \ref{restriction-hilb}, we obtain the following relation in $H^k(E(\RR)):$
$$
\sum_{\ell=0}^{[k/2]} c_{\ell}\gamma^{k-2\ell} (\gamma^{\ell}\upsilon^{\ell}+
\gamma^{\ell-1}\Sq^1\upsilon^{\ell}+\dots +\gamma\Sq^{\ell-1}\upsilon^{\ell}
+\Sq^{\ell}\upsilon^{\ell})=0.
$$
Notice that the left-hand side is a  degree $k$ polynomial $P(\gamma)$ 
in the variable $\gamma,$ with coefficients in $H^*(X(\RR)).$ Thus, according to the 
Leray-Hirsch theorem, the coefficients of each $\gamma^{k-r},\, 0\leq r\leq k$  must vanish, 
and we find: 
\begin{equation} 
\label{rec-indep}
\sum_{j=0}^{[r/2]}c_{r-j}\Sq^j\upsilon^{r-j}=0,\, {\text{for every}}\, r=0,\dots,k.
\end{equation}

By Lemma \ref{hyperplane-class}, $\upsilon^r\neq 0$  for every 
$r \leq \left[\frac{n}{2}\right],$ and the same holds for the pullbacks 
$\pi^*\upsilon^r\neq 0,$ as it follows, once more, from the Leray-Hirsch theorem.
Using this, from (\ref{rec-indep}) we inductively obtain $c_{[k/2]-\ell}=0$ for all 
$0\leq \ell\leq [k/2].$
\end{proof}

\begin{prop}
\label{inclusion-restrictions}
Let $X\subseteq \PP^N$ be a real, maximal, non-singular complete intersection of
dimension $n\geq 1.$ Then 
$$
\rank(\mu_{k})=\sum_{i=0}^{[k/2]}\beta_i(X(\RR)).
$$ 
for every $0\leq k\leq n-1.$
\end{prop}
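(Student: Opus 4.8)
The plan is to compute $\rank(\mu_k)$ directly from the definition $\mu = (\inc_0, \inc)$ by understanding the images of $\inc_0^k$ and $\inc^k$ inside $H^k(E(\RR))$ and how they overlap. Recall that $\mu_k$ is dual to the cohomology map $\mu^k \colon H^k(\HH_0) \oplus H^k(\HH) \to H^k(E(\RR))$, $(\alpha,\beta) \mapsto \inc_0^k(\alpha) + \inc^k(\beta)$, so $\rank(\mu_k) = \rank(\mu^k) = \dim\bigl(\im(\inc_0^k) + \im(\inc^k)\bigr)$. First I would assemble the three ingredients already available: by Lemma~\ref{inc-zero}, $\rank(\inc_0^k) = \sum_{i=0}^k \beta_i(X(\RR))$ for $k < n$ (caution: this uses the $\FF_2$-Betti numbers $\beta_i = \beta_i(X(\CC))$ on the right, but for a complete intersection the Lefschetz theorem identifies $\beta_i(X(\RR))$-type sums appropriately—I will have to keep the two kinds of Betti numbers straight); by Corollary~\ref{inc-sym}, $\rank(\inc^k) = \sum_{[k/2] \ge j \ge k-n+1}\beta_j(X(\RR))$, which for $k \le n-1$ simplifies to $\sum_{j=0}^{[k/2]}\beta_j(X(\RR))$; and by Theorem~\ref{totaro-basis}(2)--(3), $\im(\inc^k)$ is spanned by the restrictions $\inc^k(b_i^{\,t}[Z^{(2)}\setminus \Delta Z])$, with the "mixed" classes $g_*(z\otimes z')$ restricting to zero.

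The heart of the argument is to show that $\im(\inc^k) \subseteq \im(\inc_0^k)$, so that $\im(\inc_0^k) + \im(\inc^k) = \im(\inc_0^k)$ and hence $\rank(\mu_k) = \rank(\inc_0^k)$. Wait—that would give $\sum_{i=0}^k \beta_i$, not $\sum_{i=0}^{[k/2]}\beta_i(X(\RR))$, so the containment must in fact go the other way, or more precisely the images must coincide and equal the smaller group. The correct statement to prove is $\im(\inc_0^k) = \im(\inc^k)$ inside $H^k(E(\RR))$, which forces $\rank(\mu_k)$ to equal their common rank $\sum_{i=0}^{[k/2]}\beta_i(X(\RR))$. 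To establish this I would use Lemma~\ref{basis}: for $k \le n-1$, the classes $\{b_0^{\,j}\theta_\ell\}_{j+2\ell=k}$ form a basis of $H^k(\HH_0)$, and by the compatibility of characteristic classes (\ref{compat-char-classes}) together with (\ref{equal-restrictions}) and Lemma~\ref{restriction-hilb}, $\inc_0^k(b_0^{\,j}\theta_\ell) = \gamma^j \kappa^*(\Upsilon_\ell) = \gamma^j(\gamma^\ell \upsilon^\ell + \gamma^{\ell-1}\Sq^1\upsilon^\ell + \cdots + \Sq^\ell\upsilon^\ell)$. On the other side, the generators $\inc^k(b_i^{\,t}[Z^{(2)}\setminus\Delta Z])$ of $\im(\inc^k)$, pushed to $E(\RR) = \PP_\RR(T^*X(\RR))$ and expressed via the Leray--Hirsch splitting $H^*(E(\RR)) = H^*(X(\RR))[\gamma]/\langle \cdots\rangle$, land in the span of exactly the same $\gamma$-polynomials with $H^*(X(\RR))$-coefficients built from $\upsilon^\ell$ and its Steenrod squares. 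So both images are the span, over $\FF_2$, of $\{\gamma^{k-\ell}\kappa^*(\Upsilon_\ell) : 0 \le \ell \le [k/2]\}$; Lemma~\ref{basis}'s linear-independence computation (the recursion (\ref{rec-indep}), driven by $\upsilon^r \ne 0$ for $r \le [n/2]$ from Lemma~\ref{hyperplane-class}) shows these $[k/2]+1$ classes are independent in $H^k(E(\RR))$.

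Putting this together: $\rank(\mu_k) = \dim \im(\inc_0^k) = \dim\,\mathrm{span}\{\gamma^{k-\ell}\kappa^*(\Upsilon_\ell)\}_{\ell=0}^{[k/2]}$, and by Leray--Hirsch this dimension equals $\sum_{\ell=0}^{[k/2]} \dim\bigl(\upsilon^\ell \cdot H^{\ast}(X(\RR)) \text{-part}\bigr)$—more carefully, it equals $\sum_{i=0}^{[k/2]}\beta_i(X(\RR))$ because the class $\kappa^*(\Upsilon_\ell)$ contributes, through its top $\gamma$-coefficient $\upsilon^\ell$ and the nonvanishing of $\upsilon^\ell$, exactly one independent direction in each $H^i(X(\RR))$-graded piece for $i \le \ell$, and these stack up without collision (again by the triangular structure of (\ref{rec-indep})). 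The step I expect to be the main obstacle is the last one: pinning down that the common image has dimension precisely $\sum_{i=0}^{[k/2]}\beta_i(X(\RR))$ rather than something larger or smaller. This requires a careful bookkeeping argument showing that, in the basis-free description of $\im(\inc^k)$ coming from Theorem~\ref{totaro-basis}(3), the only surviving independent classes after restriction to $E(\RR)$ are those indexed by cohomology classes of $X(\RR)$ in degrees $\le [k/2]$—equivalently, that the seemingly larger collection $\{b_i^t[Z_s^{(2)}\setminus\Delta Z_s]\}$ collapses, under $\inc^k$, onto the span of the hyperplane-power classes $\upsilon^\ell$ and their Steenrod squares. I would handle this by observing that the pushforward to $E(\RR)$ of any $b_i^t[Z^{(2)}\setminus\Delta Z]$ with $\deg z = e$ is, by the same Stiefel--Whitney-class computation as in Lemma~\ref{restriction-hilb}, a $\gamma$-polynomial whose coefficients lie in the ideal generated by the class $[Z(\RR)] \in H^e(X(\RR))$; reconciling the count of such classes with $\sum_{i=0}^{[k/2]}\beta_i(X(\RR))$ is where the complete-intersection hypothesis (via Lemma~\ref{hyperplane-class} and the Lefschetz-type control of $H^*(X(\RR))$ in low degrees) does the essential work.
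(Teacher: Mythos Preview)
Your overall plan --- compute $\rank(\mu_k)$ as $\dim\bigl(\im(\inc_0^k)+\im(\inc^k)\bigr)$ and compare the two images --- is right, and you correctly identify the relevant inputs (Lemma~\ref{inc-zero}, Corollary~\ref{inc-sym}, Lemma~\ref{basis}, the compatibility relations). The gap is in the middle paragraph, where you talk yourself out of the correct step.

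You write ``the containment must in fact go the other way, or more precisely the images must coincide.'' The first clause is exactly right and is all that is needed; the second clause is false and derails the rest of the argument. By Lemma~\ref{inc-zero}, for $k<n$ and a complete intersection one has $\dim\im(\inc_0^k)=\sum_{i=0}^k\beta_i=[k/2]+1$, whereas by Corollary~\ref{inc-sym} one has $\dim\im(\inc^k)=\sum_{i=0}^{[k/2]}\beta_i(X(\RR))$. These two numbers agree only when $\beta_i(X(\RR))=1$ for every $i\le[k/2]$, which is precisely the exceptional situation singled out in Corollary~\ref{odd-coro}; for a generic maximal complete intersection the second number is strictly larger, so the images cannot coincide. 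Your final paragraph, which tries to show $\dim\im(\inc_0^k)=\sum_{i=0}^{[k/2]}\beta_i(X(\RR))$ by ``careful bookkeeping,'' is therefore attempting to prove something false.

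The fix is that only the one containment $\im(\inc_0^k)\subseteq\im(\inc^k)$ is needed: once it holds, $\im(\inc_0^k)+\im(\inc^k)=\im(\inc^k)$, so $\rank(\mu_k)=\rank(\inc^k)$ and Corollary~\ref{inc-sym} finishes. For the containment, Lemma~\ref{basis} says $H^k(\HH_0)$ is spanned by the classes $b_0^j\theta_\ell$ with $j+2\ell=k$, and by~(\ref{compat-char-classes}) together with~(\ref{equal-restrictions}) each of these restricts to $\gamma^j\kappa^*(\Upsilon_\ell)\in\im(\inc^*)$. That is the paper's entire proof; no further Leray--Hirsch or Steenrod-square bookkeeping on the image is required beyond what is already packaged in Lemma~\ref{basis}.
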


\begin{proof} 
By (\ref{compat-char-classes}) and (\ref{equal-restrictions}), we find 
$\inc_0^*(b_0^j)=\inc^*(b^j)=\gamma^j$ and $\inc_0^* \left(\theta_{\ell}\right)\in \im(\inc^*)$ 
for every $j\geq 0$ and $0\leq \ell \leq \left[\frac{n}2\right],$ respectively. Therefore 
$\inc_0^* (b_0^j\theta_{\ell})\in \im(\inc^*)$ for every integer $j, \ell\geq 0$ such that $j+2\ell=k.$ 
In particular, we find $\im(\inc_0^{k})\subseteq\im(\inc^k),$ for every $0\leq k\leq n-1.$ 
The proof now follows from Corollary \ref{inc-sym}.
\end{proof}


\subsection{Proof of Theorem \ref{defect-odd}}


\subsubsection{Odd dimension}


For every integer $l$ such 
that $0\leq 2l\leq n-1,$  a direct computation using (\ref{extra-even}) 
and Corollary \ref{beta-square} shows 

\begin{align}
\label{single-quadratic-odd}
\beta_{2l}(X^{[2]}_{\rm extra}(\RR))+\sum_{i=1}^r\beta_{2l}(\HH_i)=&\, 
\sum_{i+j=2l}\sum_{1\leq s<t\leq r}\beta_i(F_s)\beta_j(F_t) 
 +\sum_{i=1}^r\sum_{\substack{a+b=2l\\a<b}} \beta_a(F_i)\beta_b(F_i)\notag \\
&+\, \sum_{i=1}^r\frac12\beta_l(F_i)(\beta_l(F_i)-1) +\sum_{i=1}^r\sum_{a=0}^{l} \beta_a(F_i)\notag\\
=&\, \frac12\sum_{a+b=2l}\beta_a(X(\RR))\beta_b(X(\RR))  \notag\\ 
&\, +\sum_{a=0}^{l-1}\beta_a(X(\RR))+\frac12\beta_l(X(\RR)).
\end{align}
According to Lemma \ref{dimensions-H0},
$$
\beta_{2l}(\HH_0)=\sum_{i=2n-2l}^{2n}\beta_i(X)=\sum_{j=0}^{2l}\beta_j(X)=l+1,
$$
so that, from  (\ref{single-quadratic-odd}) and Proposition \ref{inclusion-restrictions}, we 
infer that (\ref{beta-2l-first}) can be written as follows:
\begin{align}
\label{beta-2l}
\beta_{2l}(X^{[2]}(\RR))=&\, (l+1)+ \frac12\sum_{i+j=2l}\beta_i(X(\RR))\beta_j(X(\RR))
+ \sum_{a=0}^{l-1}\beta_a(X(\RR))  
\notag\\
&\,  +\frac12\beta_l(X(\RR))
+ \sum_{i=0}^{2l-1}\beta_i(X(\RR))-\sum_{i=0}^{l}\beta_i(X(\RR))-\sum_{i=0}^{l-1}\beta_i(X(\RR))\notag\\
=&\, (l+1)+ \frac12\sum_{i+j=2l}\beta_i(X(\RR))\beta_j(X(\RR)) \notag\\
&\,+ \sum_{i=0}^{2l-1}\beta_i(X(\RR))-\frac12\beta_l(X(\RR))-\sum_{i=0}^{l-1}\beta_i(X(\RR)).
\end{align}
From Lemma \ref{3together} and formula (\ref{beta-2l}) we find:

\begin{align*}
\beta_{\rm even}(X^{[2]}(\RR))=&\,2\sum_{l=0}^{\frac{n-1}{2}}\beta_{2l}(X^{[2]}(\RR))\notag\\
=&\, 2\sum_{l=0}^{\frac{n-1}{2}}(l+1)
+\sum_{l=0}^{\frac{n-1}{2}}\sum_{i+j=2l}\beta_i(X(\RR))\beta_j(X(\RR)) \notag\\
&\,+ 2\sum_{l=1}^{\frac{n-1}{2}}\sum_{i=0}^{2l-1}\beta_i(X(\RR))
-\sum_{l=0}^{\frac{n-1}{2}}\beta_l(X(\RR))
-2\sum_{l=1}^{\frac{n-1}{2}}\sum_{i=0}^{l-1}\beta_i(X(\RR))\notag \\
=&\,\frac{(n+1)(n+3)}{4}+\frac{ \beta^2_*}{4}
+\frac{(n-2)\beta_*}{2}-2\sum_{l=1}^{\frac{n-1}{2}}\sum_{i=0}^{l-1}\beta_i(X(\RR)).
\end{align*}
Therefore, the deficiency of $X^{[2]}$ is given by: 
\begin{align*}
\defi(X^{[2]})=&\,  \frac12 \beta_*^2+ (n-2)\beta_*+2n+2-2\beta_{\rm even}(X^{[2]}(\RR))\notag\\
=&\,  \frac12 \beta_*^2+ (n-2)\beta_*+2n+2 \notag\\
&\, -\frac{(n+1)(n+3)}{2}-\frac{ \beta^2_*}{2}-(n-2)\beta_*
+4\sum_{l=1}^{\frac{n-1}{2}}\sum_{i=0}^{l-1}\beta_i(X(\RR))\notag\\
=&\,  4\left(\sum_{l=1}^{\frac{n-1}{2}}\sum_{i=0}^{l-1}\beta_i(X(\RR))-\frac{n^2-1}{8}\right),
\end{align*}
which concludes the proof of Theorem \ref{defect-odd} when the dimension of $X$ is odd.


\subsubsection{Even dimension.}


For every integer $l$ such 
that $1\leq l\leq \frac{n}{2},$  a direct computation using (\ref{extra-odd}) 
and Corollary \ref{beta-square} shows 

\begin{align}
\label{single-quadratic-even}
&\beta_{2l-1}(X^{[2]}_{\rm extra}(\RR))+ \sum_{i=1}^r\beta_{2l-1}(\HH_i)=\notag \\
&\sum_{i+j=2l-1}\sum_{1\leq s<t\leq r}\beta_i(F_s)\beta_j(F_t) 
+\sum_{i=1}^r\sum_{\substack{a+b=2l-1\\a<b}} \beta_a(F_i)\beta_b(F_i)+
 \sum_{i=1}^r\sum_{c=0}^{l-1}\beta_c(F_i)=\notag \\
&\sum_{\substack{a+b=2l-1\\a<b}}\beta_a(X(\RR))\beta_b(X(\RR))  
+ \sum_{c=0}^{l-1}\beta_c(X(\RR)).
\end{align}
Notice from Lemma \ref{dimensions-H0} and the Lefschetz theorem on hyperplane sections that 
$
\displaystyle 
\beta_{2l-1}(\HH_0)=l.
$
By  (\ref{single-quadratic-even}) and Proposition \ref{inclusion-restrictions}, we now
infer that (\ref{master-main-even}) can be written as follows:
\begin{align}
\label{beta-2l-1}
\beta_{2l-1}(X^{[2]}(\RR))=&\, l
+ \sum_{\substack{a+b=2l-1\\a<b}}\beta_a(X(\RR))\beta_b(X(\RR))  \notag \\
&+\, \sum_{i=0}^{2l-2}\beta_i(X(\RR))-\sum_{i=0}^{l-1}\beta_i(X(\RR)).
\end{align}
From Lemma \ref{3together-even} and formula (\ref{beta-2l-1}) we find:

\begin{align}
\label{betti-odd-even-dim}
\beta_{\rm odd}(X^{[2]}(\RR))=&\,2\sum_{l=1}^{\frac{n}{2}}\beta_{2l-1}(X^{[2]}(\RR))\notag\\
=&\, 2\sum_{l=1}^{\frac{n}{2}}l 
+ \sum_{l=1}^{\frac{n}{2}}\sum_{\substack{a+b=2l-1\\a<b}}\beta_a(X(\RR))\beta_b(X(\RR)) \notag\\
&\,+ 2\sum_{l=1}^{\frac{n}{2}}\sum_{i=0}^{2l-2}\beta_i(X(\RR))
-2\sum_{l=1}^{\frac{n}{2}}\sum_{i=0}^{l-1}\beta_i(X(\RR))\notag \\
=&\,\frac{n(n+2)}{4}+\beta_{\rm even}(X(\RR))\beta_{\rm odd}(X(\RR)) 
+ \frac{n}{2}\beta_*-\beta_{\rm odd}(X(\RR)) \notag\\
&-2\sum_{l=1}^{\frac{n}{2}}\sum_{i=0}^{l-1}\beta_i(X(\RR)).
\end{align}
Using (\ref{deficiency-square-even})  and (\ref{betti-odd-even-dim}), 
we compute now the deficiency of $X^{[2]}:$  
\begin{align*}
\defi(X^{[2]})=&\, n\beta_*+2\beta_{\rm even}(X(\RR))\beta_{\rm odd}(X(\RR))
-2\beta_{\rm odd}(X(\RR))-2\beta_{\rm odd}(X^{[2]}(\RR))\notag\\
=&\,  n\beta_*+2\beta_{\rm even}(X(\RR))\beta_{\rm odd}(X(\RR))
-2\beta_{\rm odd}(X(\RR)) \notag\\
&\, -\frac{n(n+2)}{2}-2\beta_{\rm even}(X(\RR))\beta_{\rm odd}(X(\RR)) 
- n\beta_*+2\beta_{\rm odd}(X(\RR)) \notag\\
&+4\sum_{l=1}^{\frac{n}{2}}\sum_{i=0}^{l-1}\beta_i(X(\RR)\notag\\
=&\,  4\left(\sum_{l=1}^{\frac{n}{2}}\sum_{i=0}^{l-1}\beta_i(X(\RR))-\frac{n(n+2)}{8}\right),
\end{align*}
which concludes the proof of Theorem \ref{defect-odd} in the case when 
the dimension of $X$ is even.
\qed


\subsection{Proof of Theorem \ref{highdegree2kfolds}}


We assume first that $X^{[2]}$ is maximal. Then, by Theorem \ref{converse}, 
$X$ is maximal, and thus applying Corollary \ref{odd-coro} we conclude that 
$\beta_i(X(\RR))=1=\beta_{2i}$ for every $i$ such that $0\le i\le k-1$ with 
$k=\frac12 \dim X.$ From the maximality of $X$ and Poincar\'e duality we find  
that $\beta_k(X(\RR))=\beta_{2k}.$ This implies
\begin{align*}
\chi(X(\RR))&= \sum_{i=0}^{k-1}(-1)^i\beta_i(X(\RR))+(-1)^k\beta_{k}(X(\RR))
+\sum_{i=k+1}^{2k}(-1)^i\beta_i(X(\RR))\\
&=\sum_{i=0}^{k-1}(-1)^i+(-1)^k\beta_{2k}+\sum_{i=k+1}^{2k}(-1)^i.
\end{align*}
On the other hand,  according to Lefschetz trace formula we have
$$
\chi(X(\RR))=\sum_{i=0}^{k-1}(-1)^i+\tr H^{k,k}(X(\CC))+\sum_{i=k+1}^{2k}(-1)^i.
$$
Taking the difference, we obtain
$$
\beta_{2k}=(-1)^k\tr H^{k,k}(X(\CC))
$$
implying $\beta_{2k}=h^{k,k}(X(\CC)).$
However, the inequality $h^{k,k}(X(\CC))<\beta_{2k}$ holds for any nonsingular 
projective complete intersection of dimension $2k\ge 2$ except for linear subspaces,  
quadrics, intersections of two quadrics, and cubic surfaces in 
$\PP^3$ (see \cite{rapoport}).

\smallskip

Conversely, notice that linear subspaces $X\subset \PP^N$ satisfy $\beta_i(X(\RR))=1$ 
for all $i$ such that $1\leq i\leq \dim \,X.$ In the case when $X$ is a maximal real nonsingular 
quadric or a maximal real nonsingular intersection of two quadrics, then $\beta_i(X(\RR))=1$ 
for every integer $i$ such that $0\leq i\leq \left[\frac{n}{2}\right]-1$ ({\it cf.} \cite[section 7.1]{krasnov} 
or  \cite[Proposition 1.10]{tomasini} and \cite[Theorem 4.9]{tomasini}). Likewise, if $X$ is a 
maximal real nonsingular cubic surface in $\PP^3,$ then $X$ is the blow-up of $\PP^2$ at 
six real points, and so $\beta_0(X(\RR))=1.$ Thus, in all the four cases the conditions in 
Corollary \ref{odd-coro} are fulfilled. Hence, in all the four cases the Hilbert square is 
maximal.
\qed

\begin{rmk} {\rm The proof given above shows the maximality of the Hilbert square for real linear spaces 
and maximal real quadrics of any dimension, not only of even-dimensional ones.}
\end{rmk}


\section{Maximality criteria and the proof of Theorems \ref{connected} and \ref{Pn}}


We start by proving first the following maximality criterium. 

\begin{prop}
\label{mu-criterium}
Let $X$ be a maximal real nonsingular  $n$-dimensional  algebraic variety 
with $\Tors_2H_*(X,\ZZ)=0.$ Then the defect of $X^{[2]}$ is given by
$$
\defi(X^{[2]})=2\left(\rank \mu_*-\frac{n}2\beta_*-\frac12\beta_{\rm odd}\right)
\footnote{It is not difficult to check that in the case of surfaces this formula for 
the Smith-Thom deficiency of $X^{[2]}$ concords with
that given in \cite[Remark 5.4]{loss-surfaces}.}.
$$
In particular, $X^{[2]}$ is maximal if and only if 
$
\displaystyle
\rank \mu_*=\frac{n}2\beta_*+\frac12\beta_{\rm odd}.
$
\end{prop}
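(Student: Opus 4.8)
The key is to express $\beta_*(X^{[2]}(\RR))$ in terms of $\rank\mu_*$ and then compare with $\beta_*(X^{[2]})$, which is controlled by Proposition \ref{calculus}(2) since $\Tors_2 H_*(X;\ZZ)=0$. First I would decompose $X^{[2]}(\RR) = X^{[2]}_{\rm main}(\RR)\sqcup X^{[2]}_{\rm extra}(\RR)$ as in Section \ref{cut-paste}. For the extra part, the K\"unneth formula gives $\beta_*(X^{[2]}_{\rm extra}(\RR)) = \sum_{1\le s<t\le r}\beta_*(F_s)\beta_*(F_t) = \tfrac12\big(\beta_*(X(\RR))^2 - \sum_i \beta_*(F_i)^2\big)$. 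For the main part, I would run the Mayer-Vietoris sequence for the decomposition $X^{[2]}_{\rm main}(\RR) = \HH_0\cup\big(\sqcup_{i=1}^r\HH_i\big)$ glued along $E(\RR)$, whose associated short exact sequences give
\begin{equation*}
\beta_*(X^{[2]}_{\rm main}(\RR)) = \beta_*(\HH_0) + \beta_*(\HH) + \beta_*(E(\RR)) - 2\rank\mu_*,
\end{equation*}
summing the Mayer-Vietoris identity $\beta_k(X^{[2]}_{\rm main}) = \Dim\coker\mu_k + \Dim\Ker\mu_{k-1}$ over all $k$ (the rank terms $\rank\mu_k$ appear twice in total over the telescoping sum).

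\textbf{Assembling the pieces.} Now I would substitute the known values: $\beta_*(\HH_0) = \tfrac{n}{2}\beta_*$ from Lemma \ref{dimensions-H0}(2); $\beta_*(\HH) = \tfrac12\sum_{i=1}^r\beta_*^2(F_i) + \tfrac{n-1}{2}\beta_*$ from Corollary \ref{beta-square}(4), using maximality of $X$; and $\beta_*(E(\RR)) = n\,\beta_*(X(\RR)) = n\beta_*$, since $E(\RR) = \PP_\RR(T^*X(\RR))$ is a $\PP^{n-1}$-bundle over the $M$-manifold $X(\RR)$ and $X$ is maximal. Adding the main and extra contributions, the $\pm\tfrac12\sum_i\beta_*^2(F_i)$ terms cancel and the $\pm\tfrac12\beta_*(X(\RR))^2 = \pm\tfrac12\beta_*^2$ terms combine, leaving
\begin{equation*}
\beta_*(X^{[2]}(\RR)) = \frac12\beta_*^2 + \Big(\tfrac{n}{2} + \tfrac{n-1}{2} + n\Big)\beta_* - 2\rank\mu_* = \frac12\beta_*^2 + \Big(2n-\tfrac12\Big)\beta_* - 2\rank\mu_*.
\end{equation*}
Wait — I should track this more carefully; the coefficient of $\beta_*$ assembles from $\tfrac{n}{2}$ (from $\HH_0$), $\tfrac{n-1}{2}$ (from $\HH$), $n$ (from $E(\RR)$), and the extra part contributes only to the quadratic term after the cancellation, but note $X^{[2]}_{\rm extra}$ contributes $+\tfrac12\beta_*^2 - \tfrac12\sum\beta_*^2(F_i)$ while $\HH$ contributes $+\tfrac12\sum\beta_*^2(F_i)$, so these combine cleanly. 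The precise bookkeeping is exactly the step I expect to be the main (though routine) obstacle: making sure the constant and linear-in-$\beta_*$ terms match, and in particular verifying that the quadratic terms coincide with the $\tfrac12\beta_*(\beta_*-1)$ in \eqref{totaro-notorsion}.

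\textbf{Conclusion.} Finally I would combine with Proposition \ref{calculus}(2), $\beta_*(X^{[2]}) = \tfrac12\beta_*(\beta_*-1) + n\beta_* - \beta_{\rm odd}$, and subtract:
\begin{equation*}
\defi(X^{[2]}) = \beta_*(X^{[2]}) - \beta_*(X^{[2]}(\RR)) = 2\rank\mu_* - n\beta_* - \beta_{\rm odd}\,,
\end{equation*}
after the quadratic terms cancel; rewriting gives $\defi(X^{[2]}) = 2\big(\rank\mu_* - \tfrac{n}{2}\beta_* - \tfrac12\beta_{\rm odd}\big)$. The ``in particular'' statement is then immediate, since $\defi(X^{[2]}) = 0$ is by definition equivalent to maximality of $X^{[2]}$. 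One point requiring a little care throughout: the Mayer-Vietoris identity as written assumes $X(\RR)\neq\emptyset$; if $X(\RR)=\emptyset$ then $\beta_{\rm odd}=0$ (maximality of $X$ forces $\beta_*(X(\RR))=\beta_*$, impossible unless... ) — actually one should dispatch this degenerate case separately or note it cannot occur under the hypotheses, e.g. invoking Proposition \ref{nonemptyness} is not available here since we do not assume $X^{[2]}$ maximal, so instead I would simply include the empty case in the statement's scope by checking both sides directly, as in the computation in the proof of Proposition \ref{nonemptyness}.
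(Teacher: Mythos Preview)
Your approach is essentially identical to the paper's: the same Mayer-Vietoris computation for $\beta_*(X^{[2]}_{\rm main}(\RR))$, the same K\"unneth computation for the extra part, the same inputs from Lemma \ref{dimensions-H0}(2), Corollary \ref{beta-square}(4), Leray--Hirsch for $E(\RR)$, and Proposition \ref{calculus}(2), combined in the same order. Your bookkeeping is correct and matches the paper's intermediate expression $\beta_*(X^{[2]}(\RR))=\tfrac12\beta_*(\beta_*-1)+2n\beta_*-2\rank\mu_*$. The only superfluous worry is the case $X(\RR)=\emptyset$: maximality of $X$ already forces $\beta_*(X(\RR))=\beta_*\ge 1$, so this case cannot arise and the paper simply does not mention it.
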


\begin{proof}
The Mayer-Vietoris sequence
$$
\cdots  \ra H_k(E(\RR))\xrightarrow{\mu_k}\bigoplus_{i=0}^r H_k(\HH_i)
\ra H_k(X^{[2]}_{\text{main}}(\RR))\ra H_{k-1}(E(\RR))\xrightarrow{\mu_{k-1}} \dots
$$
gives rise to a short exact sequence
$$
0\ra\coker (\mu_k)\ra H_k(X^{[2]}_{\text{main}}(\RR))\ra \Ker(\mu_{k-1})\ra 0,
$$
and so 
$$
\beta_k(X^{[2]}_{\text{main}}(\RR))= \Dim\coker (\mu_k)+\Dim\Ker(\mu_{k-1}).
$$
As a consequence, we have 
\begin{align*}
\beta_*(X^{[2]}_{\text{main}}(\RR))=&\, \sum_{k=0}^{2n} \beta_k(X^{[2]}_{\text{main}}(\RR))\\
=&\, \sum_{k=0}^{2n} \Dim \coker(\mu_k)+\Dim\Ker(\mu_{k-1})\\
=&\, \sum_{i=0}^r\beta_*(\HH_i)+\beta_*(E(\RR))- 2\rank (\mu_*).
\end{align*}
Furthermore, by Leray-Hirsch, we have 
$\beta_*(E(\RR))=\beta_*(\PP^{n-1}(\RR))\beta_*(X(\RR)).$ In our case, 
$X$ is maximal, and so 
\begin{equation}
\label{e=nb}
\beta_*(E(\RR))=n \beta_*.
\end{equation}
Using now Lemma \ref{dimensions-H0} and Corollary \ref{beta-square}, we find 
\begin{align}
\label{square-main}
\beta_*(X^{[2]}_{\text{main}}(\RR))=&\, \sum_{i=0}^r\beta_*(\HH_i)
+\beta_*(E(\RR))- 2\rank (\mu_*) \notag \\ 
=&\, \frac12\sum_{i=1}^r\beta_*^2(F_i)-\frac{1}{2}\beta_*
+2n\beta_*-2\rank( \mu_*). 
\end{align}
Notice now that by the K\"unneth formula we get
\begin{equation}
\label{square-extra}
\beta_{*}(X^{[2]}_{\text{extra}}(\RR))= \sum_{1\leq i<j\leq r}\beta_{*}(F_i\times F_j)=
\frac12(\beta_*^2-\sum_{1\le i\le r}\beta_*^2(F_i)).
\end{equation}
From (\ref{square-main}) and (\ref{square-extra}), we find 
\begin{align}
\label{square-plain}
\beta_*(X^{[2]}(\RR))=&\, \beta_{*}(X^{[2]}_{\text{main}}(\RR))
+\beta_{*}(X^{[2]}_{\text{extra}}(\RR)) \notag \\
=&\,  \frac12\sum_{i=1}^r\beta_*^2(F_i)-\frac{1}{2}\beta_*+2n\beta_*-2\rank (\mu_*)
+\frac12\beta_*^2-\frac12\sum_{1\le i\le r}\beta_*^2(F_i)\notag \\
=&\, \frac12\beta_*(\beta_*-1)+2n\beta_*-2\rank (\mu_*).
\end{align}
Invoking now the second item of Proposition \ref{calculus}, 
from (\ref{square-plain}) we find that the maximality defect of $X^{[2]}$ is
 \begin{align*}
 \defi(X^{[2]})=&\, \beta_*(X^{[2]})-\beta_*(X^{[2]}(\RR))\\
 =&\, 2\left(\rank(\mu_*)-\frac{n}2\beta_*-\frac12\beta_{\rm odd}\right).
\end{align*}
\end{proof}

Proposition \ref{mu-criterium} yields an effective criterion for the maximality 
of Hilbert squares in the absence of cohomology in odd degree. Before we 
state this criterion, we recall the following well known observation 
which goes back to Rokhlin and Thom.

\medskip

Let $M$ be a smooth compact $n$-dimensional manifold with boundary 
({\it neither $\partial M$ or $M$ is assumed connected}). Denote by 
$\mathfrak i: \partial M\to M$ the inclusion map, and by 
$\mathfrak i_k$ and ${\mathfrak i}_*$ the induced maps, 
$H_k(\partial M)\to H_k(M)$ and $H_*(\partial M)\to H_*(M),$ 
respectively.

\begin{lemma}
\label{boundary}
$\Dim \Ker (\mathfrak i_*) =\rank (\mathfrak i_*)=\frac12 \Dim \, H_*(\partial M)$.
\end{lemma}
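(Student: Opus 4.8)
The plan is to exploit Poincaré–Lefschetz duality for the compact manifold $M$ with boundary together with the exactness of the long homology/cohomology sequence of the pair $(M,\partial M)$. First I would observe that the statement really only involves $\mathbb{F}_2$-coefficients (as everywhere in this paper), so all the duality isomorphisms are available without orientability hypotheses. Write $n=\dim M$ and $n-1=\dim\partial M$. The key input is the commutative ladder relating the homology exact sequence of $(M,\partial M)$ to the cohomology exact sequence of $M$ via the duality isomorphisms $H_k(M,\partial M)\cong H^{n-k}(M)$, $H_k(M)\cong H^{n-k}(M,\partial M)$, and $H_k(\partial M)\cong H^{n-1-k}(\partial M)$; under these identifications the map $\mathfrak i_k:H_k(\partial M)\to H_k(M)$ is carried to (a shift of) the restriction map $\mathfrak i^{\,n-1-k}:H^{n-1-k}(M)\to H^{n-1-k}(\partial M)$, up to transposition. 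Consequently $\rank(\mathfrak i_k)=\rank(\mathfrak i^{\,n-1-k})=\rank(\mathfrak i_{n-1-k})$, i.e. the ranks of the boundary inclusion in complementary degrees agree.

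Next I would run the standard rank-counting argument on the exact sequence
$$
\cdots\to H_{k+1}(M,\partial M)\xrightarrow{\partial} H_k(\partial M)\xrightarrow{\mathfrak i_k} H_k(M)\to H_k(M,\partial M)\to\cdots,
$$
to get $\Dim\Ker(\mathfrak i_k)=\Dim\,\im(\partial)$ at the relevant spot, and, dualizing, identify $\Dim\,\im(\partial)$ with $\Dim\coker$ of the complementary restriction map. Summing over all $k$ and using the complementary-degree symmetry from the first step, the total kernel dimension $\Dim\Ker(\mathfrak i_*)=\sum_k\Dim\Ker(\mathfrak i_k)$ is forced to equal the total rank $\rank(\mathfrak i_*)=\sum_k\rank(\mathfrak i_k)$: pairing degree $k$ with degree $n-1-k$ in the exact sequence shows that the kernel in one degree has the same dimension as the cokernel-type term in the complementary degree, which is in turn $\Dim\,H_k(\partial M)-\rank(\mathfrak i_k)$, and these match up after summation. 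Since $\Dim\Ker(\mathfrak i_*)+\rank(\mathfrak i_*)=\Dim\,H_*(\partial M)$ by rank–nullity applied degreewise and summed, the two equal quantities are each $\tfrac12\Dim\,H_*(\partial M)$, which is the assertion.

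The only subtlety — and the step I expect to need the most care — is making the duality square genuinely commute (up to the transpose, which is harmless for ranks and for $\mathbb F_2$-dimensions) and handling the degenerate degrees $k=n$ and $k=-1$ where some groups vanish; there one checks directly that $H_n(\partial M)=0$ contributes nothing and that the long exact sequence closes up as required. I would phrase the argument so that no connectedness of $M$ or $\partial M$ is used, since the lemma is later applied with $M=\mathbb H$ a disjoint union of pieces; additivity over connected components is immediate because homology, the exact sequences, and duality all split over components. Finally, I would remark that this is exactly the Rokhlin–Thom observation that "a manifold bounds half of its boundary's homology," so the proof can be kept short by citing the mechanism and only spelling out the $\mathbb F_2$ bookkeeping.
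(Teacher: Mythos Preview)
Your overall strategy---Poincar\'e--Lefschetz duality combined with the long exact sequence of the pair---is sound and does lead to a proof, but the specific identity you assert in the first paragraph is false. You claim that under the duality ladder $\mathfrak i_k$ corresponds to $\mathfrak i^{\,n-1-k}$, hence $\rank(\mathfrak i_k)=\rank(\mathfrak i_{n-1-k})$. Take $M=D^2$: then $\rank(\mathfrak i_0)=1$ (the boundary circle maps onto $H_0$) while $\rank(\mathfrak i_1)=0$ (the disk has no $H_1$), yet $n-1-0=1$. The mistake is in which map corresponds to which: under Poincar\'e--Lefschetz duality the inclusion $\mathfrak i_k:H_k(\partial M)\to H_k(M)$ is carried to the \emph{connecting homomorphism} $\delta:H^{n-1-k}(\partial M)\to H^{n-k}(M,\partial M)$, not to the restriction $\mathfrak i^{\,n-1-k}$. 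It is the boundary map $\partial_{k+1}:H_{k+1}(M,\partial M)\to H_k(\partial M)$ that is dual to $\mathfrak i^{\,n-1-k}$.

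Once you make this correction, the argument runs cleanly: $\Dim\Ker(\mathfrak i_k)=\Dim\im(\partial_{k+1})=\rank(\mathfrak i^{\,n-1-k})=\rank(\mathfrak i_{n-1-k})$, and summing over $k$ gives $\Dim\Ker(\mathfrak i_*)=\rank(\mathfrak i_*)$ directly, whence both equal $\tfrac12\Dim H_*(\partial M)$ by rank--nullity. This is essentially the paper's argument phrased through the exact sequence; the paper instead packages the same duality by saying that $\Ker(\mathfrak i_k)$ is the orthogonal complement of $\Ker(\mathfrak i_{n-1-k})$ under the intersection form on $\partial M$, giving $\Dim\Ker(\mathfrak i_k)+\Dim\Ker(\mathfrak i_{n-1-k})=\beta_k(\partial M)$ and summing. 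The two formulations are equivalent, but you should fix the misidentification before the rest of your write-up will hold together.
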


\begin{proof} It suffices to show $\Dim \Ker (\mathfrak i_*) 
=\frac12 \Dim \, H_*(\partial M).$ The 
Poincar\'e-Lefschetz duality implies that for every $0\le k\le n-1$ the 
$k$-dimensional part of $\Ker (\mathfrak i_k)$ is the orthogonal complement of 
$\Ker (\mathfrak i_{n-1-k}).$
Herefrom, 
$$
\Dim \Ker (\mathfrak i_k)+\Dim \Ker (\mathfrak i_{n-1-k})= 
\beta_k(\partial M)=\beta_{n-1-k}(\partial M).
$$ 
The result follows by summing over $k$ from $0$ to $n-1$.
\end{proof}

\begin{prop}
\label{coh-criterium}
Let $X$ be a maximal $n$-dimensional real nonsingular projective variety with
$H_{\rm odd}(X)=0$. Then, $X^{[2]}$ is maximal if and only if one of the following 
two equivalent conditions holds
\begin{itemize}
\item[ 1)] $\displaystyle \Ker(\inc_{0*})=\Ker(\inc_*),$
\item[ 2)]  $\displaystyle \im (\inc_{0}^*)=\im (\inc_{}^*).$
\end{itemize}
\end{prop}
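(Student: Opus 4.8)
The plan is to apply the criterion of Proposition \ref{mu-criterium}, which says that $X^{[2]}$ is maximal if and only if $\rank\mu_* = \tfrac{n}{2}\beta_* + \tfrac12\beta_{\rm odd}$, and to translate this single numerical equality into the two conditions in the statement. Since $H_{\rm odd}(X)=0$, we have $\beta_{\rm odd}=0$ and $\beta_*=\beta_{\rm even}$, so the target becomes $\rank\mu_* = \tfrac{n}{2}\beta_*$. First I would record the general inequality $\rank\mu_* \le \tfrac{n}{2}\beta_*$ that holds unconditionally (under maximality of $X$), so that maximality of $X^{[2]}$ is equivalent to this inequality being an equality. The natural way to get this bound is to split $\mu = (\inc_0,\inc)$ and estimate $\rank\mu_*$ from above in terms of $\rank\inc_{0*}$ (equivalently $\rank\inc_0^*$) and $\rank\inc_*$, using that $\im\inc_{0}^*\subseteq\im\inc^*$ whenever that containment can be arranged — but in the present generality (no complete intersection assumption) one cannot invoke Lemma \ref{basis}, so instead I would argue directly via the Mayer–Vietoris map.

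Here is the key computation I would carry out. By definition $\rank\mu_* = \Dim H_*(E(\RR)) - \Dim\Ker\mu_*$. An element of $\Ker\mu_*$ is a class $x\in H_*(E(\RR))$ with $\inc_{0*}(x)=0$ and $\inc_*(x)=0$, i.e. $\Ker\mu_* = \Ker\inc_{0*}\cap\Ker\inc_*$. On the other hand, applying Lemma \ref{boundary} to the manifold-with-boundary $\HH_0$ (with $\partial\HH_0 = E(\RR)$) gives $\Dim\Ker\inc_{0*} = \tfrac12\Dim H_*(E(\RR))$, and applying it to $\HH=\sqcup_{i=1}^r\HH_i$ (with $\partial\HH = E(\RR)$) gives $\Dim\Ker\inc_{*} = \tfrac12\Dim H_*(E(\RR))$. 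Using $\beta_*(E(\RR)) = n\beta_*$ from \eqref{e=nb}, both kernels have dimension $\tfrac{n}{2}\beta_*$. Now
\begin{align*}
\Dim\Ker\mu_* &= \Dim\bigl(\Ker\inc_{0*}\cap\Ker\inc_*\bigr) \\
&= \Dim\Ker\inc_{0*} + \Dim\Ker\inc_* - \Dim\bigl(\Ker\inc_{0*}+\Ker\inc_*\bigr) \\
&\ge \tfrac{n}{2}\beta_* + \tfrac{n}{2}\beta_* - \Dim H_*(E(\RR)) = n\beta_* - n\beta_* = 0,
\end{align*}
but more usefully, $\Dim\Ker\mu_* = \tfrac{n}{2}\beta_*$ if and only if $\Ker\inc_{0*}+\Ker\inc_* = H_*(E(\RR))$, and $\Dim\Ker\mu_* \le \tfrac{n}{2}\beta_*$ always (since the sum of the two kernels has dimension at most $n\beta_*$). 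Consequently $\rank\mu_* = n\beta_* - \Dim\Ker\mu_* \ge \tfrac{n}{2}\beta_*$ always, and equality holds if and only if $\Dim\Ker\mu_* = \tfrac{n}{2}\beta_*$, i.e. if and only if $\Ker\inc_{0*}+\Ker\inc_* = H_*(E(\RR))$. Since both summands have dimension exactly half of the total, this is equivalent to $\Ker\inc_{0*}\cap\Ker\inc_* = 0$.

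It remains to identify this last condition with (1) and (2). By Lemma \ref{boundary} applied to $\HH_0$, the subspace $\Ker\inc_{0*}$ is its own ``half'' and, by Poincar\'e–Lefschetz duality inside the proof of that lemma, $\Ker\inc_{0*}$ is the annihilator of $\im\inc_0^*$ under the Poincar\'e pairing on $E(\RR)$; similarly $\Ker\inc_*$ is the annihilator of $\im\inc^*$. Hence $\Ker\inc_{0*}\cap\Ker\inc_* = 0$ is equivalent, by nondegeneracy of the pairing, to $\im\inc_0^* + \im\inc^* = H^*(E(\RR))$. But each of $\im\inc_0^*$ and $\im\inc^*$ already has dimension $\tfrac12\Dim H^*(E(\RR)) = \tfrac{n}{2}\beta_*$ (again Lemma \ref{boundary}), so their sum fills $H^*(E(\RR))$ if and only if they coincide, which is condition (2); dually, $\Ker\inc_{0*}\cap\Ker\inc_* = 0$ together with the equal dimensions forces $\Ker\inc_{0*}=\Ker\inc_*$, which is condition (1), and the two are visibly equivalent to each other. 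I expect the main obstacle to be bookkeeping: being careful that the two applications of Lemma \ref{boundary} are to $\HH_0$ and to $\sqcup_{i=1}^r\HH_i$ separately (each having $E(\RR)$ as boundary), that the duality statement ``$\Ker\inc_{0*} = (\im\inc_0^*)^\perp$'' is extracted correctly from the proof of Lemma \ref{boundary}, and that the $\beta_{\rm odd}=0$ hypothesis is what makes the target value exactly $\tfrac{n}{2}\beta_*$ so that the two half-dimensional subspaces match up.
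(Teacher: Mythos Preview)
Your approach is the same as the paper's, and the overall architecture is correct: apply Proposition~\ref{mu-criterium}, reduce to $\rank\mu_*=\tfrac{n}{2}\beta_*$ (equivalently $\Dim\Ker\mu_*=\tfrac{n}{2}\beta_*$), and compare the two half-dimensional kernels via Lemma~\ref{boundary}. One small omission: you should note that $H_{\rm odd}(X)=0$ forces $\Tors_2 H_*(X;\ZZ)=0$ by universal coefficients, so that Proposition~\ref{mu-criterium} actually applies.

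However, the linear algebra in the middle is garbled by two errors that happen to cancel. From your own inclusion--exclusion display,
\[
\Dim\Ker\mu_* \;=\; n\beta_* - \Dim\bigl(\Ker\inc_{0*}+\Ker\inc_*\bigr),
\]
so $\Dim\Ker\mu_*=\tfrac{n}{2}\beta_*$ is equivalent to $\Dim(\Ker\inc_{0*}+\Ker\inc_*)=\tfrac{n}{2}\beta_*$, i.e.\ to $\Ker\inc_{0*}=\Ker\inc_*$ (each kernel already has this dimension). You instead assert it is equivalent to $\Ker\inc_{0*}+\Ker\inc_*=H_*(E(\RR))$, which corresponds to $\Dim\Ker\mu_*=0$, the opposite extreme. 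Later you commit the same confusion in reverse: for two subspaces $A,B$ of a vector space $V$ with $\Dim A=\Dim B=\tfrac12\Dim V$, the condition $A+B=V$ means $A\cap B=0$ (they are \emph{complementary}), not $A=B$; and ``$A\cap B=0$ together with equal dimensions forces $A=B$'' is likewise false. These two mistakes undo each other, so you land on the right statement, but the argument as written is not valid. The fix is immediate: from $\Dim\Ker\mu_*=\tfrac{n}{2}\beta_*$ conclude directly that the intersection $\Ker\inc_{0*}\cap\Ker\inc_*$ coincides with each of the two half-dimensional kernels, giving condition~(1); then pass to condition~(2) by the annihilator duality $\Ker\inc_{0*}=(\im\inc_0^*)^{\perp}$, $\Ker\inc_{*}=(\im\inc^*)^{\perp}$, exactly as you indicate.
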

\begin{proof} By universal coefficient theorem, the assumption $H_{\rm odd}(X)=0$ 
implies $\Tors_2 H_*(X,\ZZ)=0$.
Thus, Proposition \ref{mu-criterium} applies and shows 
that $X^{[2]}$ is maximal if and only if $\displaystyle \rank(\mu_*)=\frac{n}{2}\beta_*.$  
As $\Dim \, H_*(E(\RR))=n\beta_*,$ this 
is equivalent to $\displaystyle \Dim\Ker(\mu_*)=\frac{n}{2}\beta_*.$ 
Notice now that $\Ker(\mu_*)=\Ker(\inc_{0*})\cap \Ker(\inc_{*})$
and, by Lemma \ref{boundary},
$\displaystyle \Dim \Ker(\inc_{0*})= \Dim \Ker(\inc_{*})=\frac{n}{2}\beta_*.$  
Therefore, the condition $\displaystyle \Dim\Ker(\mu_*)=\frac{n}{2}\beta_*$ is equivalent to 
$\Ker(\inc_{0*})=\Ker(\inc_{*}).$ 
By duality, this is equivalent to $\im (\inc_{0}^*)=\im (\inc_{}^*).$
\end{proof}

\begin{proof}[Proof of Theorem \ref{connected}]
Since $X^{[2]}$ is maximal, by Proposition \ref{coh-criterium}, we have 
$\im (\inc_{0}^*)=\im (\inc_{}^*).$ In particular, we find  $\im (\inc_{0}^k)=\im (\inc_{}^k),$ 
for every $k=\{0,\dots, 2n-1\}.$ If $k<n,$ using Corollary \ref{inc-sym} and Lemma 
\ref{inc-zero}, we find 
$$
\sum_{\ell=0}^{[k/2]} \beta_\ell(X(\RR))=\sum_{\ell=0}^k \beta_\ell.
$$
By induction, since $\beta_{2i+1}=0$ for all $i\geq 1,$ we find that
$$
\beta_k(X(\RR))= \beta_{2k} \quad{\text{for all}}\quad k<\frac{n}{2}.
$$ 
By Poincar\'e duality, this extends to $k>n/2.$ Finally, by Theorem \ref{converse}, 
$X$ is maximal, and so $\displaystyle\sum_{k=0}^n\beta_k(X(\RR))=\sum_{i=0}^n\beta_{2k},$ 
which implies $\beta_{k}(X(\RR))=\beta_{2k}$ if $n=2k.$
\end{proof}

\begin{proof}[Proof of Theorem \ref{Pn}] 
By Proposition \ref{coh-criterium} and Lemma \ref{boundary}, 
it suffices to show $\im (\inc_{0}^*)\supseteq\im (\inc_{}^*).$ 

\smallskip

Let $\{Y_\alpha\}_{\alpha\in I}$ be a finite collection of $\Conj$-invariant, smooth 
submanifolds of $X,$  such that the group $H^*(X(\RR))$ is generated by the Poincar\'e 
dual of the fundamental classes of $Y_\alpha(\RR), \alpha\in I.$ Without loss of generality, 
we can assume $Y_\alpha(\RR)\neq \emptyset$ for every $\alpha\in I.$ 
By Theorem \ref{totaro-basis}, $\im (\inc_{}^*)$ is generated by the elements  
$\inc^*(b^j[Y_\alpha^{(2)}(\RR)\setminus  \Delta Y_{\alpha}]),\,\alpha\in I.$ 
It remains to notice that (\ref{equal-restrictions}) in Section  \ref{rank-restrictions} 
implies that  for every $\alpha \in I$ and $j\geq 0$
$$
\inc^*(b^j[Y_\alpha(\RR)^{(2)}\setminus \Delta Y(\RR)]) 
= \gamma^j\kappa^*(\Upsilon_{Y_\alpha})= \inc_0^*(b_0^j\theta_{Y_\alpha}).
$$ 
\end{proof}


\bibliographystyle{alpha}

\end{document}